\newtheorem{thm}{Theorem}[section]
\newtheorem{lem}[thm]{Lemma}
\newtheorem{prop}[thm]{Proposition}
\newtheorem{cor}[thm]{Corollary}
\theoremstyle{definition}
\newtheorem{dfn}[thm]{Definition}
\newtheorem{ex}[thm]{Example}
\theoremstyle{remark}
\newtheorem{remark}[thm]{Remark}
\newcommand{\CA}{{\mathcal{A}}}
\newcommand{\CD}{{\mathcal{D}}}
\newcommand{\CF}{{\mathcal{F}}}
\newcommand{\CH}{{\mathcal{H}}}
\newcommand{\CS}{{\mathcal{S}}}
\newcommand{\CI}{{\mathcal{I}}}
\newcommand{\CJ}{{\mathcal{J}}}
\newcommand{\CK}{{\mathcal{K}}}
\newcommand{\CL}{{\mathcal{L}}}
\newcommand{\CT}{{\mathcal{T}}}
\newcommand{\CB}{{\mathcal{B}}}
\newcommand{\CO}{{\mathcal{O}}}
\newcommand{\CR}{{\mathcal{R}}}
\newcommand{\CX}{{\mathcal{X}}}
\newcommand{\af}{\alpha}
\newcommand{\bt}{\beta}
\newcommand{\gm}{\gamma}
\newcommand{\ld}{\lambda}
\newcommand{\C}{{\mathbb{C}}}
\newcommand{\N}{{\mathbb{N}}}
\newcommand{\T}{{\mathbb{T}}}
\begin{document}


\title[Gauge-invariant ideals of $C^*$-algebras of Boolean dynamical
systems]
{Gauge-invariant ideals of $C^*$-algebras of Boolean dynamical
systems}

\author[T. M. Carlsen]{Toke Meier Carlsen}
\address{
Department of Sciences and Technology \\
University of Faroe Islands, N\'oat\'un 3, FO-100 T\'orshavn
\\
Faroe Islands} \email{toke.carlsen\-@\-gmail.\-com }

\author[E. J. Kang]{Eun Ji Kang$^{\dagger}$}
\thanks{Research partially supported by NRF-2017R1D1A1B03030540$^{\dagger}$}
\address{
Research Institute of Mathematics, Seoul National University, Seoul 08826, 
Korea} \email{kkang33\-@\-snu.\-ac.\-kr }

\subjclass[2000]{37B40, 46L05, 46L55}

\keywords{Relative Cuntz-Pimsner algebras, Cuntz-Pimsner algebras, $C^*$-algebras of Boolean dynamical systems, labelled graph $C^*$-algebras, gauge-invariant uniqueness theorem, gauge-invariant ideals.  }

\subjclass[2000]{46L05, 46L55}

\begin{abstract}

We enlarge the class of $C^*$-algebras of Boolean dynamical systems in
order to include all weakly left-resolving normal labelled space
$C^*$-algebras in it. We prove a gauge-invariant uniqueness theorem
and classify all gauge-invariant ideals of these  $C^*$-algebras of generalized Boolean
dynamical systems and describe the corresponding
quotients as $C^*$-algebras of relative generalized Boolean dynamical
systems.

\end{abstract}

\maketitle

\setcounter{equation}{0}

\section{Introduction}

Inspired by the $C^*$-algebra of labelled graphs introduced in
\cite{BP1}, the class of $C^*$-algebras of Boolean dynamical systems
for which each action has compact range and closed domain was
introduced in \cite{COP}. In the setting of \cite{COP}, the class of
$C^*$-algebras of Boolean dynamical systems contains many labelled
graph $C^*$-algebras, but not all of them.

In this paper, we enlarge the class of $C^*$-algebras of Boolean
dynamical systems so that it contains all weakly left-resolving normal
labelled space $C^*$-algebras. As a result, this class of
$C^*$-algebras contains all graph $C^*$-algebras, all ultragraph
$C^*$-algebras, $C^*$-algebras of shift spaces and homeomorphism
$C^*$-algebras over 0-dimensional compact spaces, among others.

This is done by considering pairs consisting of a Boolean dynamical
system $(\CB,\CL,\theta)$ and a family $(\CI_\alpha)_{\alpha\in\CL}$
of ideals in $\CB$ such that $\theta_\alpha(\CB)\subseteq\CI_\alpha$
for each $\alpha$, and associate a universal $C^*$-algebra
$C^*(\CB,\CL,\theta,\CI_\alpha)$ to each such pair. We call such a
pair $(\CB,\CL,\theta,\CI_\af)$ a \emph{generalized Boolean dynamical
system}.

If $(\CB,\CL,\theta)$ is a Boolean dynamical system with compact range
$\CR_\alpha$ and closed domain as in \cite{COP} and we let
$\CI_\alpha=\{B\in\CB:B\subseteq \CR_\alpha\}$ for each $\alpha$, then
$C^*(\CB,\CL,\theta,\CI_\af)$ is canonically isomorphic to
$C^*(\CB,\CL,\theta)$. If $(E,\CL,\CB)$ is a weakly left-resolving
normal labelled space as in \cite{BaCaPa2017} where $(E,\CL)$ is a
labelled graph over $\CA$, and $C^*(E,\CL,\CB)$ is the $C^*$-algebra
associated with $(E,\CL,\CB)$ in \cite[Definition 2.5]{BaCaPa2017},
then $C^*(\CB,\CA,\theta,\CI_{r(\alpha)})$ is canonically isomorphic
to $C^*(E,\CL,\CB)$ where $\theta$ is the action of $\CA$ on $\CB$
given by $\theta_\alpha(A)=r(A,\alpha)$ and
$\CI_{r(\alpha)}=\{B\in\CB:B\subseteq r(\alpha)\}$.

The second goal of the paper is to give a description of the
gauge-invariant ideals of our $C^*$-algebras and thus generalize the
description of the gauge-invariant ideals of $C^*$-algebras of
set-finite, receiver set-finite and weakly left-resolving labelled
spaces given in \cite{JKP} and the description of the gauge-invariant
ideals of $C^*$-algebras of locally finite Boolean dynamical systems
given in \cite{COP}.

Working with gauge-invariant ideals of generalized Boolean dynamical
system, we found it convenient to use certain extensions of
$C^*$-algebras of generalized Boolean dynamical systems. Such an
extension is constructed from a generalized Boolean dynamical system
$(\CB,\CL,\theta,\CI_\af)$ toghether with an ideal $\CJ$ of $\CB_{reg}$.
We call such a system  a \emph{relative generalized Boolean
dynamical system}, and we associate a universal $C^*$-algebra
$C^*(\CB,\CL,\theta,\CI_\alpha;\CJ)$ to it. These $C^*$-algebras of
relative generalized Boolean dynamical systems are generalizations of
relative graph $C^*$-algebras introduced by Muhly and Tomforde in
\cite{MT2004}. If $\CJ=\CB_{reg}$, then
$C^*(\CB,\CL,\theta,\CI_\alpha;\CJ)=C^*(\CB,\CL,\theta,\CI_\alpha)$.

By imitating a construction in \cite{BaCaPa2017}, we show that if
$(\CB,\CL,\theta,\CI_\alpha;\CJ)$ is a relative generalized Boolean
dynamical system, then $C^*(\CB,\CL,\theta,\CI_\alpha;\CJ)$ can be
constructed as relative Cuntz--Pimsner algebra, and then use Katsura's
gauge-invariant uniqueness theorem for relative Cuntz--Pimsner
algebras \cite[Corollary 11.8]{Ka2007} to obtain a gauge-invariant
uniqueness theorem for $C^*(\CB,\CL,\theta,\CI_\alpha;\CJ)$. We then
use this to show that there is a one-to-one correspondences between
gauge-invariant ideals of $C^*(\CB,\CL,\theta,\CI_\alpha;\CJ)$ and
pairs $(\CH,\CS)$ where $\CH$ is a hereditary $\CJ$-saturated ideal of
$\CB$ and $\CS$ is an ideal of $\{A\in\CB:
\theta_\alpha(A)\in\CH\text{ for all but finitely many }\alpha\}$ such
that $\CH\cup\CJ\subseteq\CS$. We show in addition that the quotient
by the ideal corresponding to $(\CH,\CS)$ is isomorphic to the
$C^*$-algebra of relative generalized Boolean dynamical system that
can be constructed from $(\CB,\CL,\theta,\CI_\alpha;\CJ)$ and
$(\CH,\CS)$.

We also use a construction from \cite{Ka2007} to show that the
$C^*$-algebra $C^*(\CB,\CL,\theta,\CI_\alpha;\CJ)$ of the relative
generalized Boolean dynamical system $(\CB,\CL,\theta,\CI_\alpha;\CJ)$
is canonically isomorphic to the $C^*$-algebra
$C^*(\widetilde{\CB},\CL,\widetilde{\theta},\widetilde{\CI}_\alpha)$
of a generalized Boolean dynamical system
$(\widetilde{\CB},\CL,\widetilde{\theta},\widetilde{\CI}_\alpha)$, and
we show that if $(\CB,\CL,\theta)$ is a Boolean dynamical system,
$\CJ$ is an ideal of $\CB_{reg}$, and $(\CI_\alpha)_{\alpha\in\CL}$ is
a family of ideals of $\CB$ such that $\theta_\alpha(\CB)\subseteq
\CI_\alpha$ for each $\alpha$, then
$C^*(\CB,\CL,\theta,\CI_\alpha;\CJ)$ is a full hereditary
$C^*$-subalgebra of $C^*(\CB,\CL,\theta,\CB;\CJ)$. It follows that
$C^*(\CB,\CL,\theta,\CI_\alpha;\CJ)$ and
$C^*(\CB,\CL,\theta,\CR_\alpha;\CJ)$ where
$\CR_\alpha=\{A\in\CB:A\subseteq\theta_\alpha(B)\text{ for some
}B\in\CB\}$, are Morita equivalent. We call the latter $C^*$-algebra
the \emph{$C^*$-algebra of the relative Boolean dynamical system
$(\CB,\CL,\theta;\CJ) :=(\CB,\CL,\theta, \CR_\af;\CJ)$} or just the \emph{$C^*$-algebra of the Boolean
dynamical system $(\CB,\CL,\theta)$} if $\CJ=\CB_{reg}$. We thus have
that the $C^*$-algebra of any relative generalized Boolean dynamical
system is Morita equivalent to the $C^*$-algebra of a Boolean
dynamical system.

\subsection*{Pr\'ecis} 
This rest of the paper is organized as follows.

In section \hyperref[Preliminaries]{2}, we recall the notions of
Boolean algebras and Boolean dynamical systems. In section
\hyperref[GBDS]{3}, we introduce the definition of a generalized
Boolean dynamical system (Definition~\ref{def:GBDS}) and its
$C^*$-algebra. 

In section \hyperref[examples]{4}, we show that the
$C^*$-algebras of Boolean dynamical systems with compact range and
closed domain introduced in \cite{COP} and the $C^*$-algebras of
weakly left-resolving normal labelled spaces considered in
\cite{BaCaPa2017} are all $C^*$-algebras of generalized Boolean
dynamical systems (Example~\ref{BDS of cr-cd} and
Example~\ref{labelled graph}).

In section \hyperref[RGBDS is RCP]{5}, we construct from each relative
generalized Boolean dynamical system $(\CB,\CL,\theta, \CI_\af;\CJ)$,
a relative Cuntz-Pimsner algebra that is isomorphic to $C^*(\CB,\CL,
\theta, \CI_\af;\CJ)$ (Theorem~\ref{universal property:RGBDS}). As a
corollary, we get that  $C^*(\CB,\CL, \theta, \CI_\af)$ is isomorphic
to a Cuntz-Pimsner algebra (Corollary~\ref{existence of $C^*$-BDS}).

In section \hyperref[GIUT]{6}, we use Katsura's gauge-invariant
uniqueness theorem for relative Cuntz--Pimsner algebras
\cite[Corollary 11.8]{Ka2007} to obtain a gauge-invariant uniqueness
theorem for $C^*(\CB,\CL,\theta,\CI_\alpha;\CJ)$ (Theorem \ref{GIUT
for RGBDS}). As a corollary, a gauge-invariant uniqueness theorem for $C^*(\CB, \CL,\theta,\CI_\af)$ will also be given (Corollary~\ref{GIUT for GBDS}). 
We then show that $C^*(\CB,\CL,\theta,\CI_\alpha;\CJ)$ is a full hereditary $C^*$-subalgebra of $C^*(\CB,\CL,\theta,\CB;\CJ)$ (Proposition \ref{full hereditary}), and we construct a Boolean dynamical system $(\widetilde{\CB},\CL,\widetilde{\theta})$ such that $C^*(\CB,\CL,\theta,\CI_\alpha;\CJ)$ and $C^*(\widetilde{\CB},\CL,\widetilde{\theta},\widetilde{\CI}_\alpha)$ are isomorphic (Proposition \ref{RGBDS isom GBDS}).

In section \hyperref[gauge-invariant ideals]{7}, we classify the
gauge-invariant ideals of $C^*(\CB, \CL,\theta,\CI_\af;\CJ)$. We show
that a pair $(\CH,\CS)$ where $\CH$ is a hereditary and
$\CJ$-saturated ideal in $\CB$ and $\CS$ is an ideal of $\{A\in\CB:
\theta_\alpha(A)\in\CH\text{ for all but finitely many }\alpha\}$ such
that $\CH\cup\CJ\subseteq\CS$ give rises to a gauge-invariant ideal
$I_{(\CH,\CS)}$ of $C^*(\CB, \CL,\theta,\CI_\af;\CJ)$
(Lemma~\ref{Lemma:IHS}), and prove that the quotient $C^*(\CB,
\CL,\theta,\CI_\af;\CJ)/I_{(\CH,\CS)} $ of $C^*(\CB,
\CL,\theta,\CI_\af;\CJ)$ by the gauge-invariant ideal $I_{(\CH,\CS)}$
is canonically isomorphic to a relative Boolean $C^*$-algebra $C^*(\CB
/ \CH, \CL,\theta, [\CI_\af];[\CS])$
(Proposition~\ref{isomorphism to quotient}). Using this result, we
show that the gauge-invariant ideals of $C^*(\CB,
\CL,\theta,\CI_\af;\CJ)$ are in one-to-one correspondence with the
pairs $(\CH, \CS)$ (Theorem~\ref{gauge invariant
ideal:characterization}).

\section{Preliminaries}\label{Preliminaries}
In this section, we review the notions of Boolean algebras and Boolean
dynamical systems. For the most part we use the notational conventions
of \cite{COP}.

\subsection{Boolean algebras}

A {\em Boolean algebra} is a set $\CB$ with a distinguished element $\emptyset$ and maps $\cap: \CB \times \CB \rightarrow \CB$, $\cup: \CB \times \CB \rightarrow \CB$ and $\setminus: \CB \times \CB \rightarrow \CB$ such that $(\CB,\cap,\cup)$ is a distributive lattice, $A\cap \emptyset=\emptyset$ for all $A\in\CB$, and $(A\cap B)\cup (A\setminus B)=A$ and $(A\cap B)\cap (A\setminus B)=\emptyset$ for all $A,B\in\CB$. The Boolean algebra $\CB$ is called {\em unital} if there exists $1 \in \CB$ such that $1 \cup A = 1$ and $1 \cap A=A$ for all $A \in \CB$. (often, Boolean algebras are assumed to be unital and what we here call a Boolean algebra is often called a \emph{generalized Boolean algebra}).

We call $A\cup B$ the \emph{union} of $A$ and $B$, $A\cap B$ the \emph{intersection} of $A$ and $B$, and $A\setminus B$ the \emph{relative complement} of $B$ with respect to $A$. A subset $\CB' \subseteq \CB$ is called a {\em Boolean subalgebra} if $\emptyset\in\CB'$ and $\CB'$ is closed under taking union, intersection and the relative complement. A Boolean subalgebra of a Boolean algebra is itself a Boolean algebra.

We define a partial order on $\CB$ as follows: for $A,B \in \CB$,
\[
A \subseteq B ~~~\text{if and only if}~~~A \cap B =A.
\]
Then $(\CB, \subseteq)$ is a partially ordered set, and $A\cup B$ and $A\cap B$ are the least upper-bound and the greastest lower-bound of $A$ and $B$ with respect to the partial order $\subseteq$. If a family $\{A_{\ld}\}_{\ld \in \Lambda}$ of elements from $\CB$ has a least upper-bound, then we denote it by $\cup_{\ld \in \Lambda} A_\ld$. If $A\subseteq B$, then we say that $A$ is a \emph{subset of} $B$.

A non-empty subset $\CI$ of $\CB$ is called  an {\em ideal} \cite[Definition 2.4]{COP} if 
\begin{enumerate}
\item[(i)] if $A, B \in \CI$, then $A \cup B \in \CI$,
\item[(ii)] if $A \in \CI$ and $ B \in \CB$, then   $A \cap B \in \CI$. 
\end{enumerate}

An ideal $\CI$ of a Boolean algebra $\CB$ is a Boolean subalgebra. For $A \in \CB$, the ideal generated by $A$ is defined by $\CI_A:=\{ B \in \CB : B \subseteq A\}.$

\subsection{Boolean dynamical systems}

A map $\phi: \CB \rightarrow \CB'$ between two Boolean algebras is called a {\em Boolean homomorphism} (\cite[Definition 2.1]{COP}) if $\phi(A \cap B)=\phi(A) \cap \phi(B)$, $\phi(A \cup B)=\phi(A) \cup \phi(B)$, and $\phi(A \setminus B)=\phi(A) \setminus \phi(B)$ for all $A,B \in \CB$.

A map $\theta: \CB \rightarrow \CB $ is called an {\it action} (\cite[Definition 3.1]{COP})  on a Boolean algebra $\CB$ if it is a Boolean homomorphism with $\theta(\emptyset)=\emptyset$.

\vskip 0.5pc

Given a set $\CL$ and any $n \in \N$, we define $\CL^n:=\{(\af_1, \dots, \af_n): \af_i \in \CL\}$ and $\CL^*:=\cup_{n \geq 0} \CL^n$, where $\CL^0:=\{\emptyset \}$. For $\alpha\in\CL^n$, we write  $|\af|:=n$. For $\af=(\af_1, \dots, \af_n), \beta=(\beta_1,\dots,\beta_m) \in \CL^*$, we will usually write $\af_1 \dots \af_n$ instead of $(\af_1, \dots, \af_n)$ and use $\alpha\beta$ to denote the word $\af_1 \cdots \af_n\beta_1\dots\beta_m$ (if $\alpha=\emptyset$, then $\alpha\beta:=\beta$; and if $\beta=\emptyset$, then $\alpha\beta:=\alpha$). For $k\in\N$, we let $\alpha^k:=\alpha\alpha\dots\alpha$ where the concatenation on the right has $k$ terms. Similary we let $\alpha^0:=\emptyset$. For $1\leq i\leq j\leq |\af|$, we denote by $\af_{[i,j]}$ the sub-word $\af_i\cdots \af_j$ of  $\af=\af_1\af_2\cdots\af_{|\af|}$, where $\af_{[i,i]}=\af_i$.

\begin{dfn} 
A {\em Boolean dynamical system} is a triple $(\CB,\CL,\theta)$ where $\CB$ is a Boolean algebra, $\CL$ is a set, and $\{\theta_\af\}_{\af \in \CL}$ is a set of actions on $\CB$ such that for $\af=\af_1 \cdots \af_n \in \CL^*\setminus\{\emptyset\}$, the action $\theta_\af: \CB \rightarrow \CB$ is defined as $\theta_\af:=\theta_{\af_n} \circ \cdots \circ \theta_{\af_1}$.  We  also define $\theta_\emptyset:=\text{Id}$. 
\end{dfn}

\begin{remark}\label{compact range} 
Given a  Boolean algebra $\CB$, we say that an action $\theta$ on $\CB$ has {\em compact range} \cite[Definition 3.1]{COP} if $\{\theta(A)\}_{A \in \CB}$ has a least upper-bound. We denote by $\CR_{\theta}$ this least upper-bound if it exists. We say that an action $\theta$ has {\em closed domain} \cite[Definition 3.1]{COP} if there exists $\CD_{\theta} \in \CB$ such that $\theta(\CD_{\theta})=\CR_{\theta}$. In \cite[Definition 3.3]{COP}, a triple $(\CB,\CL,\theta)$ is called a Boolean dynamical system if  $\theta_\af$ has compact range $\CR_{\theta_\af}$ and closed domain $\CD_{\theta_\af}$  for each $\af \in \CL$. 

Notice that when we call $(\CB,\CL,\theta)$ a Boolean dynamical system in this paper,
 we do not assume that $\theta_\af$ has compact range and closed domain.
\end{remark}

For $B \in \CB$, we define
\[
\Delta_B^{(\CB,\CL,\theta)}:=\{\af \in \CL:\theta_\af(B) \neq
\emptyset \} ~\text{and}~  \ld_B^{(\CB,\CL,\theta)}:=|\Delta_B^{(\CB,\CL,\theta)}|.
\]
We will often just write $\Delta_B$ and $\ld_B$ instead of
$\Delta_B^{(\CB,\CL,\theta)}$ and $\ld_B^{(\CB,\CL,\theta)}$.

We say that $A \in \CB$ is {\em regular} (\cite[Definition 3.5]{COP})
if for any $\emptyset \neq B \in \CI_A$, we have $0 < \ld_B < \infty$.
If $A \in \CB$ is not regular, then it is called a {\em singular} set.
We write $\CB^{(\CB,\CL,\theta)}_{reg}$ or just $\CB_{reg}$ for the
set of all regular sets. Notice that $\emptyset\in\CB_{reg}$.

\subsection{Quotient Boolean dynamical systems}

Let $(\CB,\CL,\theta)$ be a Boolean dynamical system and let $\CJ$ be
an ideal of $\CB_{reg}$ and $\CH$ an ideal of $\CB$. As in \cite{COP},
we say that $\CH$ is {\em hereditary} if $\theta_{\af}(A) \in \CH$ for
$A \in \CH$ and $ \af \in \CL$, and we say that $\CH$ is {\em
$\CJ$-saturated} if $A \in \CH$ whenever $A \in \CJ$ and
$\theta_{\af}(A) \in \CH$ for all $\af \in \Delta_A$. When
$\CJ=\CB_{reg}$, then we often say {\em saturated} instead of
$\CJ$-saturated.

\vskip 1pc

If $\CH$ is an ideal of a Boolean algebra $\CB$, then the relation
\begin{eqnarray}\label{equivalent relation} 
A \sim B \iff \mathbf{R}: A \cup A' = B \cup B'~\text{for some}~ A', B' \in \CH
\end{eqnarray}
defines an equivalent relation on $\CB$ (\cite[Definition 2.5]{COP}).

\begin{remark} 
We have that $A\sim B$ if and only if either (and thus both) of the following two equivalent conditions hold.
\begin{enumerate}
\item[$\mathbf{R1}$] $A \cup A' = B \cup B' ~\text{for some}~ A', B' \in \CH ~\text{with}~  A \cap A' = B \cap B' =\emptyset$.
\item[$\mathbf{R2}$] $ A \cup C = B \cup C~\text{for some}~ C \in \CH $.
\end{enumerate}
\end{remark} 

\begin{proof} 
Clearly, $  \mathbf{R1} \implies \mathbf{R} $ and  $\mathbf{R2} \implies \mathbf{R}$. Suppose that $ A \cup A' = B \cup B'~\text{for some}~ A', B' \in \CH$. Then $ A' \setminus A,~ B' \setminus B, ~A' \cup B' \in \CH$. 
One sees that 
$$ A \cup (A'\cup B') = (A \cup A') \cup B' = (B \cup B')\cup B' = B \cup B', $$
 $$ B \cup (A'\cup B')= (A'\cup B') \cup B =A' \cup (A \cup A')=A \cup A',$$
and hence, $ A \cup (A'\cup B') = B \cup (A'\cup B')$. Thus, $ \mathbf{R} \implies  \mathbf{R2}$. 

Also, we have that $A \cup (A' \setminus A) = B \cup (B' \setminus B)$
with $A \cap (A' \setminus A) = B \cap (B' \setminus B) =\emptyset.$ Thus, $ \mathbf{R} \implies  \mathbf{R1}$. 
\end{proof}

We denote the equivalent class of $A \in \CB$ with respect to $\sim$ by $[A]$ (or $[A]_\CH$ if we need to specify the ideal $\CH$) and the set of all equivalent classes of $\CB$ by $\CB / \CH$. It is easy to check that $\CB / \CH$ is a Boolean algebra with operations defined by 
\[
[A]\cap [B]=[A\cap B], [A]\cup [B]=[A\cup B] ~\text{and}~ [A]\setminus [B]=[A\setminus B].
\]
The partial order $\subseteq$ on $\CB / \CH$ is characterized by  
\begin{align*} 
[A] \subseteq [B] & \iff A \subseteq B\cup W  ~\text{for some}~  W \in \CH \\
&\iff  [A] \cap [B] =[A].
\end{align*}

If in addition $\CH$ is hereditary, and we define $\theta_{\af}([A])=[\theta_{\af}(A)]$ for all $[A] \in \CB/\CH$ and $ \af \in \CL$, then  $(\CB / \CH, \CL,\theta)$ becomes a Boolean dynamical system. We call it a {\em quotient Boolean dynamical system} of $(\CB,\CL,\theta)$.

\vskip 1pc

\section{$C^*$-algebras of generalized boolean dynamical systems}\label{GBDS}
In this section, we introduce a definition of  generalized Boolean dynamical systems and their $C^*$-algebras. 
Let $(\CB,\CL,\theta)$ be a Boolean dynamical system and let 
\[
\mathcal{R}_\alpha^{(\CB,\CL,\theta)}:=\{A\in\mathcal{B}:A\subseteq\theta_\alpha(B)\text{ for some }B\in\mathcal{B}\}
\]
for each $\alpha \in \mathcal{L}$. Note that each $\CR_\af^{(\CB,\CL,\theta)}$ is an ideal of $\CB$. 

We will often, when it is clear which Boolean dynamical system we are working with, just write $\CR_\af$ instead of $\CR_\af^{(\CB,\CL,\theta)}$.

\begin{remark} 
In \cite{COP}, the notaion $\CR_\af$ is used to denote the least-upper bound of $\{\theta_\af(A)\}_{A \in \CB}$ when $\theta_\af$ has compact range. 
\end{remark}

\begin{dfn} \label{def:GBDS} 
A {\em generalized Boolean dynamical system} is a quadruple  $(\CB,\CL,\theta,\CI_\alpha)$ where  $(\CB,\CL,\theta)$ is  a Boolean dynamical system  and  $\{\CI_\alpha:\alpha\in\CL\}$ is a family of ideals in $\CB$ such that $\CR_\alpha\subseteq\CI_\alpha$ for each $\alpha\in\CL$. A {\em  relative generalized Boolean dynamical system} is a pentamerous $(\CB,\CL,\theta,\CI_\alpha;\CJ)$ where  $(\CB,\CL,\theta,\CI_\alpha)$ is a  generalized Boolean dynamical system  and  $\CJ$ is an ideal of  $\CB_{reg}$.
\end{dfn}

\begin{dfn}\label{def:representation of RGBDS} 
Let $(\CB,\CL,\theta, \CI_\af; \CJ)$ be a relative generalized Boolean dynamical system. A {\it  $(\CB, \CL, \theta, \CI_\af;\CJ)$-representation} is a family of projections $\{P_A:A\in\mathcal{B}\}$ and a family of partial isometries $\{S_{\alpha,B}:\alpha\in\mathcal{L},\ B\in\mathcal{I}_\alpha\}$ such that for $A,A'\in\mathcal{B}$, $\alpha,\alpha'\in\mathcal{L}$, $B\in\mathcal{I}_\alpha$ and $B'\in\mathcal{I}_{\alpha'}$,
\begin{enumerate}
\item[(i)] $P_\emptyset=0$, $P_{A\cap A'}=P_AP_{A'}$, and $P_{A\cup A'}=P_A+P_{A'}-P_{A\cap A'}$;
\item[(ii)] $P_AS_{\alpha,B}=S_{\alpha,  B}P_{\theta_\af(A)}$;
\item[(iii)] $S_{\alpha,B}^*S_{\alpha',B'}=\delta_{\alpha,\alpha'}P_{B\cap B'}$;
\item[(iv)] $P_A=\sum_{\af \in\Delta_A}S_{\af,\theta_\af(A)}S_{\af,\theta_\af(A)}^*$ for all  $A\in \mathcal{J}$. 
\end{enumerate}
\end{dfn}

Given a $(\CB, \CL, \theta, \CI_\af;\CJ)$-representation $\{P_A, S_{\af,B}\}$ in a $C^*$-algebra $\CA$, we denote by $C^*(P_A, S_{\af,B})$ the $C^*$-subalgebra of $\CA$ generated by $\{ P_A,  S_{\af,B}\}$.

We will show in Section \hyperref[RGBDS is RCP]{5} that there exists a universal $(\CB, \CL, \theta, \CI_\af;\mathcal{J})$-representation $\{p_A, s_{\af,B}: A\in \CB, \af \in \CL ~\text{and}~ B \in \CI_\af\}$ in the sense that if $\{P_A,S_{\af,B}\}$ is a   $(\CB, \CL, \theta, \CI_\af;\CJ)$-representation in a $C^*$-algebra $\CA$, then there exists a unique $*$-homomorphism $\pi_{S,P}: C^*(p_A,s_{\af,B}) \to \CA$ such that $\pi_{S,P}(p_A)=P_A$ and $\pi_{S,P}(s_{\af,B})=S_{\af,B}$ for $A \in \CB$, $\af \in \CL$ and $B \in \CI_\af$.  
 We write $C^*(\mathcal{B},\mathcal{L},\theta, \CI_\af;\mathcal{J})$ for $C^*(p_A,s_{\af,B})$ and    call it the {\it  $C^*$-algebra of $(\CB,\CL,\theta,\CI_\alpha,\CJ)$}.

By a {\it Cuntz--Krieger representation of $(\CB, \CL,
\theta,\CI_\af)$} we mean a $(\CB, \CL, \theta,\CI_\af;
\CB_{reg})$-representation, and by a {\it  Toeplitz representation of
$(\CB, \CL, \theta,\CI_\af)$} we mean a $(\CB, \CL, \theta,\CI_\af;
\emptyset)$-representation. We write $C^*(\CB,\CL,\theta, \CI_\af)$
for $C^*(\CB, \CL, \theta, \CI_\af;\CB_{reg})$ and call it the {\it
$C^*$-algebra of $(\CB,\CL,\theta,\CI_\alpha)$}, and we write
$\CT(\CB, \CL, \theta, \CI_\af)$ for $C^*(\CB, \CL, \theta,\CI_\af;
\emptyset)$ and call it the {\it Toeplitz $C^*$-algebra of
$(\CB,\CL,\theta,\CI_\alpha)$}.

\vskip1pc

 When $(\CB,\CL,\theta)$ is a Boolean dynamical system, then we write
$C^*(\CB,\CL,\theta)$ for $C^*(\CB,\CL,\theta, \CR_\af)$ and call it
the \emph{$C^*$-algebra of $(\CB,\CL,\theta)$}. If moreover $\CJ$ is
an ideal of $\CB_{reg}$, then we write $C^*(\CB,\CL,\theta;\CJ)$ for
$C^*(\CB,\CL,\theta, \CR_\af;\CJ)$ and call it the \emph{$C^*$-algebra
of $(\CB,\CL,\theta;\CJ)$}.

We shall in Proposition \ref{RGBDS isom GBDS} see that the
$C^*$-algebra $C^*(\CB,\CL,\theta,\CR_\alpha;\CJ)$ of a relative
generalized Boolean dynamical system is isomorphic to the
$C^*$-algebra of a (different) generalized Boolean dynamical system
$(\tilde{\CB},\CL,\tilde{\theta},\tilde{\CR}_\alpha)$. Moreover, it
follows from Proposition \ref{full hereditary} that
$C^*(\tilde{\CB},\CL,\tilde{\theta},\tilde{\CR}_\alpha)$ and
$C^*(\tilde{\CB},\CL,\tilde{\theta})$ are Morita equivalent. We thus
have that the $C^*$-algebra of any relative generalized Boolean
dynamical system is Morita equivalent to the $C^*$-algebra of a
Boolean dynamical system.

\begin{remark}  It follows from the universal property of $C^*(\CB,\CL,\theta,
\CI_\af;\CJ)=C^*(p_A, s_{\af,B})$ that there is a strongly continuous
action $\gm:\mathbb T\to {\rm Aut}(C^*(\CB,\CL,\theta, \CI_\af;\CJ))$,
which we call the {\it gauge action}, such that

\[
\gm_z(p_A)=p_A   \ \text{ and } \ \gm_z(s_{\af,B})=zs_{\af,B}
\]
for $A\in \CB$, $\af \in \CL$ and $B \in \CI_\af$.
\end{remark}

\vskip1pc

\begin{remark} 
Let $(\CB,\CL,\theta,\CI_\af;\CJ)$ be a relative  generalized  Boolean dynamical system. The fact that a family    $\{P_A, S_{\af,B}: A \in \CB,~ \af \in \CL ~\text{and}~ B \in \CI_\af \}$ satisfies (i)-(iii) in Definition \ref{def:representation of RGBDS} is equivalent to that the family satisfies that 
\begin{enumerate}
\item[(a)] $P_\emptyset=0$, $P_{A\cap A'}=P_AP_{A'}$, and $P_{A\cup A'}=P_A+P_{A'}-P_{A\cap A'}$;
\item[(b)] $P_AS_{\alpha,B}=S_{\alpha, \theta_\af(A)\cap B}$;
\item[(c)] $S_{\alpha,B}P_A=S_{\alpha, B \cap A}$;
\item[(d)] $S_{\alpha,B}^*S_{\alpha',B'}=\delta_{\alpha,\alpha'}P_{B\cap B'}$
\end{enumerate}
for $A,A'\in\mathcal{B}$, $\alpha,\alpha'\in\mathcal{L}$, $B\in\mathcal{I}_\alpha$ and $B'\in\mathcal{I}_{\alpha'}$.
\end{remark}

\begin{proof}  
By (b) and (c), we see that $P_AS_{\af,B}=S_{\af, \theta_\af(A) \cap B}=S_{\af, B}P_{\theta_\af(A)}$. 

For the converse, note that $S_{\af,B}^*P_A=(P_AS_{\af, B})^*=(S_{\af,B}P_{\theta_\af(A)})^*=P_{\theta_\af(A)}S_{\af,B}^*$. It then follows that
 \begin{align*} &\|P_AS_{\af,B}-S_{\af, \theta_\af(A) \cap B}\|^2 \\&=\|(P_AS_{\af,B}-S_{\af, \theta_\af(A) \cap B})^*(P_AS_{\af,B}-S_{\af, \theta_\af(A) \cap B})\| \\
&=\|S_{\af,B}^*P_AS_{\af,B}-S_{\af,B}^*P_AS_{\af, \theta_\af(A)\cap B}-S_{\af, \theta_\af(A)\cap B}^*P_AS_{\af,B}+P_{\theta_\af(A)\cap B}\| \\
&=\|S_{\af,B}^*S_{\af,B}P_{\theta_\af(A)}-P_{\theta_\af(A)}S_{\af,B}^*S_{\af, \theta_\af(A)\cap B}-S_{\af, \theta_\af(A)\cap B}^*S_{\af,B}P_{\theta_\af(A)}+P_{\theta_\af(A)\cap B}\| \\
&=\|P_{\theta_\af(A)\cap B}-P_{\theta_\af(A)\cap B}-P_{\theta_\af(A)\cap B}+P_{\theta_\af(A)\cap B}\|=0.
\end{align*}
Thus, we have $P_AS_{\af,B}=S_{\af, \theta_\af(A) \cap B}$. One also  sees that  
$\|S_{\af,B}P_A - S_{\af, B \cap A}\|^2=0$,
and hence, we have $S_{\af,B}P_A = S_{\af, B \cap A}$. 
\end{proof}

For $\af=\af_1\af_2 \cdots \af_n \in \CL^* \setminus \{\emptyset\}$, we define
\[
\CI_\af:=\{A \in \CB : A \subseteq \theta_{\af_2 \cdots \af_n}(B)~\text{for some }~ B \in \CI_{\af_1}\}.
\]
 For $\af =\emptyset$, we  define $\CI_\emptyset := \CB$

\begin{dfn} Let $\{P_A,\ S_{\alpha,B}: A\in\CB,\ \alpha\in\CL,\ B\in\CI_\alpha\}$ be a $(\CB,\CL,\theta,\CI_\alpha;\CJ)$-representation.
For $\af=\af_1\af_2 \cdots \af_n \in \CL^* \setminus \{\emptyset\}$ and $A \in \CI_{\af}$, we define
\[
S_{\af,A}:=S_{\af_1,B}S_{\af_2, \theta_{\af_2}(B)}S_{\af_3, \theta_{\af_2\af_3}(B)} \cdots S_{\af_n,A},
\] 
where $B \in \CI_{\af_1}$ is such that $A \subseteq \theta_{\af_2 \cdots \af_n}(B)$. For $\af = \emptyset$, we also define $S_{\emptyset, A}:=P_A$.
\end{dfn}

\begin{remark} 
In the above definition, $S_{\af,A}$ is independent of the choice of $B \in \CI_{\af_1}$. It is enough to show that for $|\af|=2$. Put $\af=\af_1\af_2 \in \CL^*$,  $A \subseteq \theta_{\af_2}(B_1)$ and $A \subseteq \theta_{\af_2}(B_2)$ for some $B_1, B_2 \in \CI_{\af_1}$. Then we see that 
\begin{align*} 
&\|S_{\af_1, B_1}S_{\af_2, A}-S_{\af_1, B_2}S_{\af_2, A}\|^2\\
&=\|(S_{\af_2,A}^*S_{\af_1, B_1}^*-S_{\af_2,A}^*S_{\af_1, B_2}^*)(S_{\af_1, B_1}S_{\af_2, A}-S_{\af_1,B_2}S_{\af_2, A})\|\\
&=\|S_{\af_2,A}^*P_{B_1}S_{\af_2,A}-S_{\af_2,A}^*P_{B_1 \cap B_2}S_{\af_2,A} \\& \hskip 9pc- S_{\af_2,A}^*P_{B_1 \cap B_2}S_{\af_2,A}+ S_{\af_2,A}^*P_{B_2}S_{\af_2,A}\| \\
&=\| P_{A \cap \theta_{\af_2}(B_1)}-P_{A \cap \theta_{\af_2}(B_1 \cap B_2)}-P_{A \cap \theta_{\af_2}(B_1 \cap B_2)}+P_{A \cap \theta_{\af_2}(B_2)}\| \\
&=\|P_A -P_A-P_A+P_A\|=0. 
\end{align*}
Thus, it follows that $S_{\af_1, B_1}S_{\af_2, A}=S_{\af_1, B_2}S_{\af_2, A}$.
\end{remark}

It is straightforward to check the following lemma.

\begin{lem}  Let $\{P_A,\ S_{\alpha,B}: A\in\CB,\ \alpha\in\CL,\ B\in\CI_\alpha\}$ be a $(\CB,\CL,\theta,\CI_\alpha;\CJ)$-representation. For
 $\af \in \CL^*$, $A \in \CB$ and $B \in \CI_\af$, we have  
\begin{enumerate}
\item $P_AS_{\af,B}=S_{\af,B} P_{\theta_\af(A)}$,
\item $P_AS_{\alpha,B}=S_{\alpha, \theta_\af(A)\cap B}$,
\item $S_{\alpha,B}P_A=S_{\alpha, B \cap A}$.
\end{enumerate}
\end{lem}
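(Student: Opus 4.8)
The plan is to induct on the length $n=|\af|$, reducing each identity to the single-letter relations (ii), (b) and (c) recorded before the lemma. The base cases are immediate: for $\af=\emptyset$ one has $S_{\emptyset,B}=P_B$ and $\theta_\emptyset=\mathrm{Id}$, so all three statements reduce to axiom (i); and the case $|\af|=1$ is precisely (ii), (b) and (c). Moreover, I would note at the outset that (2) is a formal consequence of (1) and (3) for the \emph{same} word $\af$, since applying first (1) and then (3) gives
\[
P_AS_{\af,B}=S_{\af,B}P_{\theta_\af(A)}=S_{\af,B\cap\theta_\af(A)}=S_{\af,\theta_\af(A)\cap B}.
\]
Hence it suffices to establish (1) and (3) for all $\af$, after which (2) follows word-by-word.

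The engine of the induction is a factoring of $S_{\af,C}$. Writing $\af=\af_1\af'$ with $\af'=\af_2\cdots\af_n$, and choosing for $C\in\CI_\af$ a witness $B_0\in\CI_{\af_1}$ with $C\subseteq\theta_{\af'}(B_0)$, one reads off directly from the defining product that
\[
S_{\af,C}=S_{\af_1,B_0}\,S_{\af',C},
\]
where $S_{\af',C}$ is formed using $\theta_{\af_2}(B_0)\in\CI_{\af_2}$ (legitimate because $\theta_{\af_2}(B_0)\in\CR_{\af_2}\subseteq\CI_{\af_2}$ and $\theta_{\af_3\cdots\af_n}\circ\theta_{\af_2}=\theta_{\af'}$). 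For (1) I would then compute, using the single-letter case of (1) and the inductive hypothesis for $\af'$,
\[
P_AS_{\af,B}=S_{\af_1,B_0}P_{\theta_{\af_1}(A)}S_{\af',B}=S_{\af_1,B_0}S_{\af',B}P_{\theta_{\af'}(\theta_{\af_1}(A))}=S_{\af,B}P_{\theta_\af(A)},
\]
the last equality using $\theta_{\af'}\circ\theta_{\af_1}=\theta_\af$. For (3) I would compute, using the inductive hypothesis for $\af'$,
\[
S_{\af,B}P_A=S_{\af_1,B_0}S_{\af',B}P_A=S_{\af_1,B_0}S_{\af',B\cap A}=S_{\af,B\cap A},
\]
where the final reassembly uses the factoring of $S_{\af,B\cap A}$ with the same witness $B_0$.

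The only delicate point is the bookkeeping of the witness $B_0$. I must verify that the factoring is well posed --- that $\theta_{\af_2}(B_0)$ genuinely lies in $\CI_{\af_2}$ and that the containment $C\subseteq\theta_{\af'}(B_0)$ propagates to the containment needed to form $S_{\af',C}$ --- and, in the step for (3), that passing from $B$ to the subset $B\cap A$ leaves the factoring intact. The latter rests on $\CI_\af$ being an ideal, so that $B\cap A\in\CI_\af$, on the monotonicity $B\cap A\subseteq B\subseteq\theta_{\af'}(B_0)$ which lets the \emph{same} $B_0$ serve as witness, and on the independence of $S_{\af',\cdot}$ from the choice of witness recorded in the Remark after the definition of $S_{\af,A}$. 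Once these containments and the independence statement are in place, the three identities fall out exactly as displayed.
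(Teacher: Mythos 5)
Your proof is correct, and it fills in exactly the verification the paper leaves out: the paper gives no argument at all, stating only that the lemma is ``straightforward to check,'' and your induction on $|\af|$ via the factorization $S_{\af,C}=S_{\af_1,B_0}S_{\af',C}$ (with the witness bookkeeping $\theta_{\af_2}(B_0)\in\CR_{\af_2}\subseteq\CI_{\af_2}$ and the appeal to witness-independence) is precisely the intended routine computation. The reduction of (2) to (1) and (3) is also valid, so nothing is missing.
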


\begin{lem}\label{mul-s*s} 
 Let $\{P_A,\ S_{\alpha,B}: A\in\CB,\ \alpha\in\CL,\ B\in\CI_\alpha\}$ be a $(\CB,\CL,\theta,\CI_\alpha;\CJ)$-representation. For $\af, \bt \in \CL^*$, $A \in \CI_\af$ and $B \in \CI_\bt$, we have
\[
S_{\af,A}^*S_{\bt,B}= \left\{ 
\begin{array}{ll}
    P_{A \cap B} & \hbox{if\ }\af =\bt \\
    S_{\af', A \cap \theta_{\af'}(B)}^* & \hbox{if\ }\af =\bt\af' \\
    S_{\bt',B \cap \theta_{\bt'}(A)}   & \hbox{if\ } \bt=\af\bt' \\
    0 & \hbox{otherwise.} \\
\end{array}
\right.
\]
\end{lem}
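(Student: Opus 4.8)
The plan is to induct on $|\af|+|\bt|$, after first recording a left-factorization of the path isometries. First I would observe that if $\af=\af_1\af_2\cdots\af_n$ is nonempty and $A\in\CI_\af$, then $A\in\CI_{\hat\af}$ where $\hat\af:=\af_2\cdots\af_n$, and
\[
S_{\af,A}=S_{\af_1,B_0}\,S_{\hat\af,A}
\]
for any $B_0\in\CI_{\af_1}$ with $A\subseteq\theta_{\hat\af}(B_0)$. Both assertions are immediate from the definition of $S_{\af,A}$: taking the witness $\theta_{\af_2}(B_0)\in\CR_{\af_2}\subseteq\CI_{\af_2}$ for $\hat\af$ turns the defining product for $S_{\hat\af,A}$ into exactly the tail product $S_{\af_2,\theta_{\af_2}(B_0)}\cdots S_{\af_n,A}$ appearing in $S_{\af,A}$. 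I would also note that each $\CI_\gm$ ($\gm\in\CL^*$) is an ideal, hence downward closed, which I will use to keep truncated index sets inside the correct $\CI_\gm$.

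For the base of the induction I would treat the cases where at least one of $\af,\bt$ is empty. If $\af=\emptyset$ then $S_{\emptyset,A}^*S_{\bt,B}=P_AS_{\bt,B}=S_{\bt,\theta_\bt(A)\cap B}$ by part (2) of the preceding lemma; this is $P_{A\cap B}$ when $\bt=\emptyset$ and $S_{\bt,B\cap\theta_\bt(A)}$ (the $\bt=\af\bt'$ form with $\bt'=\bt$) otherwise, matching the claim. The case $\bt=\emptyset$ follows symmetrically by taking adjoints, using $S_{\af,A}^*P_B=(P_BS_{\af,A})^*=S_{\af,\theta_\af(B)\cap A}^*$.

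For the inductive step, with $\af,\bt$ both nonempty, write $\af=\af_1\hat\af$, $\bt=\bt_1\hat\bt$, choose witnesses $B_0\in\CI_{\af_1}$ with $A\subseteq\theta_{\hat\af}(B_0)$ and $B_0'\in\CI_{\bt_1}$ with $B\subseteq\theta_{\hat\bt}(B_0')$, and use the factorization to get
\[
S_{\af,A}^*S_{\bt,B}=S_{\hat\af,A}^*\,S_{\af_1,B_0}^*S_{\bt_1,B_0'}\,S_{\hat\bt,B}=\delta_{\af_1,\bt_1}\,S_{\hat\af,A}^*\,P_{B_0\cap B_0'}\,S_{\hat\bt,B}
\]
via relation (iii). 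If $\af_1\neq\bt_1$ this vanishes, which is the desired ``otherwise'' value since differing first letters force neither word to be a prefix of the other. If $\af_1=\bt_1$ I would absorb the projection on the right, $P_{B_0\cap B_0'}S_{\hat\bt,B}=S_{\hat\bt,B''}$ with $B'':=\theta_{\hat\bt}(B_0\cap B_0')\cap B$, and apply the induction hypothesis to $S_{\hat\af,A}^*S_{\hat\bt,B''}$, whose length-sum has dropped by two. Since stripping the common first letter preserves all the prefix relations ($\af=\bt\Leftrightarrow\hat\af=\hat\bt$, $\af=\bt\af'\Leftrightarrow\hat\af=\hat\bt\af'$, $\bt=\af\bt'\Leftrightarrow\hat\bt=\hat\af\bt'$), the four cases correspond, and it remains only to check that $B''$ collapses correctly in each.

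The main work, and the only real obstacle, is this last collapse. Here I would use that $B\subseteq\theta_{\hat\bt}(B_0')$ gives $B''=B\cap\theta_{\hat\bt}(B_0)$, that $\theta$ is a Boolean homomorphism (so it preserves intersections and is monotone), and that $A\subseteq\theta_{\hat\af}(B_0)$. For instance, in the case $\hat\bt=\hat\af\bt'$ the hypothesis yields $S_{\bt',B''\cap\theta_{\bt'}(A)}$, and since $\theta_{\bt'}(A)\subseteq\theta_{\bt'}(\theta_{\hat\af}(B_0))=\theta_{\hat\bt}(B_0)$ one gets $B''\cap\theta_{\bt'}(A)=B\cap\theta_{\bt'}(A)$, exactly the claimed index; the cases $\hat\af=\hat\bt$ and $\hat\af=\hat\bt\af'$ are dispatched by the same kind of containment. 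Since each $S_{\gm,\cdot}$ is independent of its witness, matching the index sets suffices to conclude.
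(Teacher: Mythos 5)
Your proof is correct, but it is organized differently from the paper's. The paper writes out the full defining products $S_{\af,A}=S_{\af_1,A'}S_{\af_2,\theta_{\af_2}(A')}\cdots S_{\af_n,A}$ and $S_{\bt,B}=S_{\bt_1,B'}\cdots S_{\bt_m,B}$, proves the diagonal case $\af=\bt$ by telescoping through the middle with relation (iii), and then reduces the prefix case $\af=\bt\af'$ to the diagonal case via the block factorization $S_{\bt\af',A}=S_{\bt,\theta_{\bt_2\cdots\bt_{|\bt|}}(A')}S_{\af',A}$ followed by one application of the rule $S_{\af',A}^*P_C=S_{\af',\theta_{\af'}(C)\cap A}^*$; the remaining case is dismissed in one line by (iii). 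You instead run a single strong induction on $|\af|+|\bt|$, stripping the common first letter of both words at each step and treating all four cases in parallel; the price is the bookkeeping needed to show that the perturbed index $B''=B\cap\theta_{\hat\bt}(B_0)$ collapses back to the stated index in each case, which you carry out correctly using that the actions are Boolean homomorphisms together with the witness containments $A\subseteq\theta_{\hat\af}(B_0)$ and $B\subseteq\theta_{\hat\bt}(B_0')$ (and the two cases you leave to the reader do go through by the same containment argument). What each approach buys: the paper's block decomposition is shorter, since the index collapse is isolated in a single $S^*P$ computation; your unified induction makes the ``otherwise'' case fully explicit (in the paper it is an appeal to (iii) that implicitly requires telescoping up to the first differing letter), handles the degenerate cases $\af=\emptyset$ or $\bt=\emptyset$ cleanly in the base step, and makes visible exactly where downward closedness of the ideals $\CI_\gm$ and witness-independence of $S_{\gm,\cdot}$ are used.
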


\begin{proof} 
Let $\af=\af_1\af_2 \cdots \af_n$, $\bt=\bt_1\bt_2 \cdots \bt_m  \in \CL^*$ and put 
\begin{align*}
S_{\af, A} &=S_{\af_1,A'}S_{\af_2,\theta_{\af_2}(A')}S_{\af_3, \theta_{\af_2\af_3}(A')} \cdots S_{\af_n,A}, \\
S_{\bt, B}&=S_{\bt_1,B'}S_{\bt_2,\theta_{\bt_2}(B')}S_{\bt_3, \theta_{\bt_2\bt_3}(B')} \cdots S_{\bt_m,B},
\end{align*}
where $A' \in \CI_{\af_1}$ such that $A \subseteq \theta_{\af_2 \cdots \af_n}(A')$ and $B' \in \CI_{\bt_1}$ such that $B \subseteq \theta_{\bt_2 \cdots \bt_m}(B')$. If $\af=\bt$, we have 
\begin{align*}
S_{\af,A}^*S_{\af,B} &= ( S_{\af_n,A}^* \cdots S_{\af_2,\theta_{\af_2}(A')}^* S_{\af_1, A'}^*)(S_{\af_1,B'}S_{\af_2,\theta_{\af_2}(B')} \cdots S_{\af_n,B})\\
&=S_{\af_n,A}^* \cdots S_{\af_2,\theta_{\af_2}(A')}^*P_{A' \cap B'}S_{\af_2,\theta_{\af_2}(B')} \cdots S_{\af_n,B} \\
&= S_{\af_n,A}^* \cdots S_{\af_2,\theta_{\af_2}(A')}^*S_{\af_2,\theta_{\af_2}(B')} P_{\theta_{\af_2}(A' \cap B')} \cdots S_{\af_n,B} \\
&= S_{\af_n,A}^* \cdots P_{\theta_{\af_2}(A' \cap B')} \cdots S_{\af_n,B} \\
& \hskip 0.5pc  \vdots \\
&= S_{\af_n,A}^* S_{\af_n,B}  P_{\theta_{\af_2 \cdots \af_n}(A' \cap B')}  \\
&=P_{A \cap B}. 
\end{align*}
If $\af =\bt\af'$, we first note that 
\begin{align*}
S_{\bt\af',A} &=\big(S_{\bt_1, A'}S_{\bt_2, \theta_{\bt_2}(A')} \cdots S_{\bt_{|\bt|}, \theta_{\bt_2 \cdots \bt_{|\bt|}}(A')} \big)S_{\af_1', \theta_{\bt_2 \cdots \bt_{|\bt|}\af_1'}(A')} \cdots S_{\af'_{|\af'|}, A} \\&=S_{\bt, \theta_{\bt_2 \cdots \bt_{|\bt|}}(A')}S_{\af',A}.
\end{align*}
Thus, we have
\begin{align*} 
S_{\bt\af', A}^*S_{\bt,B} &=  (S_{\bt, \theta_{\bt_2 \cdots \bt_{|\bt|}}(A')}S_{\af',A})^*S_{\bt, B} \\
&=S_{\af',A}^*S_{\bt, \theta_{\bt_2 \cdots \bt_{|\bt|}}(A')}^*S_{\bt, B} \\
&=S_{\af',A}^*P_{(\theta_{\bt_2 \cdots \bt_{|\bt|}}(A'))\cap B} \\
&= S_{\af', (\theta_{\bt_2 \cdots \bt_{|\bt|}\af'}(A')) \cap \theta_{\af'}(B) \cap A}^* \\
&=S_{\af', A \cap \theta_{\af'}(B) }^*. 
\end{align*}
Similarly, we also have $S_{\af,A}^*S_{\af\bt', B}=S_{\bt',B \cap \theta_{\bt'}(A)}$. Otherwise, $S_{\af,A}^*S_{\bt,B}=0$ by Definition~\ref{def:representation of RGBDS}(iii). 
\end{proof}

As a corollary of Lemma \ref{mul-s*s}, we have the following. 

\begin{lem}\label{cal}
 Let $\{P_A,\ S_{\alpha,B}: A\in\CB,\ \alpha\in\CL,\ B\in\CI_\alpha\}$ be a $(\CB,\CL,\theta,\CI_\alpha;\CJ)$-representation. For $\af,\bt,\mu,\nu \in \CL^*$, $A \in \CI_\af$, $B \in \CI_\bt$,  $C \in \CI_\mu$ and $D \in \CI_\nu$,   we have
\[
(S_{\af,A}S_{\bt,B}^*)(S_{\mu,C}S_{\nu,D}^*)= \left\{ 
\begin{array}{ll}
    S_{\af,A \cap B \cap C}S_{\nu, D \cap B \cap C}^* & \hbox{if\ }\bt =\mu \\
    S_{\af,A}S_{\nu\bt', \theta_{\bt'}(C \cap D) \cap B}^*    & \hbox{if\ }\bt =\mu\bt' \\
    S_{\af\mu',\theta_{\mu'}(A \cap B ) \cap C}S_{\nu,D}^*                  & \hbox{if\ } \mu=\bt\mu' \\
    0   & \hbox{otherwise.} \\
\end{array}
\right.
\]
\end{lem}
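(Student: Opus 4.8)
The plan is to reduce everything to Lemma~\ref{mul-s*s} by regrouping the four-fold product around its middle. Writing
\[
(S_{\af,A}S_{\bt,B}^*)(S_{\mu,C}S_{\nu,D}^*)=S_{\af,A}\,(S_{\bt,B}^*S_{\mu,C})\,S_{\nu,D}^*,
\]
I would apply Lemma~\ref{mul-s*s} to the inner factor $S_{\bt,B}^*S_{\mu,C}$. This immediately splits the computation into exactly the four cases appearing in the statement, according to whether $\bt=\mu$, $\mu=\bt\mu'$, $\bt=\mu\bt'$, or none of these. In the last case $S_{\bt,B}^*S_{\mu,C}=0$ and the whole product vanishes, matching the ``otherwise'' line, so only three cases require genuine work.

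Before treating them I would record the concatenation identity
\[
S_{\nu,D}\,S_{\bt',E}=S_{\nu\bt',\,E\cap\theta_{\bt'}(D)}\qquad(D\in\CI_\nu,\ E\in\CI_{\bt'}),
\]
which is the one ingredient not contained verbatim in Lemma~\ref{mul-s*s}. I would derive it from the definition of $S_{\af,A}$ together with the basic moves $S_{\af,A}=S_{\af,A}P_A$ (a special case of Lemma~\ref{mul-s*s}) and $P_AS_{\bt',E}=S_{\bt',\theta_{\bt'}(A)\cap E}$ (the generalized form of the lemma preceding Lemma~\ref{mul-s*s}): first reduce to the case $E\subseteq\theta_{\bt'}(D)$ by absorbing the source projection $P_D$, then compare $S_{\nu\bt',E}$ with its definitional factorization $S_{\nu,\,\theta_{\nu_2\cdots\nu_{|\nu|}}(W)}S_{\bt',E}$ and use $S_{\nu,\,\theta_{\nu_2\cdots\nu_{|\nu|}}(W)}P_D=S_{\nu,D}$. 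The monotonicity of the actions and the fact that each $\theta_\gamma$ is a Boolean homomorphism (so that $\theta_{\bt'}(C)\cap\theta_{\bt'}(D)=\theta_{\bt'}(C\cap D)$) are what make the index bookkeeping close up.

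With these tools each remaining case is a short calculation. When $\bt=\mu$ the inner factor is $P_{B\cap C}$, which I absorb into the two outer symbols via $S_{\af,A}P_{B\cap C}=S_{\af,A\cap B\cap C}$ and $P_{B\cap C}S_{\nu,D}^*=S_{\nu,D\cap B\cap C}^*$ (splitting $P_{B\cap C}=P_{B\cap C}^2$), yielding $S_{\af,A\cap B\cap C}S_{\nu,D\cap B\cap C}^*$. When $\mu=\bt\mu'$ the inner factor is $S_{\mu',C\cap\theta_{\mu'}(B)}$; concatenating it on the left with $S_{\af,A}$ by the identity above gives $S_{\af\mu',\theta_{\mu'}(A\cap B)\cap C}$, and reattaching $S_{\nu,D}^*$ produces the third line. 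When $\bt=\mu\bt'$ the inner factor is $S_{\bt',B\cap\theta_{\bt'}(C)}^*$; here I take adjoints to write $S_{\bt',\,\cdot}^*\,S_{\nu,D}^*=(S_{\nu,D}S_{\bt',\,\cdot})^*$, apply the concatenation identity inside, and simplify using $\theta_{\bt'}(C)\cap\theta_{\bt'}(D)=\theta_{\bt'}(C\cap D)$ to reach $S_{\af,A}S_{\nu\bt',\theta_{\bt'}(C\cap D)\cap B}^*$, matching the second line.

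The only genuinely non-routine step is establishing the concatenation identity, and with it keeping track that every element occurring as a second subscript really lies in the ideal $\CI_\gamma$ indexed by the relevant word, so that all the symbols $S_{\gamma,\cdot}$ are defined. This is where the containments defining $\CI_{\af\mu'}$ and $\CI_{\nu\bt'}$, together with the monotonicity of the $\theta_\gamma$, must be verified. Once that is in place, the three nontrivial cases fall out mechanically from Lemma~\ref{mul-s*s}.
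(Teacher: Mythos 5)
Your proof is correct and follows essentially the same route as the paper, which states Lemma~\ref{cal} without further argument as an immediate corollary of Lemma~\ref{mul-s*s}: one regroups the product around the middle factor $S_{\bt,B}^*S_{\mu,C}$, applies Lemma~\ref{mul-s*s} to it, and absorbs the result into the outer factors, exactly as you do. The concatenation identity $S_{\nu,D}S_{\bt',E}=S_{\nu\bt',\,E\cap\theta_{\bt'}(D)}$ that you isolate is the same factorization device already used inside the paper's proof of Lemma~\ref{mul-s*s}, so your write-up simply makes explicit what the paper leaves to the reader.
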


\begin{remark}
Let $\{P_A,\ S_{\alpha,B}: A\in\CB,\ \alpha\in\CL,\ B\in\CI_\alpha\}$
be a $(\CB,\CL,\theta,\CI_\alpha;\CJ)$-representation. We then have
that
\begin{align}
C^*(P_A,S_{\alpha,B})&=\overline{\operatorname{span}}\{
S_{\af,A}S_{\bt,B}^*: ~\af,\bt \in \CL^* ~\text{and}~ A \in \CI_\af, B
\in \CI_\bt\}\label{eq:2}\\
&=\overline{{\rm \operatorname{span}}}\{S_{\af,A}S_{\bt,A}^*: \af,\bt
\in \CL^* ~\text{and}~ A \in \CI_\af\cap \CI_\bt \}.\label{eq:3}
\end{align}
\end{remark}

\begin{proof}
It follows from Lemma \ref{cal} that the right-hand side of \eqref{eq:2} is
a $C^*$-subalgebra of $C^*(P_A,S_{\alpha,B})$, and since it contains
all the generators of $C^*(P_A,S_{\alpha,B})$, they must be equal. To
see the last equality, we only need to observe that
$S_{\af,A}S_{\bt,B}^*=S_{\af,A}P_{A\cap B}S_{\bt,B}^*=S_{\af,A \cap
B}S_{\bt,A \cap B }^*$  for $\af,\bt \in \CL^* $ and $ A \in \CI_\af,
B \in \CI_\bt.$
\end{proof}

\section{Examples}\label{examples}
The class of $C^*$-algebras of Boolean dynamical system described in Example \ref{BDS of cr-cd} below  was introduced in \cite{COP} to study labelled graph $C^*$-algebras from a more general point of view. 
 It is shown in \cite{COP} that the class of $C^*$-algebras of Boolean dynamical system contains  weakly left-resolving normal labelled graph $C^*$-algebras under some assumption.   We show in the following two examples that our $C^*$-algebras of generalized Boolean dynamical systems contains all  $C^*$-algebras of Boolean dynamical systems  and all weakly left-resolving normal labelled graph $C^*$-algebras.

\begin{ex}\label{BDS of cr-cd} 
Let $(\CB,\CL,\theta)$ be a Boolean dynamical system  with assumptions that $\theta_\af$ has compact range $\CR_{\theta_\af}$ and closed domain $\CD_{\theta_\af}$  for each $\af \in \CL$ (see \cite[Definition 3.3]{COP} or Remark \ref{compact range}). We denote the universal Cuntz--Krieger  Boolean $C^*$-algebra constructed in \cite{COP} by $C^*(s_\af, p_A)$. We then show that 
\[
C^*(\CB, \CL, \theta) \cong C^*(s_\af, p_A).
\]

We  check that
\[
\{p_A, s_\af p_B: A \in \CB, \af \in \CL ~\text{and}~ B \in \CR_\af \}	
\]
is a Cuntz-Krieger representation of $(\CB, \CL, \theta)$ in $C^*(s_\af, p_A)$:
\begin{enumerate}
\item[(i)] $p_A$'s clearly satisfy Definition \ref{def:representation of RGBDS}(i),
\item[(ii)] $p_A(s_\af p_B)=s_\af p_{\theta_\af(A)} p_B =(s_\af p_B) p_{\theta_\af(A)}$,
\item[(iii)] $(s_\af p_B)^*(s_{\af'}p_{B'})= p_Bs_\af^*s_{\af'}p_{B'}=\delta_{\af,\af'}p_Bp_{\CR_{\theta_\af}}p_{B'}=\delta_{\af,\af'}p_{B \cap B'}$,
\item[(iv)] $p_A= \sum_{\af \in \Delta_A} s_\af p_{\theta_\af(A)}s_\af^*=\sum_{\af \in \Delta_A} (s_\af p_{\theta_\af(A)})(s_\af p_{\theta_\af(A)})^*$ for all $A \in \CB_{reg}$.
\end{enumerate}
Then the universal property of $C^*(\CB, \CL, \theta)$ gives  a $*$-homomorphism 
\[
\phi: C^*(\CB, \CL, \theta) \to C^*(s_\af, p_A)	
\]
defined by 
\[
\phi(p_A)=p_A ~\text{and}~ \phi(s_{\af,B})=s_\af p_B	
\]
for all $A \in \CB, \af \in \CL$ and $B \in \CR_\af$. Since $\theta_\af(\CD_{\theta_\af})=\CR_{\theta_\af}$ for each $\af \in \CL$, we have $\CR_{\theta_\af} \in \CR_\af$ for each $\af \in \CL$. Thus it follows from the equation $s_\af=s_\af p_{\CR_{\theta_\af}}$  that the family $\{p_A, s_\af p_B : A \in \CB, \af \in \CL ~\text{and}~ B \in \CR_\af\}$ contains all generators of $C^*(s_\af,p_A)$. Thus $\phi$ is surjective.  Since $p_A \neq 0$ for all $A  \neq \emptyset$ and  $C^*(s_\af,p_A)$ admits a gauge action $\bt: \mathbb{T} \to \text{Aut}(C^*(s_\af,p_A))$ such that $\bt_z(p_A)=p_A$ and $\bt_z(s_\af p_B)=zs_\af p_B$ for every $A \in \CB$, $\af \in \CL$  and $B \in \CR_\af$, the gauge-invariant uniqueness theorem \ref{GIUT of GBDS} implies that $\phi$ is an isomorphism.
\end{ex}

\begin{ex}[Weakly left-resolving normal labelled spaces]\label{labelled graph} 
We refer the reader to \cite{BaCaPa2017, BP1,BP2, JKP} for the basic definitions and terminology of labelled graphs and their $C^*$-algebras. Let $(E, \CL)$ be a labelled graph over  $\CA$ consisting of a directed graph $E$ and a labelling map $\CL: E^1 \to \CA$ which is assumed to be onto.  We then consider  a weakly left-resolving normal labelled space  $(E,\CL,\CB)$ and put $C^*(E,\CL,\CB):=C^*(p_A, s_\af)$. Then $\CB$ is a Boolean algebra and for each $\af \in \CA$, the map $\theta_{\af}:\CB \to \CB$ defined by \begin{align}\label{labelled graph:action}\theta_\af(A):=r(A,\af)\end{align} is an action on $\CB$ (see \cite[Example 11.1]{COP}). Then the triple $(\CB, \CA, \theta)$ is a Boolean dynamical system. It is clear that $\CR_\af \subseteq \CI_{r(\af)} (=\{A \in \CB : A \subseteq r(\af)\})$  for each $\af \in \CL$. We claim that 
\[
C^*(E,\CL,\CB) \cong C^*(\CB, \CA,\theta, \CI_{r(\af)}).
\]
It it straightforward to check that 
\[
\{p_A, s_\af p_B: A \in \CB, \af \in \CA ~\text{and}~ B \in \mathcal{I}_{r(\alpha)}\}	
\]
is a Cuntz--Krieger representation of $(\CB, \CA,\theta,\CI_{r(\af)} )$. Then the universal property of $C^*(\CB, \CA,\theta, \CI_{r(\af)})$ gives a $*$-homomorphism 
\[
\phi: C^*(\CB, \CA,\theta, \CI_{r(\af)}) \to C^*(E,\CL,\CB)	
\]
defined by 
\[
\phi(p_A)=p_A ~\text{and}~ \phi(s_{\af,B})=s_\af p_B	
\]
for all $A \in \CB, \af \in \CA$ and $B \in \mathcal{I}_{r(\alpha)}$. Since $s_\af=s_\af p_{r(\af)}$, the family $\{p_A, s_\af p_B: A \in \CB, \af \in \CA ~\text{and}~ B \in \mathcal{I}_{r(\alpha)} \}$ generates $C^*(E, \CL, \CB)$, and hence, the map $\phi$ is onto. Applying the gauge-invariant uniqueness theorem \ref{GIUT of GBDS}, we conclude that  $\phi$ is an isomorphism.
\end{ex}

\begin{remark} 
We continue Example \ref{labelled graph}. We claim that 
\begin{enumerate}
\item If, in addition, $E$ has no source and $(E,\CL,\CB)$ is reciever set-finite, then we have $C^*(E, \CL, \CB) \cong C^*(\CB, \CA, \theta).$
\item In general, $C^*(E, \CL, \CB) $ is not isomorphic to $C^*(\CB, \CA, \theta).$
\item $C^*(E,\CL,\CB)$ is Morita equivalent to $ C^*(\CB, \CA,\theta) $.
\end{enumerate} 
\end{remark}

\begin{proof} 
(1): For each $\af \in \CA$, put
$\CR_\af=\{B \in \CB: B \subseteq r(A, \af) ~\text{for some}~ A \in \CB \}$. Then 
\[
\{p_A, s_\af p_B: A \in \CB, \af \in \CA ~\text{and}~ B \in \CR_\af \}
\]
is a Cuntz--Krieger representation of $(\CB, \CA, \theta)$. Clearly, $p_A$'s satisfy Definition \ref{def:representation of RGBDS} (i). We show that  the family satisfies Definition \ref{def:representation of RGBDS}(ii)-(iv):
\begin{enumerate}
\item[(ii)] $p_A(s_\af p_B)=s_\af p_{r(A,\af)} p_B =(s_\af p_B) p_{r(A,\af)}$,
\item[(iii)] $(s_\af p_B)^*(s_{\af'}p_{B'})= p_Bs_\af^*s_{\af'}p_{B'}=\delta_{\af,\af'}p_Bp_{r(\af)}p_{B'}=\delta_{\af,\af'}p_{B \cap B'}$,
\item[(iv)] $p_A= \sum_{\af \in  \CL(AE^1)} s_\af p_{r(A,\af)}s_\af^*=\sum_{\af \in  \Delta_A} (s_\af p_{r(A,\af)})(s_\af p_{r(A,\af)})^*$ for all $A \in \CB_{reg}$.
\end{enumerate}
Then the universal property of $C^*(\CB, \CA, \theta)$ gives  a $*$-homomorphism 
\[
\phi: C^*(\CB, \CA, \theta) \to C^*(p_A, s_\af p_B)	
\]
defined by 
\[
\phi(p_A)=p_A ~\text{and}~ \phi(s_{\af,B})=s_\af p_B	
\]
for all $A \in \CB, \af \in \CA$ and $B \in \CR_\af$. Since $p_A \neq 0$ for all $A \neq \emptyset$ and  $C^*(p_A, s_\af p_B)$ admits a gauge action $\bt: \mathbb{T} \to \text{Aut}(C^*(E, \CL, \CB))$ such that $\bt_z(p_A)=p_A$ and $\bt_z(s_\af p_B)=zs_\af p_B$ for every $A \in \CB$, $\af \in \CA$  and $B \in \CR_\af$, the gauge-invariant uniqueness theorems \ref{GIUT of GBDS} implies that $\phi$ is an isomorphism.

If $E$ has no sources and $(E, \CL,\CB)$ is reciever set-finite, then we have 
\[
r(\af)=r(s(\af), \af)= r(\cup_{i=1}^m [w_i]_1, \af)= \cup_{i=1}^m r([w_i]_1, \af)
\] 
for some $w_i \in s(\af)$ $(i=1, \cdots, m)$. The union is disjoint since $(E, \CL,\CB)$ is weakly left-resolving. We then see that 
\[
s_\af=s_\af p_{r(\af)}=\sum_{i=1}^m s_\af p_{r([w_i]_1, \af)}
\] 
for some $r([w_i]_1, \af) \in \CR_\af$. Thus, the family  $\{p_A, s_\af p_B: A \in \CB, \af \in \CA ~\text{and}~ B \in \CR_\af \} $ generates $C^*(E, \CL, \CB)$, and hence, the map $\phi$ becomes a $*$-homomorphism from $C^*(\CB, \CA, \theta)$ onto  $C^*(E, \CL, \CB)$. The gauge-invariant uniqueness theorems \ref{GIUT of GBDS} implies that $\phi$ is an isomorphism.

\vskip 0.5pc

(2): Let $E^0:=\mathbb{N}\times\{1,2\}$, $E^1:=\mathbb{N}$ and define $r,s:E^1\to E^0$  by $s(e)=(e,1)$, $r(e)=(e,2)$. Then $E:=(E^0,E^1,r,s)$ is the following directed graph.

\vskip 1.5pc 
\hskip 3cm
\xy  /r0.3pc/:(48.3,5)*+{\cdots };
 (0,0)*+{\bullet}="V0";
(10,0)*+{\bullet}="V1"; 
(20,0)*+{\bullet}="V2";
(30,0)*+{\bullet}="V3";
(40,0)*+{\bullet}="V4";
(0,10)*+{\bullet}="W0";
(10,10)*+{\bullet}="W1"; 
(20,10)*+{\bullet}="W2";
(30,10)*+{\bullet}="W3";
(40,10)*+{\bullet}="W4";
 "V0";"W0"**\crv{(0,0)&(0,10)}; ?>*\dir{>}\POS?(.5)*+!D{};
"V1";"W1"**\crv{(10,0)&(10,10)}; ?>*\dir{>}\POS?(.5)*+!D{};
"V2";"W2"**\crv{(20,0)&(20,10)}; ?>*\dir{>}\POS?(.5)*+!D{};
"V3";"W3"**\crv{(30,0)&(30,10)}; ?>*\dir{>}\POS?(.5)*+!D{};
"V4";"W4"**\crv{(40,0)&(40,10)}; ?>*\dir{>}\POS?(.5)*+!D{};
  (0.1,-2.5)*+{(1,1)};(10.1,-2.5)*+{(2,1)};
(20.1,-2.5)*+{(3,1)};(30.1,-2.5)*+{(4,1)};(40.1,-2.5)*+{(5,1)};
 (0.1,12.5)*+{(1,2)}; (10.1,12.5)*+{(2,2)};
 (20.1,12.5)*+{(3,2)}; (30.1,12.5)*+{(4,2)}; (40.1,12.5)*+{(5,2)};
 \endxy 
\vskip 1.5pc 

\noindent
Put $\mathcal{A}:=\{a\}$ and define $\mathcal{L}:E^1\to\mathcal{A}$
by $\mathcal{L}(e)=a$. Then we have the following labelled graph $(E, \CL)$. 

\vskip 1.5pc 
\hskip 3.4cm
\xy  /r0.3pc/:(48.3,5)*+{\cdots };
 (0,0)*+{\bullet}="V0";
(10,0)*+{\bullet}="V1"; 
(20,0)*+{\bullet}="V2";
(30,0)*+{\bullet}="V3";
(40,0)*+{\bullet}="V4";
(0,10)*+{\bullet}="W0";
(10,10)*+{\bullet}="W1"; 
(20,10)*+{\bullet}="W2";
(30,10)*+{\bullet}="W3";
(40,10)*+{\bullet}="W4";
 "V0";"W0"**\crv{(0,0)&(0,10)}; ?>*\dir{>}\POS?(.5)*+!D{};
"V1";"W1"**\crv{(10,0)&(10,10)}; ?>*\dir{>}\POS?(.5)*+!D{};
"V2";"W2"**\crv{(20,0)&(20,10)}; ?>*\dir{>}\POS?(.5)*+!D{};
"V3";"W3"**\crv{(30,0)&(30,10)}; ?>*\dir{>}\POS?(.5)*+!D{};
"V4";"W4"**\crv{(40,0)&(40,10)}; ?>*\dir{>}\POS?(.5)*+!D{};
(1.5,5)*+{a};(11.5,5)*+{a};(21.5,5)*+{a};(31.5,5)*+{a};(41.5,5)*+{a};
  \endxy 
\vskip 1.5pc 

\noindent
For $A,B\subseteq\mathbb{N}$, we write $(A,B)$ for the subset $A\times\{1\}\cup B\times\{2\}$ of $E^0$. Let $$\mathcal{B}:=\{(A,B):A\text{ is a finite subset of }\mathbb{N},\ B\text{ is a subset of }\mathbb{N}\text{ such that }B\text{ or }\mathbb{N}\setminus B\text{ is finite}\}.$$ Then  $(E,\mathcal{L},\mathcal{B})$ is a weakly left-resolving normal labelled space. Define $\theta_a:\mathcal{B}\to\mathcal{B}$ by $\theta_a((A,B)):=r((A,B),a)=(\emptyset,A)$. Then $(\mathcal{B},\mathcal{A},\theta)$ is a Boolean dynamical system such that $\mathcal{R}_a=\{(\emptyset,A):A\text{ is a finite subset of }\mathbb{N}\}$.

Let $\widehat{\mathbb{N}}$ be the one-point compactification of $\mathbb{N}$, and let $A$ be the $C^*$-algebra of $2\times 2$ matrices over $C(\widehat{\mathbb{N}})$. For $(A,B)\in\mathcal{B}$, let $$p_{(A,B)}= \begin{pmatrix}1_A&0\\0&1_B\end{pmatrix}.$$ For a finite subset $C$ of $\mathbb{N}$, let $$s_{a,(\emptyset,C)}=\begin{pmatrix}0&1_C\\0&0\end{pmatrix},$$ and let $$s_a= \begin{pmatrix}0&1\\0&0\end{pmatrix}.$$ Then it is straightforward to check that $$\{p_{(A,B)}:(A,B)\in\mathcal{B}\}\cup\{s_a\}$$ is a representation of $(E,\mathcal{L},\mathcal{B})$ and $$\{p_{(A,B)}:(A,B)\in\mathcal{B}\}\cup\{s_{a,(\emptyset,C)}:C\text{ is a finite subset of }\mathbb{N}\}$$ is a Cuntz-Krieger representation of $(\mathcal{B},\mathcal{A},\theta)$. The $C^*$-algebra generated by $$\{p_{(A,B)}:(A,B)\in\mathcal{B}\}\cup\{s_a\}$$ is $A$, while the $C^*$-algebra generated by $$\{p_{(A,B)}:(A,B)\in\mathcal{B}\}\cup\{s_{a,(\emptyset,C)}:C\text{ is a finite subset of }\mathbb{N}\}$$ is $$\left\{\begin{pmatrix}f_{11}&f_{12}\\f_{21}&f_{22}\end{pmatrix}\in A: f_{11},f_{12},f_{21}\in C_0(\mathbb{N})\right\}$$ (where we consider $C_0(\mathbb{N})$ to be an ideal of $C(\widehat{\mathbb{N}})$). This shows that $C^*(\mathcal{B},\mathcal{A},\theta)$ is not isomorphic to $C^*(E,\mathcal{L},\mathcal{B})$.

\vskip 0.5pc

(3): By Example \ref{labelled graph} and Proposition \ref{full hereditary}, we deduce that $C^*(E,\CL,\CB)$ is Morita equivalent to $ C^*(\CB, \CA,\theta) $.
\end{proof}

\section{$C^*$-algebras of  relative generalized Boolean dynamical systems are relative Cuntz-Pimsner algebras}\label{RGBDS is RCP}

In this section, we shall prove that for each relative generalized Boolean dynamical system  $(\CB,\CL,\theta,\CI_\alpha;\CJ)$,  
 there exists a unique universal $(\CB,\CL, \theta,\CI_\af; \CJ)$-representation by realizing  $C^*(\CB,\CL, \theta, \CI_\af;\CJ)$ as a relative Cuntz-Pimsner algebra.
We do that by closely imitating the approach used in \cite[Section
3]{BaCaPa2017}.

We first construct a $C^*$-correspondence of a generalized Boolean dynamical system  $(\CB,\CL,\theta,\CI_\alpha)$. Most of the results can be obtained by arguments similar to arguments used in \cite{BaCaPa2017}. For completenss, we provide detailed proof for the results which we need to modify compared to \cite{BaCaPa2017}.

\vskip 1pc

Let $(\CB, \CL,\theta)$ be a Boolean dynamical system. For each $A \in \CB$, we define 
\[
\chi_A:=1_{Z(A)} \in C_0(\widehat{\CB}).
\]
  Let $\CA(\CB,\CL,\theta)$ denote the $C^*$-subalgebra of $C_0(\widehat{\CB})$ generated by $\{\chi_A: A \in \CB\}$. We then have that   
\[
\CA(\CB,\CL,\theta)=\overline{\operatorname{\operatorname{span}}}\{\chi_A: A \in \CB\}.
\]

The following lemma will be frequently used throughout this section. The lemma can be proved in a way similar to have \cite[Lemmas 3.2 and 3.3]{BaCaPa2017} is proved. We therefore omit the proof.

\begin{lem}\label{prop: A(B,L)} Let $(\CB,\CL,\theta)$ be a Boolean dynamical system. 
\begin{enumerate}
\item Let $\{p_A: A \in \CB\}$ be a family of projections in a $C^*$-algebra $\CX$ such that $p_{A \cap B}=p_A p_B$, $p_{A \cup B}=p_A+p_B-p_{A \cap B}$ for all $A,B \in \CB$ and $p_{\emptyset}=0$.  Then there is a unique $*$-homomorphism $\phi: \CA(\CB,\CL,\theta) \to \CX$ such that $\phi(\chi_A)=p_A$ for all $A \in \CB$. 
\item If $\CI$ is an ideal of $\CA(\CB,\CL,\theta)$, then we have 
$\CI=\overline{\operatorname{span}}\{\chi_A: A \in \CB, \chi_A \in \CI\}.$
\end{enumerate}
\end{lem}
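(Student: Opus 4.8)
The plan is to reduce both statements to families of \emph{pairwise disjoint} sets, since for such sets the projections $\chi_A$ become mutually orthogonal and all the norms become transparent. First I would record the underlying refinement: given $A_1,\dots,A_n\in\CB$, set $U=A_1\cup\cdots\cup A_n$ and let $E_1,\dots,E_m$ be the atoms of the finite Boolean subalgebra of $\CI_U$ generated by $A_1,\dots,A_n$. Then the $E_k$ are nonempty and pairwise disjoint and each $A_i=\bigcup_{k\in T_i}E_k$. Since $\chi_{A\cap B}=\chi_A\chi_B$ and, for disjoint sets, $\chi_{A\cup B}=\chi_A+\chi_B$ (because $\chi_{A\cap B}=\chi_\emptyset=0$), the $\chi_{E_k}$ are pairwise orthogonal projections and $\chi_{A_i}=\sum_{k\in T_i}\chi_{E_k}$. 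Thus every element of $\operatorname{span}\{\chi_A:A\in\CB\}$ can be rewritten as $\sum_k d_k\chi_{E_k}$ with the $\chi_{E_k}$ pairwise orthogonal and nonzero (nonzero because $A\mapsto\chi_A$ is injective, so nonempty atoms give nonzero projections).

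For (1), I would exploit that the hypotheses on $\{p_A\}$ are \emph{the same} relations, so that for disjoint $E_k$ the $p_{E_k}$ are again pairwise orthogonal projections and $p_{A_i}=\sum_{k\in T_i}p_{E_k}$. Fixing a finite Boolean subalgebra $\CF\subseteq\CB$ with atoms $E_1,\dots,E_m$, the finite-dimensional commutative $C^*$-algebra $\operatorname{span}\{\chi_A:A\in\CF\}$ has the $\chi_{E_k}$ as its minimal projections, so the assignment $\chi_{E_k}\mapsto p_{E_k}$ determines a $*$-homomorphism $\phi_\CF$ into $\CX$ with $\phi_\CF(\chi_A)=p_A$ for all $A\in\CF$; being a $*$-homomorphism it is automatically contractive. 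Since these maps agree on overlaps (both send $\chi_A\mapsto p_A$) and the finite Boolean subalgebras are directed, they glue to a contractive $*$-homomorphism on $\operatorname{span}\{\chi_A:A\in\CB\}$, which extends by continuity to the desired $\phi$ on $\CA(\CB,\CL,\theta)=\overline{\operatorname{span}}\{\chi_A\}$. Uniqueness is clear because the $\chi_A$ generate, so the only real content is well-definedness, which is exactly what the passage to orthogonal atoms supplies.

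For (2), the inclusion $\supseteq$ is trivial as $\CI$ is closed. For $\subseteq$, fix $x\in\CI$ and $\epsilon>0$ and, using the refinement, choose $y=\sum_{k=1}^m d_k\chi_{E_k}$ with the $E_k$ pairwise disjoint, all $d_k\neq0$, and $\|x-y\|<\epsilon$. Split the indices into $S=\{k:\chi_{E_k}\in\CI\}$ and its complement, and set $y_1=\sum_{k\in S}d_k\chi_{E_k}\in\CI$. For each $k$, orthogonality gives $\chi_{E_k}y=d_k\chi_{E_k}$, whence $\|d_k\chi_{E_k}-\chi_{E_k}x\|=\|\chi_{E_k}(x-y)\|\leq\|x-y\|<\epsilon$; as $\chi_{E_k}x\in\CI$ this yields $\operatorname{dist}(d_k\chi_{E_k},\CI)<\epsilon$. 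The crucial point is then that the image of the projection $\chi_{E_k}$ in the quotient $\CA(\CB,\CL,\theta)/\CI$ is again a projection, hence of norm $0$ or $1$; so $\operatorname{dist}(\chi_{E_k},\CI)=1$ for $k\notin S$, forcing $|d_k|<\epsilon$ there. Because the surviving terms have disjoint supports, $\|y-y_1\|=\max_{k\notin S}|d_k|<\epsilon$, so $\|x-y_1\|<2\epsilon$ with $y_1\in\operatorname{span}\{\chi_A:A\in\CB,\ \chi_A\in\CI\}$; letting $\epsilon\to0$ finishes the proof.

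I expect the main obstacle to lie in (2): upgrading the crude estimate $\|x-y\|<\epsilon$ into control on the \emph{individual} atom coefficients $d_k$. This is precisely where the dichotomy ``a projection in a $C^*$-algebra has norm $0$ or $1$'' is used, applied in the quotient $\CA(\CB,\CL,\theta)/\CI$. By contrast, (1) is routine once disjointness is arranged, the orthogonality of the $\chi_{E_k}$ and $p_{E_k}$ dissolving both the well-definedness and the contractivity questions.
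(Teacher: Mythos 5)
Your proof is correct and takes essentially the same route as the argument the paper appeals to (it omits the proof, citing \cite[Lemmas 3.2 and 3.3]{BaCaPa2017}): decompose into the disjoint atoms of a finite Boolean subalgebra, use orthogonality of the resulting nonzero projections, and, for (2), use that a nonzero projection in the quotient $C^*$-algebra $\CA(\CB,\CL,\theta)/\CI$ has norm one. The only cosmetic difference is that in (1) you package well-definedness and contractivity as a directed gluing of finite-dimensional $*$-homomorphisms rather than as a direct norm estimate $\|\sum_k d_k p_{E_k}\|\le\max_k|d_k|=\|\sum_k d_k\chi_{E_k}\|$ on the dense span.
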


Let $\{\mathcal{I}_\alpha:\alpha\in\mathcal{L}\}$ be  a family of ideals of $\mathcal{B}$ such that $\mathcal{R}_\alpha\subseteq \mathcal{I}_\alpha$ for all $\alpha\in\mathcal{L}$. For each $\af \in \CL$, we let $X_\af$ denote the ideal of $\CA(\CB,\CL,\theta)$ generated by $\{\chi_A: A \in \CI_\af\}$, that is, 
\[
X_\alpha=\overline{\operatorname{\operatorname{span}}}\{\chi_A: A \in \CI_\af\}.
\] 
It is straightforward to see that $X_\af$ is a right Hilbert $\CA(\CB,\CL,\theta)$-module with right action given by the usual multiplication in $\CA(\CB,\CL,\theta)$ and inner product defined by $\left\langle x,y \right\rangle=x^*y$  for $x,y \in X_\alpha$. 

We define 
\begin{align*}
X(\CB,\CL,\theta, \CI_\af)&:=\bigoplus_{\alpha\in\mathcal{L}}X_\alpha\\
&=\biggl\{(x_\af)_{\af \in \CL} \in \prod_{\af \in \CL} X_\af: \sum_{\af \in \CL}x_\af^* x_\af ~\text{converges in}~\CA(\CB,\CL,\theta)\biggr\} 
\end{align*} 
to be the right Hilbert $\CA(\CB,\CL,\theta)$-module with inner product defined by 
\[\bigl\langle (x_\af)_{\af \in \CL}, (y_\af)_{\af \in \CL}\bigr\rangle:=\sum_{\af \in \CL}\bigl\langle x_\af,y_\af\bigr\rangle =\sum_{\af \in \CL} x_\af^*y_\af
\] 
and the right action given by $(x_\af)_{\af \in \CL} \cdot f :=(x_\af f)_{\af \in \CL}$ for $(x_\af)_{\af \in \CL}, (y_\af)_{\af \in \CL} \in X(\CB,\CL,\theta, \CI_\af)$ and $f \in \CA(\CB,\CL,\theta)$.  
It is easy to see that 
\begin{align*} 
X(\CB, \CL,\theta, \CI_\af)&=\overline{\operatorname{span}}\{e_{\af,A}: \af \in \CL ~\text{and}~A \in \CI_\af \}\\
&= \overline{\operatorname{\operatorname{span}}}\{e_{\af, A}: \af \in \CL ~\text{and}~A \in F ~\text{for some}~ F \in \CF\},
\end{align*}
where $e_{\af,A}:=(\delta_{\af,\bt}\chi_{A})_{\bt \in \CL} \in \oplus_{\af \in \CL} X_\af$ and $\CF$ is the collection of finite subsets $F$ of mutually disjoint elements in $\CI_\af$. 

We then have a unique $*$-homomorphism $\phi_\af: \CA(\CB,\CL,\theta) \to X_\af$ defined by $$\phi_\af(\chi_A)=\chi_{\theta_\af(A)}$$ for every $A \in \CB$ and for each $\af \in \CL$ (see \cite[Lemma 3.4]{BaCaPa2017}). If we define a map 
\[
\phi:\CA(\CB,\CL,\theta) \to \mathfrak{L}(X(\CB,\CL,\theta, \CI_\af))
\] 
by 
\[
\phi(f)((x_\alpha)_{\alpha\in\mathcal{L}})=(\phi_\alpha(f)x_\alpha )_{\alpha\in\mathcal{L}}
\]
for $f \in \CA(\CB,\CL,\theta)$ and $(x_\af) \in X(\CB,\CL,\theta, \CI_\af)$,   where $ \mathfrak{L}(X(\CB,\CL,\theta, \CI_\af))$ denotes the $C^*$-algebra of adjointable operators on $X(\CB,\CL,\theta, \CI_\af)$, then the map $\phi$ is a $*$-homomorphism (\cite[Lemma 3.6]{BaCaPa2017}). Thus, $X(\CB,\CL,\theta, \CI_\af)$ is a $C^*$-correspondence over $\CA(\CB,\CL,\theta)$.

\subsection{Relative Cuntz-Pimsner algebras of generalized Boolean dynamical systems}

Let $\mathfrak{K}(X(\CB,\CL,\theta, \CI_\af))$ denote the ideal of $\mathfrak{L}(X(\CB,\CL,\theta, \CI_\af))$ consisting of generalized compact operators, that is, 
\begin{align}\label{compact} 
\mathfrak{K}(X(\CB,\CL,\theta, \CI_\af))=\overline{\operatorname{\operatorname{span}}}\{\Theta_{x,y} \in\mathfrak{L}(X(\CB,\CL,\theta, \CI_\af)):x,y \in X(\CB,\CL,\theta, \CI_\af) \},
\end{align}
where $\Theta_{x,y}$ is the operator that maps $z$ to $x\langle y,z\rangle$. As in \cite[Definition 1.1]{FMR2003}, we define an ideal $J(X(\CB,\CL,\theta, \CI_\af))$ of $\CA(\CB,\CL,\theta)$ by
\[
J(X(\CB,\CL,\theta, \CI_\af)):=\phi^{-1}(\mathfrak{K}(X(\CB,\CL,\theta, \CI_\af))).
\]
We also define an ideal $J_{X(\CB,\CL,\theta, \CI_\af)}$ of $\CA(\CB,\CL,\theta)$ by 
\begin{align*}
J_{X(\CB,\CL,\theta, \CI_\af)}&:= J(X(\CB,\CL,\theta, \CI_\af))\cap \ker(\phi)^{\perp} \\
&=\phi^{-1}(\mathfrak{K}(X(\CB,\CL,\theta, \CI_\af))) \cap \ker(\phi)^{\perp}
\end{align*}
(\cite[Definition 3.2]{Ka2004c}).

We first characterize the ideals $J(X(\CB,\CL,\theta, \CI_\af)) $ and  
$J_{X(\CB,\CL,\theta, \CI_\af)}$ as follows.

\begin{lem}\label{Jx}  
Let $X(\CB,\CL,\theta, \CI_\af)$ be the $C^*$-correspondence over $\CA(\CB,\CL,\theta)$.
\begin{enumerate}
\item $J(X(\CB,\CL,\theta, \CI_\af))
=\overline{\operatorname{\operatorname{span}}}\{\chi_A: A \in \CB ~\text{such that}~\ld_A < \infty \}.$
\item $J_{X(\CB,\CL,\theta, \CI_\af)}=\overline{\operatorname{\operatorname{span}}}\{\chi_A: A \in \CB_{reg}\}.$
\end{enumerate}
\end{lem}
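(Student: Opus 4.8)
The plan is to characterize when $\phi(\chi_A)$ is a generalized compact operator, and then separately when it avoids $\ker(\phi)^\perp$. For part (1), I would first compute $\phi(\chi_A)$ explicitly: since $\phi(\chi_A)((x_\bt)_{\bt\in\CL}) = (\phi_\bt(\chi_A)x_\bt)_{\bt\in\CL} = (\chi_{\theta_\bt(A)}x_\bt)_{\bt\in\CL}$, the operator $\phi(\chi_A)$ acts coordinatewise as multiplication by $\chi_{\theta_\bt(A)}$ on the summand $X_\bt$. Note that $\theta_\bt(A)\ne\emptyset$ exactly when $\bt\in\Delta_A$, so $\phi(\chi_A)$ is supported on the finitely or infinitely many coordinates indexed by $\Delta_A$. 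The key observation is that on each summand $X_\bt$ (for $\bt\in\Delta_A$), the operator of multiplication by $\chi_{\theta_\bt(A)}$ is a compact operator $\Theta_{e_{\bt,\theta_\bt(A)},\,e_{\bt,\theta_\bt(A)}}$-type element, since $X_\bt$ is an ideal acting on itself and the relevant projection lies inside; here I would use that $\chi_{\theta_\bt(A)} \in X_\bt$ because $\theta_\bt(A)\in\CR_\bt\subseteq\CI_\bt$.

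The heart of the argument is that a coordinatewise-defined operator on the direct sum $\bigoplus_\bt X_\bt$ is compact precisely when it is supported on finitely many coordinates (up to the norm closure), each coordinate contribution being compact. Thus I expect $\phi(\chi_A)\in\mathfrak K(X(\CB,\CL,\theta,\CI_\af))$ if and only if $\ld_A=|\Delta_A|<\infty$. For the forward direction I would approximate $\phi(\chi_A)$ by finite-rank operators and show that if $\Delta_A$ were infinite, the ``tail'' contributions over $\bt\in\Delta_A$ could not be made small, contradicting compactness; the orthogonality of the summands $X_\bt$ makes this a clean estimate. For the reverse direction, when $\Delta_A$ is finite I would write $\phi(\chi_A)$ explicitly as a finite sum of rank-one operators $\Theta_{x,y}$ built from the $e_{\bt,\theta_\bt(A)}$, using $\chi_A \in J(X)$. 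Combined with Lemma~\ref{prop: A(B,L)}(2), which tells me every element of the ideal $J(X(\CB,\CL,\theta,\CI_\af))$ is a closed span of the $\chi_A$ it contains, this identifies $J(X)=\overline{\operatorname{span}}\{\chi_A : \ld_A<\infty\}$.

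For part (2), I would first compute $\ker(\phi)$ and its orthogonal complement. The kernel consists of those $\chi_A$ with $\phi(\chi_A)=0$, i.e.\ $\chi_{\theta_\bt(A)}=0$ for all $\bt$, which means $\theta_\bt(A)=\emptyset$ for all $\bt\in\CL$, i.e.\ $\Delta_A=\emptyset$ (equivalently $\ld_A=0$). Then $\ker(\phi)^\perp$ is the complementary ideal, and intersecting with $J(X)$ from part (1) should single out exactly those $A$ for which every nonempty ``piece'' of $A$ has $0<\ld_B<\infty$ — which is precisely the definition of $A$ being regular. Here I expect to need Lemma~\ref{prop: A(B,L)}(2) again to reduce the computation of the ideal intersection to the generators $\chi_A$, and the definition of $\CB_{reg}$ via the condition $0<\ld_B<\infty$ for all $\emptyset\ne B\in\CI_A$.

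The main obstacle I anticipate is the careful handling of the orthogonal complement $\ker(\phi)^\perp$ at the level of the generating projections: it is not automatic that $\chi_A\in\ker(\phi)^\perp$ is equivalent to $\ld_B>0$ for all nonempty subsets $B$ of $A$, and getting this right requires understanding that $\ker(\phi)^\perp$ contains $\chi_A$ iff $\chi_A$ is orthogonal to every $\chi_C$ with $\Delta_C=\emptyset$, which after using the lattice structure of $\CB$ translates into ``$A$ has no nonempty singular-in-the-$\ld=0$-sense subset.'' Matching this up cleanly with the regularity condition $0<\ld_B<\infty$ for all $\emptyset\ne B\subseteq A$ — where the finiteness comes from part (1) and the positivity from the complement — is the delicate bookkeeping step, and I would treat it carefully using that $\CB/\ker$ and ideal intersections respect the span decomposition from Lemma~\ref{prop: A(B,L)}.
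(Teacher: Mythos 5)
Your proposal is correct and follows essentially the same route as the paper: reduce to the generators $\chi_A$ via Lemma~\ref{prop: A(B,L)}(2), prove $\phi(\chi_A)\in\mathfrak{K}(X(\CB,\CL,\theta,\CI_\af))$ iff $\ld_A<\infty$ by writing $\phi(\chi_A)=\sum_{\af\in\Delta_A}\Theta_{e_{\af,\theta_\af(A)},e_{\af,\theta_\af(A)}}$ in the finite case and using the norm-one diagonal contributions $\chi_{\theta_\af(A)}$ against orthogonality of the summands in the infinite case, then compute $\ker(\phi)$ as the span of $\chi_B$ with $\Delta_B=\emptyset$ and identify its annihilator on generators with the condition $\ld_B>0$ for all $\emptyset\ne B\in\CI_A$. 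The delicate step you flag (matching $\chi_A\in\ker(\phi)^\perp$ with the positivity half of regularity) is exactly the set-equality argument the paper carries out, so no gap remains.
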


\begin{proof} 
(1): By Lemma \ref{prop: A(B,L)}(2), we only need to show that for $A \in \CB$,  
\begin{align*}
\phi(\chi_A) \in \mathfrak{K}(X(\CB,\CL,\theta, \CI_\af)) & \iff \ld_A < \infty.
\end{align*}

For the "if" direction, choose $A \in \CB$ such that $\ld_A < \infty$. We then have
\begin{align*}
\phi(\chi_A)= \sum_{\af \in \Delta_A} \Theta_{e_{\af,\theta_\af(A)}, e_{\af, \theta_\af(A)}} \in \mathfrak{K}(X(\CB,\CL,\theta, \CI_\af)).
\end{align*}

For the "only if" direction, we first claim that for each $ \eta \in \mathfrak{K}(X(\CB,\CL,\theta, \CI_\af))$, the set $\{\af \in \CL: \| \left\langle e_{\af,\theta_\af(A)}, \eta (e_{\af,\theta_\af(A)})\right\rangle\| \geq 1\}$ is finite. By (\ref{compact}), it is enough to check that for $\eta=\theta_{x,y}$. For each $\af \in \CL$ and $x=(x_\af), ~y=(y_\af) \in X(\CB,\CL,\theta, \CI_\af)$, it is straightforward to check that $ \left\langle e_{\af,\theta_\af(A)}, \theta_{x,y} (e_{\af, \theta_\af(A)})\right\rangle = x_{\af}y_{\af}^*\chi_{\theta_\af(A)}$. Then our claim follows since $\|x_\af y_\af^*\| \geq 1$ for only a finitely many $\af$'s.

Now choose $A \in \CB $ so that $\phi(\chi_A) \in \mathfrak{K}(X(\CB,\CL,\theta, \CI_\af))$. Then the set $\Delta_A=\{\af \in \CL: \theta_\af(A) \neq \emptyset \}$ must be finite because $ \| \left\langle e_{\af,\theta_\af(A)}, \phi(\chi_A)(e_{\af,\theta_\af(A)})\right\rangle \|= \| \chi_{\theta_\af(A)}\|=1$. 

\vskip 0.5pc

(2): It is enough to prove that for $A \in \CB$, we have $\chi_A \in J_{X(\CB,\CL,\theta, \CI_\af)}$ $\iff$ $A \in \CB_{reg}$. We first cliam that 
\begin{align}\label{perp}
\ker(\phi)^\perp=\overline{\operatorname{\operatorname{span}}}\{\chi_A: A \in \CB ~\text{such that}~ 0<\ld_B  ~\text{for any}~ \emptyset \neq B \in \CI_A\}.
\end{align}
Since $\ker(\phi)= \cap_{\af \in \CL} \ker(\phi_\af)$, we see that 
\begin{align*} 
\ker(\phi) &= \overline{\operatorname{span}}\{\chi_B: B \in \CB ~\text{with}~\theta_\af(B)=\emptyset  ~\text{for all}~ \af \in \CL\} \\
&= \overline{\operatorname{span}}\{\chi_B: B \in \CB ~\text{with}~ \Delta_B=\emptyset \}.
\end{align*}
It then follows that  
\[
\ker(\phi)^\perp =\overline{\operatorname{span}}\{\chi_A: A \cap B=\emptyset ~\text{for all}~ B \in \CB ~\text{with}~ \Delta_B=\emptyset\}.
\]
We prove (\ref{perp}) by showing that 
\begin{multline*}  
\{A \in \CB :A \cap B=\emptyset ~\text{for all}~ B \in \CB ~\text{with}~ \Delta_B=\emptyset \} \\
=\{A \in \CB : 0<\ld_B  ~\text{for any}~ \emptyset \neq B \in \CI_A\}.
\end{multline*}

For the inclusion $(\subseteq)$, we choose $\emptyset \neq B \in \CI_A$. Then $\emptyset \neq B= A \cap B$, so $\Delta_B \neq \emptyset$, which means that $\ld_B > 0$. For the reverse inclusion $(\supseteq)$, choose $B \in \CB$ with $\Delta_B = \emptyset$.   Since $ \theta_\af(A \cap B) \subseteq \theta_\af(B) =\emptyset$ for all $\af \in \CL$, we have $\Delta_{A\cap B}=\emptyset$, meaning $\ld_{A \cap B}=0$.  Thus  $A \cap B = \emptyset$. 

Since for $A \in \CB$, it is obvious that $\ld_A < \infty \iff \ld_B < \infty ~\text{for all}~ \emptyset \neq B \in \CI_A $. Then using the result (1) and the equation (\ref{perp}), we have our result.
\end{proof}

Recall that a {\em (Toeplitz) representation} (\cite{FR} or \cite[Definition 2.1]{Ka2004c}) of the $C^*$-correspondence $X(\CB,\CL,\theta, \CI_\af)$ over $ \CA(\CB,\CL,\theta)$ on a $C^*$-algebra $\CX$ is a pair $(\pi, t)$ where $\pi: \CA(\CB,\CL,\theta) \to \CX$ is a $*$-homomorphism and $t: X(\CB,\CL,\theta, \CI_\af) \to \CX$ is a linear map satisfying
\begin{enumerate}
\item $t(x)^*t(y)=\pi(\left\langle x,y\right\rangle)$ for $x,y \in X(\CB,\CL,\theta, \CI_\af) $,
\item $\pi(f)t(x)=t(\phi(f)x)$ for $f \in \CA(\CB,\CL,\theta) $ and $x \in X(\CB,\CL,\theta, \CI_\af) $.
\end{enumerate}
Given a representation $(\pi,t)$, there is a $*$-homomorphism $\psi_t: \mathfrak{K}(X(\CB,\CL,\theta, \CI_\af)) \to \CX $ such that 
\[
\psi_t(\Theta_{x,y})=t(x)t(y)^*
\]
for $x,y \in X(\CB,\CL,\theta, \CI_\af)$. Let $K$ be an ideal in $J(X(\CB,\CL,\theta, \CI_\af))$. A representation $(\pi,t)$ is called {\em $K$-coisometric} if it satisfies that
\[
\pi(f)=\psi_t(\phi(f))
\]
for all $f \in K$ (\cite[Theorem 2.19]{MS1998} or \cite[Definition 1.1]{FMR2003}). The {\em relative Cuntz-Pimsner algebra $\CO_{(K, X(\CB,\CL,\theta, \CI_\af))}$ determined by the ideal $K$} is the $C^*$-algebra generated by a universal $K$-coisometric representation of $X(\CB,\CL,\theta, \CI_\af)$.

\begin{remark} 
When $K$ is the zero ideal of $J(X(\CB,\CL,\theta, \CI_\af))$, the algebra $\CO_{(K,X(\CB,\CL,\theta, \CI_\af))}$ is the Toeplitz algebra $\CT_{X(\CB,\CL,\theta, \CI_\af)}$. 
\end{remark}

\begin{remark}\label{ideal-bijective} 
For an ideal $\CJ$ of $\CB$, we let $K_\CJ$ denote the ideal of $\CA(\CB,\CL,\theta)$ generated by $\{\chi_A: A \in \CJ\}$, that is,   $$K_\CJ:=\overline{\operatorname{span}}\{\chi_A : A \in \CJ \}.$$ Then the map $\CJ \mapsto K_\CJ$ is a bijective map between the set of all ideals of $\CB$ and the set of all ideals of $\CA(\CB,\CL,\theta)$: For  an ideal $K$ of $\CA(\CB,\CL,\theta)$, we define $\CJ_K:=\{A \in \CB: \chi_A \in K\}$. Then it is easy to check that $\CJ_K$ is an ideal of $\CB$ and $K_{\CJ_K}=\overline{\operatorname{span}}\{\chi_A: A \in \CJ_K\}=K$ since $K=\overline{\operatorname{span}}\{\chi_A: A \in \CB, \chi_A \in K \}$ by Lemma \ref{prop: A(B,L)}(2). Thus the map is onto. The injectivity of the map is  rather obvious since we have $\CJ_{K_\CJ}=\CJ$. Note that $K \mapsto \CJ_K$ is the inverse map of $\CJ \mapsto K_\CJ$.
\end{remark}

The following theorem asserts that for a relative generalized Boolean dynamical system $(\CB,\CL,\theta,\CI_\af;\CJ)$, there exists a universal  $(\CB,\CL,\theta, \CI_\af;\CJ)$-representation. 

\begin{thm}\label{universal property:RGBDS} 
Let $(\CB,\CL,\theta,\CI_\af;\CJ)$ be a relative generalized Boolean dynamical system. Then there is a one-to-one correspondence between $K_{\CJ}$-coisometric representations of $X(\CB,\CL,\theta, \CI_\af)$ and $(\CB,\CL,\theta, \CI_\af ; \CJ)$-representations that takes a $K_\CJ$-coisometric representation $(\pi,t)$ of $X(\CB,\CL,\theta, \CI_\af)$ to the  $(\CB,\CL,\theta, \CI_\af ; \CJ)$-representation $\{\pi(\chi_A), t(e_{\af,B}): A \in  \CB, \af \in \CL  ~\text{and}~ B \in \CI_\af \}$.
 Thus, we have $ C^*(\CB,\CL,\theta,  \CI_\af;\CJ)  \cong \CO_{(K_\CJ, X(\CB,\CL,\theta, \CI_\af))}$.
\end{thm}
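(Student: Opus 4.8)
The plan is to produce explicit assignments in both directions between $K_\CJ$-coisometric representations of the $C^*$-correspondence $X(\CB,\CL,\theta,\CI_\af)$ and $(\CB,\CL,\theta,\CI_\af;\CJ)$-representations, to check that each assignment lands in the correct class, and to verify that the two are mutually inverse. The isomorphism of $C^*$-algebras will then follow by transporting the universal property: a $(\CB,\CL,\theta,\CI_\af;\CJ)$-representation and its partner $K_\CJ$-coisometric representation generate the same $C^*$-subalgebra, so the universal objects correspond.

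First I would treat the forward direction. Given a $K_\CJ$-coisometric representation $(\pi,t)$, set $P_A:=\pi(\chi_A)$ and $S_{\af,B}:=t(e_{\af,B})$; then $P_A$ is a projection because $\chi_A$ is, and $S_{\af,B}$ is a partial isometry once (iii) is known. Relation (i) of Definition~\ref{def:representation of RGBDS} is immediate from applying the $*$-homomorphism $\pi$ to the identities $\chi_\emptyset=0$, $\chi_{A\cap A'}=\chi_A\chi_{A'}$, $\chi_{A\cup A'}=\chi_A+\chi_{A'}-\chi_{A\cap A'}$. Relation (iii) follows from representation axiom (1) together with $\langle e_{\af,B},e_{\af',B'}\rangle=\delta_{\af,\af'}\chi_{B\cap B'}$. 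For (ii), I would compute $\phi(\chi_A)e_{\af,B}=e_{\af,\theta_\af(A)\cap B}$ and $e_{\af,B}\cdot\chi_A=e_{\af,B\cap A}$, so that axiom (2) and the standard derived identity $t(x)\pi(f)=t(x\cdot f)$ yield precisely the equivalent conditions (b) and (c) of the Remark following Definition~\ref{def:representation of RGBDS}, hence (ii). Finally, for (iv): if $A\in\CJ\subseteq\CB_{reg}$ then $\ld_A<\infty$, so as in the proof of Lemma~\ref{Jx} we have the finite expansion $\phi(\chi_A)=\sum_{\af\in\Delta_A}\Theta_{e_{\af,\theta_\af(A)},e_{\af,\theta_\af(A)}}$; applying $\psi_t$ and invoking $K_\CJ$-coisometry ($\pi(\chi_A)=\psi_t(\phi(\chi_A))$) gives $P_A=\sum_{\af\in\Delta_A}S_{\af,\theta_\af(A)}S_{\af,\theta_\af(A)}^*$.

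The reverse direction is where the real work lies. Starting from a $(\CB,\CL,\theta,\CI_\af;\CJ)$-representation $\{P_A,S_{\af,B}\}$ in $\CX$, Lemma~\ref{prop: A(B,L)}(1) at once produces the $*$-homomorphism $\pi$ with $\pi(\chi_A)=P_A$. The delicate point is building $t$. For each fixed $\af$ I would define a map on the algebraic span of $\{\chi_B:B\in\CI_\af\}\subseteq X_\af$ by $\chi_B\mapsto S_{\af,B}$, the key estimate being the $C^*$-computation, obtained by expanding via (iii),
\[
\Bigl\|\textstyle\sum_i c_i S_{\af,B_i}\Bigr\|^2=\bigl\|\pi(x^*x)\bigr\|\le\|x\|^2,\qquad x:=\textstyle\sum_i c_i\chi_{B_i},
\]
which simultaneously shows the map is well defined (independent of the representative of an element of $X_\af$) and contractive, hence extends to a map $t_\af\colon X_\af\to\CX$. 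I would then assemble these into $t\colon X(\CB,\CL,\theta,\CI_\af)\to\CX$ by $t((x_\af)_\af):=\sum_\af t_\af(x_\af)$; orthogonality of ranges for distinct labels (from $S_{\af,B}^*S_{\bt,B'}=0$ when $\af\neq\bt$) together with $\|\sum_{\af\in F}t_\af(x_\af)\|^2=\|\pi(\sum_{\af\in F}x_\af^*x_\af)\|$ guarantees convergence of the sum, since $\sum_\af x_\af^*x_\af$ converges in $\CA(\CB,\CL,\theta)$. Verifying that $(\pi,t)$ is a representation (axiom (1) from (iii); axiom (2) by checking $\pi(\chi_A)t(e_{\af,B})=t(\phi(\chi_A)e_{\af,B})$ on generators and extending) and is $K_\CJ$-coisometric (again via the finite expansion of $\phi(\chi_A)$ for $A\in\CJ$ and (iv), extended by continuity over $K_\CJ=\overline{\operatorname{span}}\{\chi_A:A\in\CJ\}$) is then routine.

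To conclude, I would observe that the two assignments agree on the generators $\{\chi_A\}$ of $\CA(\CB,\CL,\theta)$ and on the spanning set $\{e_{\af,B}\}$ of $X(\CB,\CL,\theta,\CI_\af)$, so by linearity and continuity they are mutually inverse. Since each $(\CB,\CL,\theta,\CI_\af;\CJ)$-representation and its partner generate the same $C^*$-algebra, the universal $K_\CJ$-coisometric representation of $X(\CB,\CL,\theta,\CI_\af)$ corresponds to a universal $(\CB,\CL,\theta,\CI_\af;\CJ)$-representation $\{p_A,s_{\af,B}\}$; this both establishes existence of the universal representation and yields $C^*(\CB,\CL,\theta,\CI_\af;\CJ)\cong\CO_{(K_\CJ,X(\CB,\CL,\theta,\CI_\af))}$. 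The hard part will be the reverse direction, and concretely the construction of the well-defined bounded linear map $t$ from the partial isometries $S_{\af,B}$ and the convergence of $t((x_\af)_\af)=\sum_\af t_\af(x_\af)$ on the direct-sum module; everything else reduces to the correspondence axioms and the relations already verified.
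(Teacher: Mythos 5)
Your proposal is correct and takes essentially the same route as the paper: the identical forward assignment (with condition (iv) obtained from $K_\CJ$-coisometry and the finite expansion $\phi(\chi_A)=\sum_{\af\in\Delta_A}\Theta_{e_{\af,\theta_\af(A)},e_{\af,\theta_\af(A)}}$), the reverse construction of $\pi$ from Lemma~\ref{prop: A(B,L)}(1) and of $t$ from the key $C^*$-estimate $\|t(x)\|^2=\|\pi(\langle x,x\rangle)\|$, and the conclusion via the universal properties. The only organizational difference is that you build $t$ label-by-label (maps $t_\af$ on each $X_\af$, then summed using orthogonality of ranges and a convergence argument), whereas the paper defines $t$ directly on finite sums over the spanning set of mutually disjoint elements and extends by continuity; both hinge on the same inner-product computation.
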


\begin{proof} 
Let $(\pi,t)$ be a $K_\CJ$-coisometric representation of $X(\CB,\CL, \theta, \CI_\af)$. We show that 
\[
\{\pi(\chi_A),  t(e_{\af,B}): A \in \CB, \af \in \CL ~\text{and}~ B \in \CI_\af \}
\]
is a  $(\CB,\CL,\theta, \CI_\af;\CJ)$-representation: It is straightforward to check that $\pi(\chi_A)$'s satisfy Definition \ref{def:representation of RGBDS}(i). For any $\af, \af' \in \CL$, $B \in \CI_\af$ and $B' \in \CI_{\af'}$, we observe for Definition \ref{def:representation of RGBDS}(ii) and (iii) that 
\[
\pi(\chi_A)t(e_{\af,B}) =t(\phi(\chi_A)e_{\af,B})
=t(e_{\af, B}\chi_{\theta_\af(A)})=t(e_{\af, B})\pi(\chi_{\theta_\af(A)})
\]
and that
\[
t(e_{\af,B})^*t(e_{\af',B'})
=\pi(\left\langle e_{\af,B},e_{\af',B'}\right\rangle)  
= \delta_{\af,\af'}\pi(\chi_{B \cap B'}). 
\]
For Definition \ref{def:representation of RGBDS}(iv), choose $A \in \CJ$. Then $\chi_A \in K_\CJ$. Hence, it follows that 
\begin{align*}
\pi(\chi_A)&=\psi_t(\phi(\chi_A)) \\
&=\psi_t(\sum_{\af \in \Delta_A}\Theta_{e_{\af,\theta_\af(A)}, e_{\af,\theta_\af(A)}})\\
&=\sum_{\af \in \Delta_A}t(e_{\af,\theta_\af(A)})t(e_{\af,\theta_\af(A)})^*. 
\end{align*}
Thus, $\{\pi(\chi_A), t(e_{\af,B})\}$ is a  $(\CB,\CL,\theta, \CI_\af;\CJ)$-representation.

For the opposite direction, suppose that $\{P_A, S_{\af,B}: A \in \CB, \af \in \CL ~\text{and}~ B \in \CI_\af \}$ is a  $(\CB,\CL,\theta, \CI_\af;\CJ)$-representation in a $C^*$-algebra $\CX$. Since we have  $\CA(\CB,\CL,\theta)=\overline{\operatorname{span}}\{\chi_A: A \in \CB\}$ and 
\begin{align}\label{X-span} 
X(\CB,\CL, \theta, \CI_\af)=\overline{\operatorname{span}}\{e_{\af,
A}: \af \in \CL ~\text{and}~A \in F ~\text{for some}~ F \in \CF\},
\end{align} 
where $\CF$ is the collection of finite subsets $F$ of mutually disjoint elements in $\CI_\af$, there can be at most one representation $(\pi,t)$ of $X(\CB,\CL,\theta, \CI_\af)$ such that $\pi(\chi_A)=P_A$  and $t(e_{\af,B})=S_{\af,B}$ for $A \in \CB$, $\af \in \CL$ and $B \in \CI_\af$. We shall construct such a representation $(\pi,t)$.

As \cite[Lemma 3.2]{BaCaPa2017}, we can see that there is a unique $*$-homomorphism 
\[
\pi: \CA(\CB,\CL,\theta) \to \mathcal{X}~\text{given by}~ \pi(\chi_A)=P_A
\] 
for $A \in \CB$. We construct a linear map $t: X(\CB,\CL,\theta, \CI_\af) \to  \CX $. Given a finite subset $F$ in $\CF$ of mutually disjoint elements in $\CI_\af$ and a finite subset $F'$ of $\CL$, we define 
\[
t \Big(\sum_{\af \in F', B \in \CF } c_{\af,B}e_{\af,B} \Big):=\sum_{\af \in F', B \in F } c_{\af,B}S_{\af,B},\]
where $c_{\af,B} \in \C$. Then we see that 
\begin{align*}  
t &\Big( \sum_{\af \in F', B \in F} c_{\af,B} e_{\af,B} \Big)^*t \Big( \sum_{\af' \in F', B' \in F}  d_{\af',B'} e_{\af', B'}\Big) \\
&= \Big(  \sum_{\af \in F', B \in F} c_{\af,B} S_{\af,B}\Big)^* \Big( \sum_{\af' \in F', B' \in F} d_{\af', B'} S_{\af',B'} \Big)\\
&=\sum_{\af \in F' ~\text{and}~ B,B' \in  F} \overline{c_{\af,B}}d_{\af, B'}  S_{\af,B}^*S_{\af,B'}\\
&= \sum_{\af \in F' ~\text{and}~ B \in  F} \overline{c_{\af,B}}d_{\af, B}  P_B\\
&=\sum_{\af \in F' ~\text{and}~ B,B' \in F} \overline{c_{\af,B}}d_{\af, B'}  \pi(\chi_{B\cap B'}) \\
&=\sum_{\af \in F' ~\text{and}~ B,B' \in F} \overline{c_{\af,B}}d_{\af, B'}  \pi(\chi_B)\pi(\chi_{B'})  \\
&=\pi \Big( \bigl\langle   \sum_{\af \in F', B \in F} c_{\af,B} e_{\af,B} ,  \sum_{\af' \in F', B' \in F}  d_{\af',B'} e_{\af', B'} \bigr\rangle\Big).
\end{align*}
Thus, $t$ extends to a linear map from $X(\CB,\CL,\theta, \CI_\af)$ to $ \CX$ which satisfies 
\[
t(x)^*t(y)=\pi(\left\langle x,y\right\rangle)
\] 
for all $x, y \in X(\CB,\CL,\theta, \CI_\af)$. We next show that we have 
\[
\pi(f)t(x)=t(\phi(f)x)
\]  
for $f \in \CA(\CB,\CL, \theta)$ and $x \in X(\CB,\CL,\theta, \CI_\af)$. For $A \in \CB$,  $\af \in \CL$ and $B \in \CI_\af$, we have 
\[\pi(\chi_A)S_{\af, B}=P_A S_{\af,B}= S_{\af,B}P_{\theta_\af(A)}=S_{\af,B}\pi(\phi_\af(\chi_A)).
\]
Since $\CA(\CB,\CL,\theta)=\overline{\operatorname{span}}\{\chi_A:A \in \CB \}$, we have $\pi(f)S_{\af,B}=S_{\af,B}\pi(\phi_\af(f))$ for any $f \in \CA(\CB,\CL,\theta)$ and $\af \in \CL$. Thus we see that 
\begin{align*}
\pi(f)t(e_{\af,B}) =\pi(f)S_{\af,B}&=S_{\af,B}\pi(\phi_\af(f)) \\&=t(e_{\af,B})\pi(\phi_\af(f))=t(\phi(f)e_{\af,B}) 
\end{align*}
for all $f \in \CA(\CB,\CL,\theta)$,  $\af \in \CL$ and $B \in \CI_\af$. Our claim then follows from (\ref{X-span}). 

If $A\in\CJ$, then
\begin{align*} 
\pi(\chi_A)=P_A &= \sum_{\af \in \Delta_A} S_{\af, \theta_\af(A)} S_{\af, \theta_\af(A)}^* \\
&= \sum_{\af \in \Delta_A} t(e_{\af,\theta_\af(A)}) t(e_{\af,\theta_\af(A)})^*\\
&=  \sum_{\af \in \Delta_A} \psi_t(\Theta_{e_{\af,\theta_\af(A)},e_{\af,\theta_\af(A)}}) \\
&=\psi_t \Big(\sum_{\af \in \Delta_A} \Theta_{e_{\af,\theta_\af(A)},e_{\af,\theta_\af(A)}} \Big) \\
&=\psi_t(\phi(\chi_A)). 
\end{align*}
Hence, $\pi(f)=\psi_t(\phi(f))$ for all $f \in K_\CJ$. We have proved that $(\pi,t)$ is a $K_\CJ$-coisometric representation of $X(\CB,\CL,\theta, \CI_\af)$. 

Therefore, we have $ C^*(\CB,\CL,\theta, \CI_\af; \CJ) \cong   \CO_{(K_\CJ, X(\CB,\CL,\theta, \CI_\af))} $ by the universal nature of each algebra. 
\end{proof}

\begin{cor}\label{existence of $C^*$-BDS} 
Let $(\CB,\CL,\theta,\CI_\alpha)$ be a  generalized Boolean dynamical system. Then we have
\begin{enumerate}
\item $ \CT(\CB,\CL,\theta, \CI_\af)  \cong \CT_{X(\CB,\CL,\theta, \CI_\af)}$,
\item $C^*(\CB,\CL,\theta, \CI_\af)  \cong \CO_{X(\CB,\CL,\theta, \CI_\af)}$.
\end{enumerate}
\end{cor}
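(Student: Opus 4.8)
The plan is to deduce both statements directly from Theorem~\ref{universal property:RGBDS}, which already provides the isomorphism $C^*(\CB,\CL,\theta,\CI_\af;\CJ)\cong \CO_{(K_\CJ,X(\CB,\CL,\theta,\CI_\af))}$ for every ideal $\CJ$ of $\CB_{reg}$. The only work left is to specialize $\CJ$ to the two extreme ideals $\emptyset$ and $\CB_{reg}$, and in each case to identify the corresponding ideal $K_\CJ$ of $\CA(\CB,\CL,\theta)$ with the ideal that defines, respectively, the Toeplitz algebra and the Cuntz--Pimsner algebra.

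For part (1), I would take $\CJ=\emptyset$ (the zero ideal $\{\emptyset\}$ of $\CB_{reg}$), so that by definition $\CT(\CB,\CL,\theta,\CI_\af)=C^*(\CB,\CL,\theta,\CI_\af;\emptyset)$. Here $K_\emptyset=\overline{\operatorname{span}}\{\chi_A:A\in\{\emptyset\}\}$ is the zero ideal of $\CA(\CB,\CL,\theta)$, since $\chi_\emptyset=1_{Z(\emptyset)}=0$. By the remark following the definition of the relative Cuntz--Pimsner algebra, $\CO_{(0,X(\CB,\CL,\theta,\CI_\af))}$ is exactly the Toeplitz algebra $\CT_{X(\CB,\CL,\theta,\CI_\af)}$, and Theorem~\ref{universal property:RGBDS} then yields (1).

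For part (2), I would take $\CJ=\CB_{reg}$, so that $C^*(\CB,\CL,\theta,\CI_\af)=C^*(\CB,\CL,\theta,\CI_\af;\CB_{reg})$ by definition. Now $K_{\CB_{reg}}=\overline{\operatorname{span}}\{\chi_A:A\in\CB_{reg}\}$, which by Lemma~\ref{Jx}(2) is precisely Katsura's ideal $J_{X(\CB,\CL,\theta,\CI_\af)}$. Since the (Katsura) Cuntz--Pimsner algebra $\CO_{X(\CB,\CL,\theta,\CI_\af)}$ is by definition the relative Cuntz--Pimsner algebra $\CO_{(J_{X(\CB,\CL,\theta,\CI_\af)},X(\CB,\CL,\theta,\CI_\af))}$, Theorem~\ref{universal property:RGBDS} gives (2).

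There is no genuine obstacle here: the corollary is a matter of substituting the two canonical choices of $\CJ$ into the theorem and recognizing the resulting ideals. The only point requiring care is the ideal identification in part (2), and that has already been carried out in Lemma~\ref{Jx}(2); for part (1) the identification reduces to the observation $\chi_\emptyset=0$.
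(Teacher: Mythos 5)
Your proposal is correct and follows essentially the same route as the paper: both proofs specialize Theorem~\ref{universal property:RGBDS} to $\CJ=\emptyset$ and $\CJ=\CB_{reg}$, identify $K_\emptyset$ with the zero ideal (since $\chi_\emptyset=0$) so that the relative Cuntz--Pimsner algebra is the Toeplitz algebra, and invoke Lemma~\ref{Jx}(2) to identify $K_{\CB_{reg}}$ with Katsura's ideal $J_{X(\CB,\CL,\theta,\CI_\af)}$ for part (2).
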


\begin{proof}
(1): Taking $\CJ=\emptyset$ in Theorem \ref{universal property:RGBDS}, we have 
\[C^*(\CB,\CL,\theta,  \CI_\af;\emptyset) \cong \CO_{(K_\emptyset, X(\CB,\CL,\theta,  \CI_\af))}=\CO_{(\{0\}, X(\CB,\CL,\theta, \CI_\af))}=\CT_{X(\CB,\CL,\theta, \CI_\af)}.\]

\noindent
(2): If $\CJ=\CB_{reg}$, then $K_{\CB_{reg}}=\overline{\operatorname{span}}\{\chi_A: A \in \CB_{reg}\}=J_{X(\CB,\CL,\theta, \CI_\af)}$ by Lemma \ref{Jx}(2). Thus it  follows that 
\[
C^*(\CB,\CL,\theta, \CI_\af; \CB_{reg}) \cong \CO_{(K_{\CB_{reg}}, X(\CB,\CL,\theta, \CI_\af))}= \CO_{ X(\CB,\CL,\theta, \CI_\af)}.
\]
\end{proof}

\begin{cor}\label{cor:inj}
Let $(\CB,\CL,\theta,\CI_\af;\CJ)$ be a relative generalized Boolean dynamical system. Let $\{p_A,s_{\alpha,B}:A\in\CB,\ \alpha\in\CL,\ B\in\CI_\alpha\}$ be the universal $(\CB,\CL,\theta,\CI_\alpha;\CJ)$-representation. Then $p_A\ne 0$ for $A\in\CB \setminus \{\emptyset\}$, and $p_A=\sum_{\alpha\in\Delta_A}s_{\alpha,\theta_\alpha(A)}s_{\alpha,\theta_\alpha(A)}^*$ if and only if $A\in\CJ$.
\end{cor}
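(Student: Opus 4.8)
The plan is to push everything through the relative Cuntz--Pimsner picture of Theorem~\ref{universal property:RGBDS}. Under the isomorphism $C^*(\CB,\CL,\theta,\CI_\af;\CJ)\cong\CO_{(K_\CJ,\,X(\CB,\CL,\theta,\CI_\af))}$, the universal representation $\{p_A,s_{\af,B}\}$ corresponds to the universal $K_\CJ$-coisometric representation $(\pi,t)$, so that $p_A=\pi(\chi_A)$ for every $A\in\CB$ and, exactly as computed in the proof of that theorem, $\sum_{\af\in\Delta_A}s_{\af,\theta_\af(A)}s_{\af,\theta_\af(A)}^{*}=\psi_t(\phi(\chi_A))$ whenever $\ld_A<\infty$.

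For the nonvanishing I would first note that, since $\CJ\subseteq\CB_{reg}$, Lemma~\ref{Jx}(2) gives $K_\CJ=\overline{\operatorname{span}}\{\chi_A:A\in\CJ\}\subseteq\overline{\operatorname{span}}\{\chi_A:A\in\CB_{reg}\}=J_{X(\CB,\CL,\theta,\CI_\af)}$. I would then invoke the standard fact for relative Cuntz--Pimsner algebras (Katsura, \cite{Ka2007}) that the homomorphism $\pi$ of the universal $K$-coisometric representation is injective precisely when $K\subseteq J_X$. As $\chi_A=1_{Z(A)}\neq 0$ for every $A\neq\emptyset$, injectivity of $\pi$ forces $p_A=\pi(\chi_A)\neq 0$.

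The ``if'' half of the second assertion is immediate from Definition~\ref{def:representation of RGBDS}(iv). For ``only if'', suppose $p_A=\sum_{\af\in\Delta_A}s_{\af,\theta_\af(A)}s_{\af,\theta_\af(A)}^{*}$. By Definition~\ref{def:representation of RGBDS}(iii) the summands are mutually orthogonal projections, and by the nonvanishing just proved they are nonzero for each $\af\in\Delta_A$; hence norm-convergence of the right-hand side forces $\Delta_A$ to be finite, i.e.\ $\ld_A<\infty$, so $\chi_A\in J(X(\CB,\CL,\theta,\CI_\af))$ by Lemma~\ref{Jx}(1). The hypothesis now reads $\pi(\chi_A)=\psi_t(\phi(\chi_A))$, that is, $\chi_A$ lies in the covariance set $\{a\in J(X):\pi(a)=\psi_t(\phi(a))\}$ of the universal $K_\CJ$-coisometric representation. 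I would then apply Katsura's structural description of relative Cuntz--Pimsner algebras, which (using $K_\CJ\subseteq J_X$ together with injectivity of $\pi$) identifies this covariance set with $K_\CJ$ itself; thus $\chi_A\in K_\CJ$, and the bijection of Remark~\ref{ideal-bijective} yields $A\in\CJ_{K_\CJ}=\CJ$.

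The hard part is this last step: showing that the covariance set of the universal $K_\CJ$-coisometric representation is no larger than $K_\CJ$. A hands-on alternative would be to realize $\CO_{(K_\CJ,X(\CB,\CL,\theta,\CI_\af))}$ as the quotient of the Toeplitz algebra $\CT_{X(\CB,\CL,\theta,\CI_\af)}$ by the gauge-invariant ideal generated by the covariance defects of the elements of $K_\CJ$, and to split $\chi_A=a_0+a_1$ along $\ker\phi$ and $\ker(\phi)^{\perp}$; covariance then reduces, via injectivity of $\pi$ on $\ker\phi$, to showing $a_1\in K_\CJ$, for which one detects the covariance defect through the faithful conditional expectation onto the fixed-point algebra of the gauge action. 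Rather than reprove this structural input, I would cite the corresponding result of \cite{Ka2007}.
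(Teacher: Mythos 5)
Your proof is correct and takes essentially the same route as the paper: the paper's proof of this corollary is exactly the combination of Theorem~\ref{universal property:RGBDS} with \cite[Corollary 11.8]{Ka2007}, which you have simply unpacked (identifying $p_A$ with $\pi(\chi_A)$, the sum $\sum_{\af\in\Delta_A}s_{\af,\theta_\af(A)}s_{\af,\theta_\af(A)}^*$ with $\psi_t(\phi(\chi_A))$, and the covariance set of the universal $K_\CJ$-coisometric representation with $K_\CJ$ itself via Lemma~\ref{Jx} and Remark~\ref{ideal-bijective}). Your added observation that the hypothesized equality forces $\Delta_A$ to be finite (since infinitely many mutually orthogonal nonzero projections cannot sum in norm) is a detail the paper leaves implicit, but it does not change the approach.
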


\begin{proof}
This follows from Theorem~\ref{universal property:RGBDS} and \cite[Corollary 11.8]{Ka2007}.
\end{proof}

\section{The gauge-invariant uniqueness theorem}\label{GIUT} 

In this section we shall prove a gauge-invariant uniqueness theorem of
 $C^*$-algebras of  relative generalized Boolean dynamical systems. We
also show that the class of $C^*$-algebras of relative generalized
Boolean dynamical systems is the same as the class of $C^*$-algebras
of generalized Boolean dynamical systems, and that the $C^*$-algebra
of any relative generalized Boolean dynamical system is Morita
equivalent to the $C^*$-algebra of a Boolean dynamical system.

\begin{thm}[Gauge-Invariant Uniqueness for $C^*(\CB,\CL, \theta, \CI_\af;\CJ)$] 
\label{GIUT for RGBDS}  

Let $(\CB,\CL,\theta,\CI_\af;\CJ)$ be a relative generalized Boolean dynamical system and let $C^*(\CB,\CL, \theta, \CI_\af;\CJ)=C^*(p_A, s_{\af,B})$.
Suppose that $\{P_A, S_{\af,B}: A \in \CB,~ \af \in \CL ~\text{and}~ B \in \CI_\af \}$ is a   $(\CB, \CL, \theta, \CI_\af;\CJ)$-representation in a $C^*$-algebra $\CX$ and let $\pi: C^*(\CB,\CL,\theta, \CI_\af;\CJ) \to \CX$ be the unique $*$-homomorphism such that  $\pi(p_A)=P_A$ and $\pi(s_{\af,B})=S_{\af,B}$ for all $ A \in \CB$, $\af \in \CL $ and $B \in \CI_\af$. 
Then  $\pi$ is injective if and only if the following properties hold:
\begin{enumerate} 
\item $P_A \neq 0$ whenever $A \neq \emptyset$, 
\item $P_A  -\sum_{\af \in \Delta_A} S_{\af, \theta_\af(A)}S_{\af, \theta_\af(A)}^*  \neq 0$ for all $A \in \CB_{reg} \setminus \CJ$,
\item there exists for each $z\in\T$ a $*$-homomorphism $\bt_z: C^*(P_A,
S_{\af,B}) \to  C^*(P_A, S_{\af,B}) $ such that $\bt_z(P_A)=P_A$ and
$\bt_z(S_{\af,B})=zS_{\af,B}$ for $A \in \CB$, $\af \in \CL$ and $B
\in \CI_\af$.
\end{enumerate}
\end{thm}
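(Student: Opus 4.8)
The plan is to deduce this gauge-invariant uniqueness theorem directly from Katsura's gauge-invariant uniqueness theorem for relative Cuntz--Pimsner algebras \cite[Corollary 11.8]{Ka2007} via the isomorphism $C^*(\CB,\CL,\theta,\CI_\af;\CJ)\cong\CO_{(K_\CJ,X(\CB,\CL,\theta,\CI_\af))}$ established in Theorem~\ref{universal property:RGBDS}. The forward direction is the easy one: if $\pi$ is injective then (1) holds because $p_A\neq 0$ for $A\neq\emptyset$ by Corollary~\ref{cor:inj}, (2) holds because $p_A-\sum_{\af\in\Delta_A}s_{\af,\theta_\af(A)}s_{\af,\theta_\af(A)}^*\neq 0$ for $A\in\CB_{reg}\setminus\CJ$ (again by Corollary~\ref{cor:inj}, which says this difference vanishes exactly when $A\in\CJ$) and $\pi$ carries nonzero elements to nonzero elements, and (3) holds by transporting the gauge action $\gm$ on $C^*(\CB,\CL,\theta,\CI_\af;\CJ)$ through $\pi$ (setting $\bt_z:=\pi\circ\gm_z\circ\pi^{-1}$ on the image, which is well-defined precisely because $\pi$ is injective).

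For the converse, which is the substantive direction, I would first use condition (3) to invoke Katsura's theorem. Under the correspondence of Theorem~\ref{universal property:RGBDS}, the representation $\{P_A,S_{\af,B}\}$ corresponds to a $K_\CJ$-coisometric representation $(\pi',t')$ of $X(\CB,\CL,\theta,\CI_\af)$, and $\pi$ becomes the induced homomorphism on the relative Cuntz--Pimsner algebra. Katsura's gauge-invariant uniqueness theorem states that such an induced map is injective provided the representation admits a gauge action (supplied by (3)) and satisfies an injectivity-type condition on the coefficient algebra governing the gap between $K_\CJ$ and the maximal Katsura ideal $J_{X}$. Concretely, the hypothesis of \cite[Corollary 11.8]{Ka2007} requires that $\pi'$ be injective on $\CA(\CB,\CL,\theta)$ and that $\pi'(f)\notin\psi_{t'}(\phi(f))$-image for the relevant elements; the task is to translate this coefficient-algebra condition into the stated conditions (1) and (2).

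The main obstacle will be precisely this translation. Condition (1) should yield injectivity of $\pi'=\pi|_{\CA(\CB,\CL,\theta)}$: since $\CA(\CB,\CL,\theta)=\overline{\operatorname{span}}\{\chi_A:A\in\CB\}$ and an ideal of it has the form $\overline{\operatorname{span}}\{\chi_A:\chi_A\in\CI\}$ by Lemma~\ref{prop: A(B,L)}(2), a kernel would have to contain some $\chi_A=\pi'^{-1}$-preimage with $P_A=0$ for $A\neq\emptyset$, contradicting (1); here I expect to need the structure of ideals of $\CA(\CB,\CL,\theta)$ together with the bijection of Remark~\ref{ideal-bijective}. Condition (2) is the delicate one: it must encode that $\pi$ does not collapse the quotient $J_{X}/K_\CJ$, i.e.\ that no element of $J_{X(\CB,\CL,\theta,\CI_\af)}\setminus K_\CJ$ gets forced into the coisometric relation. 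Using Lemma~\ref{Jx}(2) to identify $J_{X}=\overline{\operatorname{span}}\{\chi_A:A\in\CB_{reg}\}$ and $K_\CJ=\overline{\operatorname{span}}\{\chi_A:A\in\CJ\}$, the difference of these ideals is controlled by the sets in $\CB_{reg}\setminus\CJ$, and the quantity $P_A-\sum_{\af\in\Delta_A}S_{\af,\theta_\af(A)}S_{\af,\theta_\af(A)}^* = \pi'(\chi_A)-\psi_{t'}(\phi(\chi_A))$ is exactly the obstruction measured in Katsura's condition.

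Thus the key steps, in order, are: (i) invoke Theorem~\ref{universal property:RGBDS} to pass to $(\pi',t')$ and $\CO_{(K_\CJ,X)}$; (ii) establish injectivity of $\pi'$ on $\CA(\CB,\CL,\theta)$ from (1) using Lemma~\ref{prop: A(B,L)}(2); (iii) reinterpret (2) via Lemma~\ref{Jx} as the statement that $\pi'(f)\neq\psi_{t'}(\phi(f))$ for $f=\chi_A$, $A\in\CB_{reg}\setminus\CJ$, and hence for all $f\in J_X\setminus K_\CJ$ after taking closed spans and using continuity; (iv) check that these together verify the exact hypotheses of \cite[Corollary 11.8]{Ka2007}, with (3) supplying the required gauge action, to conclude $\pi$ is injective. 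The forward direction is recorded first as the routine implication. I anticipate that the bulk of the care goes into step (iii), verifying that the pointwise condition (2) on generators upgrades to the full separation condition on the ideal $J_X$ needed by Katsura's criterion.
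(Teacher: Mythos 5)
Your proposal is correct and takes essentially the same route as the paper: both pass through Theorem~\ref{universal property:RGBDS} to a $K_\CJ$-coisometric representation $(\pi',t')$ of $X(\CB,\CL,\theta,\CI_\af)$, identify conditions (1)--(3) with Katsura's three hypotheses (injectivity of $\pi'$, the equality $\{a\in\CA(\CB,\CL,\theta):\phi(a)\in\mathfrak{K}(X),\ \pi'(a)=\psi_{t'}(\phi(a))\}=K_\CJ$, and existence of a gauge action), and then invoke \cite[Corollary 11.8]{Ka2007}. The paper merely asserts these three equivalences, whereas you sketch their verification via Lemma~\ref{prop: A(B,L)}(2) and Lemma~\ref{Jx} (noting that the translation of (2) also requires (1) to rule out sets $A$ with $\ld_A<\infty$ that are not regular), so your plan is, if anything, more detailed than the paper's proof.
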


\begin{proof}
If $\{P_A, S_{\af,B}: A \in \CB,~ \af \in \CL ~\text{and}~ B \in
\CI_\af \}$ is a $(\CB, \CL, \theta, \CI_\af;\CJ)$-representation in a
$C^*$-algebra $\CX$, then by Theorem \ref{universal property:RGBDS},
we have a $K_\CJ$-coisometric representation $(\pi',t')$ of
$X(\CB,\CL,\theta, \CI_\af)$. The conditions (1)-(3) are equivalent to
(1)'-(3)' stated below, respectively:
 \begin{enumerate}
\item[(1)']  $\ker \pi'=0$,
\item[(2)'] $\{a \in \CA(\CB,\CL,\theta): \phi(a)\in
\CK(X(\CB,\CL,\theta,\CI_\af))\text{ and } \pi'(a)=
\psi_{t'}(\phi(a))\} = K_\CJ$,
\item[(3)'] $(\pi',t')$ admits a gauge action.
\end{enumerate}
Thus, the result follows from \cite[Corollary 11.8]{Ka2007}.
\end{proof}

As a corollary, we obtain a version of the  gauge-invariant uniqueness theorem for $C^*$-algebras of generalized Boolean dynamical systems. 
It is a generalization of \cite[Corollary 3.10]{BaCaPa2017}, \cite[Theorem 5.10]{COP}, and \cite[Theorem 4.2]{JKP}.

\begin{cor}[Gauge-Invariant Uniqueness for  $C^*(\CB,\CL,\theta,\CI_\alpha)$] \label{GIUT of GBDS} \label{GIUT for GBDS}
Let $(\CB,\CL,\theta,\CI_\af)$ be a generalized Boolean dynamical system, and let $C^*(\CB,\CL, \theta, \CI_\af):=C^*(p_A, s_{\af,B})$. 
Suppose that $\{P_A, S_{\af,B}: A \in \CB,~ \af \in \CL ~\text{and}~ B \in \CI_\af \}$ is a Cuntz--Krieger representation of $(\CB, \CL, \theta, \CI_\af)$ in a $C^*$-algebra $\CX$ and let $\pi: C^*(\CB,\CL,\theta, \CI_\af) \to \CX$ be the unique $*$-homomorphism such that  $\pi(p_A)=P_A$ and $\pi(s_{\af,B})=S_{\af,B}$ for all $ A \in \CB$, $\af \in \CL $ and $B \in \CI_\af$. Then $\pi$ is injective if and only if   the following properties hold:
\begin{enumerate} 
\item $P_A \neq 0$ whenever $A \neq \emptyset$, 
\item there exists for each $z\in\T$ a $*$-homomorphism $\bt_z: C^*(P_A,
S_{\af,B}) \to  C^*(P_A, S_{\af,B}) $ such that $\bt_z(P_A)=P_A$ and
$\bt_z(S_{\af,B})=zS_{\af,B}$ for $A \in \CB$, $\af \in \CL$ and $B
\in \CI_\af$.
\end{enumerate}
\end{cor}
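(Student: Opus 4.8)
The plan is to deduce this corollary directly from Theorem~\ref{GIUT for RGBDS} by specializing to $\CJ=\CB_{reg}$. First I would recall, from the definitions recorded in Section~\ref{GBDS}, that a Cuntz--Krieger representation of $(\CB,\CL,\theta,\CI_\af)$ is by definition exactly a $(\CB,\CL,\theta,\CI_\af;\CB_{reg})$-representation, and that $C^*(\CB,\CL,\theta,\CI_\af)=C^*(\CB,\CL,\theta,\CI_\af;\CB_{reg})$. Hence the family $\{P_A,S_{\af,B}\}$ in the statement is precisely a $(\CB,\CL,\theta,\CI_\af;\CB_{reg})$-representation, and the universal $*$-homomorphism $\pi$ appearing here is the same map produced by Theorem~\ref{GIUT for RGBDS} in the case $\CJ=\CB_{reg}$, the two domains coinciding by the identification above.

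Next I would apply Theorem~\ref{GIUT for RGBDS} with $\CJ=\CB_{reg}$. Under this identification, condition (1) of the theorem is condition (1) here, and condition (3) of the theorem (existence of the gauge action $\bt_z$ on $C^*(P_A,S_{\af,B})$) is condition (2) here. It therefore remains only to account for condition (2) of Theorem~\ref{GIUT for RGBDS}, which demands that $P_A-\sum_{\af\in\Delta_A}S_{\af,\theta_\af(A)}S_{\af,\theta_\af(A)}^*\neq 0$ for all $A\in\CB_{reg}\setminus\CJ$. Since $\CJ=\CB_{reg}$, the index set $\CB_{reg}\setminus\CB_{reg}$ is empty, so this condition is vacuously satisfied and imposes no constraint.

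Putting these observations together, Theorem~\ref{GIUT for RGBDS} yields that $\pi$ is injective if and only if its conditions (1) and (3) hold, which are exactly conditions (1) and (2) of the corollary, while its condition (2) is automatic. There is no genuine obstacle here: the result is essentially immediate once one specializes $\CJ$. The only point that requires attention is the routine bookkeeping of confirming that the notion of Cuntz--Krieger representation and the universal algebra $C^*(\CB,\CL,\theta,\CI_\af)$ agree with the $\CJ=\CB_{reg}$ instances of their relative counterparts, so that the two $*$-homomorphisms denoted $\pi$ are literally the same, and that the index set in condition (2) of the theorem collapses to $\emptyset$.
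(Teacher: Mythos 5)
Your proposal is correct and takes exactly the same route as the paper, whose entire proof is the one-liner ``Taking $\CJ=\CB_{reg}$ in Theorem \ref{GIUT for RGBDS}, we have the result.'' You have simply made explicit the bookkeeping the paper leaves implicit: that a Cuntz--Krieger representation is by definition a $(\CB,\CL,\theta,\CI_\af;\CB_{reg})$-representation, and that condition (2) of the theorem is vacuous since $\CB_{reg}\setminus\CB_{reg}=\emptyset$.
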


\begin{proof} Taking $\CJ=\CB_{reg}$ in Theorem \ref{GIUT for RGBDS}, we have the result. 
\end{proof}

The following proposition has the consequence that the $C^*$-algebra
of any relative generalized Boolean dynamical system $(\CB,\CL,\theta,
\CI_\af;\CJ)$ is Morita equivalent to $C^*(\CB,\CL,\theta;\CJ)
(=C^*(\CB,\CL,\theta, \CR_\af;\CJ))$.

\begin{prop}\label{full hereditary} 
Let $(\CB,\CL,\theta,\CI_\af;\CJ)$ be a relative generalized Boolean dynamical system. Then $C^*(\mathcal{B},\mathcal{L},\theta,\mathcal{I}_\alpha;\CJ)$  is
(isomorphic to) a full hereditary $C^*$-subalgebra of $C^*(\mathcal{B},\mathcal{L},\theta,\mathcal{B};\CJ)$.   
\end{prop}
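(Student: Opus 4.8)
The plan is to realize $C^*(\CB,\CL,\theta,\CI_\af;\CJ)$ inside the bigger algebra $C:=C^*(\CB,\CL,\theta,\CB;\CJ)$ through the evident inclusion of generators, prove the inclusion is injective with the gauge-invariant uniqueness theorem, and then verify fullness and hereditariness on the spanning monomials of \eqref{eq:2}. First I would build the embedding. Since $\CR_\af\subseteq\CI_\af\subseteq\CB$, the pentuple $(\CB,\CL,\theta,\CB;\CJ)$ is itself a relative generalized Boolean dynamical system; write $\{q_A,\ t_{\af,B}:A\in\CB,\ \af\in\CL,\ B\in\CB\}$ for its universal representation. Because $\theta_\af(A)\in\CR_\af\subseteq\CI_\af$, the subfamily $\{q_A,\ t_{\af,B}:A\in\CB,\ \af\in\CL,\ B\in\CI_\af\}$ satisfies (i)--(iv) of Definition~\ref{def:representation of RGBDS}, so it is a $(\CB,\CL,\theta,\CI_\af;\CJ)$-representation, and the universal property of $C^*(\CB,\CL,\theta,\CI_\af;\CJ)=C^*(p_A,s_{\af,B})$ produces a $*$-homomorphism $\iota$ with $\iota(p_A)=q_A$ and $\iota(s_{\af,B})=t_{\af,B}$ for $B\in\CI_\af$.

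To see $\iota$ is injective I would apply Theorem~\ref{GIUT for RGBDS}. Conditions (1) and (2) there, namely $q_A\neq0$ for $A\neq\emptyset$ and $q_A-\sum_{\af\in\Delta_A}t_{\af,\theta_\af(A)}t_{\af,\theta_\af(A)}^*\neq0$ for $A\in\CB_{reg}\setminus\CJ$, are precisely the content of Corollary~\ref{cor:inj} applied to the system $(\CB,\CL,\theta,\CB;\CJ)$; condition (3) is met by restricting the gauge action of $C$ to the subalgebra $C^*(q_A,t_{\af,B}:B\in\CI_\af)$, which that action preserves. Hence $\iota$ is an isomorphism onto
\[
D:=\iota\bigl(C^*(\CB,\CL,\theta,\CI_\af;\CJ)\bigr)=\overline{\operatorname{span}}\{t_{\af,A}t_{\bt,B}^*:\af,\bt\in\CL^*,\ A\in\CI_\af,\ B\in\CI_\bt\},
\]
the last equality being \eqref{eq:2} for the downstairs algebra. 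Fullness is then immediate: $D$ contains $q_A$ for every $A\in\CB$, and since $t_{\af,B}^*t_{\af,B}=q_B$ (Definition~\ref{def:representation of RGBDS}(iii)) the partial isometry $t_{\af,B}$ satisfies $t_{\af,B}=t_{\af,B}q_B$, so every generator $t_{\af,B}$ of $C$ and its adjoint lie in the closed ideal generated by $D$; thus that ideal is all of $C$.

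The main work, and the main obstacle, is hereditariness, which I would establish by showing $DCD\subseteq D$. The technical engine is the compatibility fact that for $\af,\gm\in\CL^*$ one has $X\in\CI_\af\Rightarrow\theta_\gm(X)\in\CI_{\af\gm}$: writing $X\subseteq\theta_{\af_2\cdots\af_{|\af|}}(F)$ with $F\in\CI_{\af_1}$ (and using $\theta_{\gm_1}(X)\in\CR_{\gm_1}\subseteq\CI_{\gm_1}$ when $\af=\emptyset$), applying the order-preserving map $\theta_\gm$ and reading off the definition of $\CI_{\af\gm}$ gives the claim. Let $\CM$ denote the closed span of the monomials $t_{\sm,P}t_{\ta,Q}^*$ of $C$ with left label $P\in\CI_\sm$, so that $\CM^*$ is the closed span of those with right label in the ideal. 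Using Lemma~\ref{cal} together with this compatibility, a routine case check on $\bt$ versus $\mu$ shows that multiplying any spanning monomial of $C$ by a spanning monomial $t_{\gm,G}t_{\dt,H}^*$ of $D$ (so $G\in\CI_\gm$, $H\in\CI_\dt$) always produces a monomial whose right label lands in the relevant $\CI$; hence $CD\subseteq\CM^*$, and taking adjoints (both $C$ and $D$ are self-adjoint) $DC\subseteq\CM$. A second, entirely parallel case analysis shows $\CM D\subseteq D$: a product of a monomial with left label in $\CI$ by a monomial of $D$ has \emph{both} labels in $\CI$. Combining these, $DCD=(DC)D\subseteq\CM D\subseteq D$, so $D$ is hereditary.

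The only delicate points I anticipate are bookkeeping ones: making sure the truncation indices $\bt'$, $\mu'$, etc. appearing in Lemma~\ref{cal} interact correctly with the multi-index ideals $\CI_{\af\gm}$ via the compatibility lemma, and handling the degenerate cases where one of the words is empty (where $\CI_\emptyset=\CB$ removes the constraint). These are mechanical once the compatibility lemma and Lemma~\ref{cal} are in hand.
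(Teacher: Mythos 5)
Your proposal is correct, and its skeleton is the same as the paper's: embed via the universal property applied to the subfamily $\{q_A,\ t_{\af,B}:B\in\CI_\af\}$, get injectivity from Theorem~\ref{GIUT for RGBDS} with conditions (1)--(2) supplied by Corollary~\ref{cor:inj} for $(\CB,\CL,\theta,\CB;\CJ)$ and condition (3) by restricting the gauge action, and get fullness from the fact that any ideal containing all the projections absorbs $t_{\af,B}=t_{\af,B}q_B$. The only real divergence is hereditariness, which you treat as the main obstacle and prove by a systematic verification of $DCD\subseteq D$ on spanning monomials, via Lemma~\ref{cal}, the compatibility fact $X\in\CI_\af\Rightarrow\theta_\gm(X)\in\CI_{\af\gm}$, and the intermediate space $\CM$; the paper instead dispatches hereditariness in one line, observing that $p_As_{\alpha,B}p_B=s_{\alpha,B\cap\theta_\alpha(A)}$ lies in the subalgebra because $B\cap\theta_\alpha(A)\in\CR_\alpha\subseteq\CI_\alpha$. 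Both rest on the same mechanism --- compressing by elements of $D$ forces the labels into the small ideals --- but the paper's version is a terse generator-level computation, while yours is a complete check on the spanning set \eqref{eq:2}; your case analysis does go through (in the products $DC$ only the left label is forced small, and multiplying on the right by $D$ then forces both labels small, exactly as you indicate), so your argument can be read as filling in the details that the paper leaves implicit, at the cost of extra bookkeeping.
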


\begin{proof} 
Let $C^*(\mathcal{B},\mathcal{L},\theta,\mathcal{I}_\alpha;\CJ):=C^*(q_A,t_{\af,B})$ and $C^*(\mathcal{B},\mathcal{L},\theta,\mathcal{B};\CJ):=C^*(p_A,s_{\af,B})$. Since $\{p_A,s_{\af,B}: A\in \CB, \af \in \CL ~\text{and}~ B \in \CI_\af\}$ is a    $(\mathcal{B},\mathcal{L},\theta,\mathcal{I}_\alpha;\CJ)$-representation,  by the universal property of  $C^*(\mathcal{B},\mathcal{L},\theta,\mathcal{I}_\alpha;\CJ)$ there exists a $*$-homomorphism $\iota:C^*(\mathcal{B},\mathcal{L},\theta,\mathcal{I}_\alpha;\CJ) \to C^*(\mathcal{B},\mathcal{L},\theta,\mathcal{B};\CJ) $ such that  
\[
\iota(q_A) = p_A ~\text{and }~ \iota(t_{\af,B})=s_{\af,B}
\] 
for $A \in \CB$, $\af \in \CL$ and $B\in \CI_\af$. Since $ C^*(\mathcal{B},\mathcal{L},\theta,\mathcal{B};\CJ) $ admits a gauge action, $\iota(q_A)=p_A \neq 0$ for $A \neq \emptyset$, and $p_A  -\sum_{\af \in \Delta_A} s_{\af, \theta_\af(A)}s_{\af, \theta_\af(A)}^*  \neq 0$ for all $A \in \CB_{reg} \setminus \CJ$ by Corollary \ref{cor:inj}, 
 the homomorphism $\iota$ is injective by Theorem  \ref{GIUT for RGBDS}. 

We identify $\iota(q_A)$ with $p_A$  and $\iota(t_{\af,B})$ with $ s_{\af,B}$ so that  $C^*(\mathcal{B},\mathcal{L},\theta,\mathcal{I}_\alpha;\CJ)$ is a subalgebra of $C^*(\mathcal{B},\mathcal{L},\theta,\mathcal{B};\CJ)$. We then have that $C^*(\mathcal{B},\mathcal{L},\theta,\mathcal{I}_\alpha;\CJ)$ is hereditary because $p_As_{\alpha,B}p_B=s_{\alpha,B\cap\theta_\alpha(A)}\in C^*(\mathcal{B},\mathcal{L},\theta,\mathcal{I}_\alpha;\CJ)$ for any $A,B\in\mathcal{B}$ and any $\alpha\in\mathcal{L}$, and it is full because any ideal of $C^*(\mathcal{B},\mathcal{L},\theta,\mathcal{B})$ that contains $\{p_A:A\in\mathcal{B}\}$ must be equal to $C^*(\mathcal{B},\mathcal{L},\theta,\mathcal{B};\CJ)$ since $s_{\af,B}p_B=s_{\af,B}$ for any $B \in \CB$ and $\af \in \CL$. 
\end{proof}

Let $(\CB,\CL,\theta,\CI_\af;\CJ)$ be a relative generalized Boolean
dynamical system. By imitating a construction in \cite[Section
6]{Ka2007}, we shall now construct a generalized Boolean dynamical system
$(\widetilde{\CB},\CL,\widetilde{\theta},\widetilde{\CI}_\alpha)$ such
that $C^*(\CB,\CL,\theta,\CI_\af;\CJ)$ is isomorphic to
$C^*(\widetilde{\CB},\CL,\widetilde{\theta},\widetilde{\CI}_\alpha)$.

Given a relative generalized Boolean dynamical system  $(\CB,\CL,\theta,\CI_\af;\CJ)$, we let 
\[
\widetilde{\CB}=\{(A,[B]_{\CJ}):A,B\in\CB ~\text{and}~ [A]_{\CB_{reg}}=[B]_{\CB_{reg}}\}.
\] 
Then $\widetilde{\CB}$ is a Boolean algebra with empty set $(\emptyset, \emptyset):=(\emptyset, [\emptyset]_\CJ)$ and operations defined by
\begin{align*} 
(A_1,[B_1]_{\CJ})\cup (A_2,[B_2]_{\CJ})&:=(A_1\cup A_2,[B_1\cup B_2]_{\CJ}),\\
(A_1,[B_1]_{\CJ})\cap (A_2,[B_2]_{\CJ})&:=(A_1\cap A_2,[B_1\cap B_2]_{\CJ}),\\
(A_1,[B_1]_{\CJ})\setminus (A_2,[B_2]_{\CJ})&:=(A_1\setminus A_2,[B_1\setminus B_2]_{\CJ}).
\end{align*}  
For $\alpha\in\CL$, we define $\widetilde{\theta}_\alpha:\widetilde{\CB} \to \widetilde{\CB}$ by 
\[
\widetilde{\theta}_\alpha(A,[B]_{\CJ})=(\theta_\alpha(A),[\theta_\alpha(A)]_{\CJ}).
\]
Then $(\widetilde{\CB}, \CL,\widetilde{\theta})$ is a Boolean
dynamical system. Since $\Delta^{(\widetilde{\CB},
\CL,\widetilde{\theta})}_{(\emptyset,[B]_\CJ)}=\emptyset$ for any
$B\in\CB$, we have that $[B]_\CJ=\emptyset$ if $(A,[B]_\CJ)\in
\widetilde{\CB}^{(\widetilde{\CB}, \CL,\widetilde{\theta})}_{reg}$. It follows
that
\begin{equation*}
\widetilde{\CB}_{reg}:=\widetilde{\CB}^{(\widetilde{\CB},
\CL,\widetilde{\theta})}_{reg}=\{(A,\emptyset):A\in\CB_{reg}\}.
\end{equation*}

\begin{prop}\label{RGBDS isom GBDS}
Let $(\CB,\CL,\theta,\CI_\af;\CJ)$ be a relative generalized Boolean dynamical system. Then we have 
\[
C^*(\CB,\CL,\theta,\CI_\af;\CJ) \cong C^*(\widetilde{\CB},\CL,\widetilde{\theta},\widetilde{\CI}_\alpha),
\]
where $\widetilde{\CI}_\alpha=\{(A,[A]_{\CJ}):A\in\CI_\alpha\}$ for $\alpha\in\CL$.
\end{prop}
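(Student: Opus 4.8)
The plan is to follow the construction imitated from \cite[Section 6]{Ka2007}: produce an explicit $(\CB,\CL,\theta,\CI_\af;\CJ)$-representation inside $C^*(\widetilde{\CB},\CL,\widetilde{\theta},\widetilde{\CI}_\alpha)$, invoke the universal property to obtain a $*$-homomorphism $\rho$, and then prove $\rho$ is an isomorphism by checking surjectivity directly and injectivity via the gauge-invariant uniqueness theorem (Theorem~\ref{GIUT for RGBDS}).

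First I would write $\{P_{(A,[B]_\CJ)},\ S_{\af,(C,[C]_\CJ)}\}$ for the universal Cuntz--Krieger representation generating $C^*(\widetilde{\CB},\CL,\widetilde{\theta},\widetilde{\CI}_\alpha)=C^*(\widetilde{\CB},\CL,\widetilde{\theta},\widetilde{\CI}_\alpha;\widetilde{\CB}_{reg})$, and set $Q_A:=P_{(A,[A]_\CJ)}$ for $A\in\CB$ and $T_{\af,B}:=S_{\af,(B,[B]_\CJ)}$ for $\af\in\CL$, $B\in\CI_\af$ (note $(A,[A]_\CJ)\in\widetilde{\CB}$ and $(B,[B]_\CJ)\in\widetilde{\CI}_\af$ always). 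Conditions (i)--(iii) of Definition~\ref{def:representation of RGBDS} for $\{Q_A,T_{\af,B}\}$ transfer immediately from the tilde system using $(A,[A]_\CJ)\cap(A',[A']_\CJ)=(A\cap A',[A\cap A']_\CJ)$ and $\widetilde{\theta}_\af(A,[A]_\CJ)=(\theta_\af(A),[\theta_\af(A)]_\CJ)$. For (iv), the key point is that $A\in\CJ$ gives $[A]_\CJ=\emptyset$, so $Q_A=P_{(A,\emptyset)}$ with $(A,\emptyset)\in\widetilde{\CB}_{reg}$; since $\Delta^{(\widetilde{\CB},\CL,\widetilde{\theta})}_{(A,\emptyset)}=\Delta_A$ and $\widetilde{\theta}_\af(A,\emptyset)=(\theta_\af(A),[\theta_\af(A)]_\CJ)$, the Cuntz--Krieger relation of the tilde system reads exactly $Q_A=\sum_{\af\in\Delta_A}T_{\af,\theta_\af(A)}T_{\af,\theta_\af(A)}^*$. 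This yields $\rho$ with $\rho(p_A)=Q_A$ and $\rho(s_{\af,B})=T_{\af,B}$.

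For surjectivity, the only generators not obviously in the image are the projections $P_{(A,[B]_\CJ)}$ with $[B]_\CJ\neq[A]_\CJ$; I would reduce these to recovering $P_{(\emptyset,[W]_\CJ)}$ for $W\in\CB_{reg}$. Indeed, since $(A,[B]_\CJ)\in\widetilde{\CB}$ we have $[A]_{\CB_{reg}}=[B]_{\CB_{reg}}$, so by $\mathbf{R1}$ there are $A',B'\in\CB_{reg}$ with $A\cup A'=B\cup B'$ and $A\cap A'=B\cap B'=\emptyset$, whence the Boolean identity $(A,[B]_\CJ)=\big((A,[A]_\CJ)\cup(\emptyset,[A']_\CJ)\big)\setminus(\emptyset,[B']_\CJ)$ expresses every such $P_{(A,[B]_\CJ)}$ as a polynomial in projections of the reduced form. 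Finally, $(W,\emptyset)\in\widetilde{\CB}_{reg}$ gives $P_{(W,\emptyset)}=\sum_{\af\in\Delta_W}T_{\af,\theta_\af(W)}T_{\af,\theta_\af(W)}^*=\rho\big(\sum_{\af\in\Delta_W}s_{\af,\theta_\af(W)}s_{\af,\theta_\af(W)}^*\big)$, and as $(W,\emptyset)$ and $(\emptyset,[W]_\CJ)$ are disjoint with union $(W,[W]_\CJ)$ we obtain $P_{(\emptyset,[W]_\CJ)}=\rho(p_W)-P_{(W,\emptyset)}$. Thus all generators lie in the image. I expect this recovery of the ``second-coordinate'' projections to be the main obstacle, since it is exactly where the structure of $\widetilde{\CB}$ must be combined with the Cuntz--Krieger relation.

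For injectivity I would apply Theorem~\ref{GIUT for RGBDS} to $\{Q_A,T_{\af,B}\}$. Condition (1) follows from Corollary~\ref{cor:inj}, since $A\neq\emptyset$ forces $(A,[A]_\CJ)\neq(\emptyset,\emptyset)$. Condition (3) holds because the gauge action of $C^*(\widetilde{\CB},\CL,\widetilde{\theta},\widetilde{\CI}_\alpha)$ fixes each $Q_A$ and scales each $T_{\af,B}$ by $z$, so it restricts to the $C^*$-subalgebra they generate. For condition (2), the same computation as in (iv) gives, for $A\in\CB_{reg}$, $Q_A-\sum_{\af\in\Delta_A}T_{\af,\theta_\af(A)}T_{\af,\theta_\af(A)}^*=P_{(A,[A]_\CJ)}-P_{(A,\emptyset)}=P_{(\emptyset,[A]_\CJ)}$, which by Corollary~\ref{cor:inj} is nonzero precisely when $[A]_\CJ\neq\emptyset$, i.e.\ when $A\in\CB_{reg}\setminus\CJ$. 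Hence $\rho$ is injective, and together with surjectivity it is the desired isomorphism.
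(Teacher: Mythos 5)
Your proof is correct, and it takes a genuinely different route from the paper's in the second half. Both arguments start identically: embed the family $\{P_{(A,[A]_\CJ)},\ S_{\af,(B,[B]_\CJ)}\}$ as a $(\CB,\CL,\theta,\CI_\af;\CJ)$-representation inside $C^*(\widetilde{\CB},\CL,\widetilde{\theta},\widetilde{\CI}_\alpha)$ and invoke the universal property to get the canonical $*$-homomorphism (your $\rho$, the paper's $\phi$). From there the paper constructs an explicit two-sided inverse: it defines $P_{(A,[B]_\CJ)}:=p_A+q_C-q_D$ (with $q_X:=p_X-\sum_{\af\in\Delta_X}s_{\af,\theta_\af(X)}s_{\af,\theta_\af(X)}^*$ and $C,D\in\CB_{reg}$ chosen so that $A\cup C=B\cup D$ and $A\cap C=B\cap D=\emptyset$), proves this is independent of the choices, verifies all four Cuntz--Krieger relations for the tilde system (a lengthy computation), and then checks that both compositions are the identity. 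You instead prove your single map is injective via Theorem~\ref{GIUT for RGBDS} --- with the key computation $Q_A-\sum_{\af\in\Delta_A}T_{\af,\theta_\af(A)}T_{\af,\theta_\af(A)}^*=P_{(\emptyset,[A]_\CJ)}$, nonzero for $A\in\CB_{reg}\setminus\CJ$ by Corollary~\ref{cor:inj} applied to the tilde system --- and surjective via the Boolean identity $(A,[B]_\CJ)=\bigl((A,[A]_\CJ)\cup(\emptyset,[A']_\CJ)\bigr)\setminus(\emptyset,[B']_\CJ)$ together with the recovery $P_{(\emptyset,[W]_\CJ)}=\rho(p_W)-P_{(W,\emptyset)}$ for $W\in\CB_{reg}$. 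Your route is shorter and avoids the well-definedness argument entirely, at the cost of relying on the gauge-invariant uniqueness machinery; the paper's route is self-contained at this point and yields explicit formulas for the inverse on generators (note that your surjectivity argument secretly reproduces the same formula, since $P_{(A,[B]_\CJ)}=Q_A+P_{(\emptyset,[A']_\CJ)}-P_{(\emptyset,[B']_\CJ)}=\rho(p_A+q_{A'}-q_{B'})$, which is exactly the paper's definition of the inverse). Both are complete proofs; yours is a legitimate and arguably cleaner alternative given that Theorem~\ref{GIUT for RGBDS} and Corollary~\ref{cor:inj} are already available.
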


\begin{proof} 
Let $\{p_A, s_{\af,B}: A \in \CB, \af \in \CL ~\text{and}~ B \in \CI_\af\}$ be the universal  $(\CB,\CL,\theta,\CI_\af;\CJ)$-representation, and let $\{p_{(A,[B]_\CJ)}, s_{\af,(A,[A]_\CJ)}: (A,[B]_\CJ) \in \widetilde{\CB}, \af \in \CL ~\text{and}~ (A,[A]_\CJ) \in \widetilde{\CI}_\af\}$ be the universal Cuntz-Krieger representation of $(\widetilde{\CB},\CL,\widetilde{\theta},\widetilde{\CI}_\alpha)$. It is easy to see that  $\{p_{(A,[A]_\CJ)}, s_{\af,(B,[B]_\CJ)}:A \in \CB, \af \in \CL ~\text{and}~ B \in \CI_\af\}$ is a  $(\CB, \CL, \theta, \CI_\af;\CJ)$-representation. There is therefore a $*$-homomorphsim 
\[
\phi:C^*(\CB,\CL,\theta,\CI_\af;\CJ) \to C^*(\widetilde{\CB},\CL,\widetilde{\theta},\widetilde{\CI}_\alpha)
\]
such that
\[
\phi(p_A)=p_{(A,[A]_\CJ)}~\text{and}~ \phi(s_{\af,B})=s_{\af,(B,[B]_\CJ)}
\]
for $A \in \CB$, $\af \in \CL$ and $ B \in \CI_\af$.

We shall construct an inverse to $\phi$. For each $(A,[B]_\CJ) \in \widetilde{\CB}$, $\af \in \CL$ and $ (A,[A]_\CJ) \in \widetilde{\CI}_\af$, we define
\begin{align*} 
P_{(A,[B]_\CJ)} &:=p_A+p_C-\sum_{\alpha\in\Delta_C}s_{\alpha,\theta_\alpha(C)}s_{\alpha,\theta_\alpha(C)}^*-p_D+\sum_{\alpha\in\Delta_D}s_{\alpha,\theta_\alpha(D)}s_{\alpha,\theta_\alpha(D)}^*, \\
S_{\af,(A,[A]_\CJ)}&:=s_{\af,A}, 
\end{align*}
where $C,D\in\CB_{reg}$ are such that $A\cup C=B\cup D$ and $A \cap C =B \cap D =\emptyset$. 

We first prove that the projection $P_{(A,[B]_\CJ)}$ does not depend on the choice of $B$, $C$, and $D$. Supposet that $[B]_\CJ=[B']_\CJ$, $A\cup C=B\cup D$ with  $A\cap C=B\cap D =\emptyset$, and $A\cup C'=B'\cup D'$ with $A\cap C'=B'\cap D' =\emptyset$. To ease notion we let 
\[
q_A:=p_A-\sum_{\alpha\in\Delta_A}s_{\alpha,\theta_\alpha(A)}s_{\alpha,\theta_\alpha(A)}^*
\] 
when $A\in\CB_{reg}$. Then $q_A=0$ if $A\in\CJ$. In particular, if $A\subseteq B\setminus B'$ or $A\subseteq B'\setminus B$, then $q_A=0$. Thus,  
\[
q_{C'\setminus (C\cup D')}=q_{C\cap C'\cap D'\setminus D}=q_{D'\setminus (C'\cup D)}=0
\] 
since $C'\setminus (C\cup D') \subseteq B' \setminus B$, $C\cap C'\cap D'\setminus D \subseteq B \setminus B'$ and $D'\setminus (C'\cup D) \subseteq B' \setminus B$. We also have that $q_{D\cap C'\cap D'\setminus C}=0$ because $D\cap C'\cap D'\setminus C\subseteq A\cap C'=\emptyset$. It  then follows that 
\begin{align*}
q_{C'\setminus C}&=q_{C'\setminus (C\cup D')}+q_{C'\cap D'\setminus C}=q_{C'\cap D'\setminus C}=q_{D\cap C'\cap D'\setminus C}+q_{C'\cap D'\setminus (C\cup D)} \\
&=q_{C'\cap D'\setminus (C\cup D)}=q_{C'\cap D'\setminus (C\cup D)}+q_{C\cap C'\cap D'\setminus D}=q_{C'\cap D'\setminus D} \\
&=q_{C'\cap D'\setminus D}+q_{D'\setminus (C'\cup D)}=q_{D'\setminus D}. 
\end{align*}
Thus, $q_C+q_{D\cup D'}=q_C+q_D+q_{D'\setminus D}=q_C+q_D+q_{C'\setminus C}=q_{C\cup C'}+q_D$ from which we have that 
\[
p_A+q_C-q_D=p_A+q_{C\cup C'}-q_{D\cup D'}=p_A+q_{C'}-q_{D'}.
\]
So, $P_{(A,[B]_\CJ)}=P_{(A,[B']_\CJ)}$. 

\vskip 0.5pc

The following calculations show that the family 
$\{P_{(A,[B]_\CJ)}, S_{\af,(A,[A]_\CJ)}\}$ is a Cuntz--Krieger representation of $(\widetilde{\CB},\CL,\widetilde{\theta},\widetilde{\CI}_\alpha)$:

(i): Clearly, $P_{(\emptyset, \emptyset)}=0$. Given $(A_1,[B_1]_\CJ)$ and $(A_2, [B_2]_\CJ)$ in $\widetilde{\CB}$, we can choose $C_1,D_1,C_2,D_2 \in \CB_{reg}$ such that 
\begin{align*} & A_1\cup C_1=B_1\cup D_1 ~\text{with}~A_1 \cap  C_1=B_1\cap D_1=\emptyset, \\
& A_2\cup C_2=B_2\cup D_2 ~\text{ with}~ A_2\cap C_2=B_2\cap D_2 =\emptyset,
\end{align*}
and
\begin{align*} 
&(A_1 \cap A_2) \cup \{ (A_1\cap C_2) \cup (D_1 \cap D_2) \cup (C_1 \cap A_2)\cup (C_1 \cap C_2)\} \\
& =(B_1 \cap B_2) \cup \{ (C_1 \cap D_2) \cup (D_1 \cap A_2) \cup (D_1\cap C_2) \cup (A_1\cap D_2)\},
\end{align*}
with
\begin{align*} 
&(A_1 \cap A_2) \cap \{ (A_1\cap C_2) \cup (D_1 \cap D_2) \cup (C_1 \cap A_2)\cup (C_1 \cap C_2)\} = \emptyset ,\\
& (B_1 \cap B_2) \cap \{ (C_1 \cap D_2) \cup (D_1 \cap A_2) \cup (D_1\cap C_2) \cup (A_1\cap D_2)\} =\emptyset.
\end{align*}
Using the fact that $p_Aq_B=q_{A \cap B}$ for all $A \in \CB$ and $B\in \CB_{reg}$, it easily follows that  
\begin{align*} 
&P_{(A_1,[B_1]_\CJ)}P_{(A_2, [B_2]_\CJ)}\\
&=\big(p_{A_1}+q_{C_1}-q_{D_1}\big) \big(p_{A_2}+q_{C_2}-q_{D_2}\big)\\
&= p_{A_1 \cap A_2} 
+q_{A_1\cap C_2} +q_{D_1 \cap D_2}+q_{C_1 \cap A_2}+q_{C_1 \cap C_2} \\
&\hskip 4pc-q_{C_1 \cap D_2}-q_{D_1 \cap A_2}-q_{D_1\cap C_2} -q_{A_1\cap D_2} \\
&=P_{(A_1 \cap A_2, [B_1 \cap B_2]_\CJ)}.
\end{align*}
The last equality holds true since one can choose a finite family $\{B_i\}_{i=1}^n$ of mutually disjoint elements in $\CB_{reg}$ so that 
\[
(A_1\cap C_2) \cup (D_1 \cap D_2) \cup (C_1 \cap A_2)\cup (C_1 \cap C_2) = \cup_{i=1}^n B_i.
\]

Let $X_1=A_1\cup C_1=B_1\cup D_1$ and $X_2=A_2\cup C_2=B_2\cup D_2$. We then have that $(C_1\cap C_2)\cup (C_1\setminus X_2)\cup (C_2\setminus X_1), (D_1\cap D_2)\cup (D_1\setminus X_2)\cup (D_2\setminus X_1)\in\CB_{reg}$, 
\begin{align*}
&(A_1\cup A_2)\cap ((C_1\cap C_2)\cup (C_1\setminus X_2)\cup (C_2\setminus X_1))\\&=(B_1\cup B_2)\cap ((D_1\cap D_2)\cup (D_1\setminus X_2)\cup (D_2\setminus X_1))=\emptyset
\end{align*} 
and that 
\begin{align*}
&X_1\cup X_2\\ &=A_1\cup A_2\cup\big( (C_1\cap C_2)\cup (C_1\setminus X_2)\cup (C_2\setminus X_1) \big)\\&=B_1\cup B_2\cup \big( (D_1\cap D_2)\cup (D_1\setminus X_2)\cup (D_2\setminus X_1) \big).
\end{align*} It then follows that
\begin{align*}
&P_{(A_1\cup A_2,[B_1\cup B_2]_\CJ)}\\
&=p_{A_1\cup A_2}+q_{(C_1\cap C_2)\cup (C_1\setminus X_2)\cup (C_2\setminus X_1)}-q_{(D_1\cap D_2)\cup (D_1\setminus X_2)\cup (D_2\setminus X_1)}.
\end{align*}
Since $C_1 \setminus (A_2 \cup C_2)= C_1 \setminus \big((C_1 \cap A_2) \cup (C_1 \cap C_2)\big)$, it follows that
\begin{align*}
q_{C_1\setminus X_2}&=q_{C_1}-q_{C_1\cap A_2}-q_{C_1\cap C_2}.
\end{align*}
Similarly, we have 
\begin{align*}
q_{C_2\setminus X_1}&=q_{C_2}-q_{C_2\cap A_1}-q_{C_2\cap C_1},\\
q_{D_1\setminus X_2}&=q_{D_1}-q_{D_1\cap A_2}-q_{D_1\cap C_2},\\
q_{D_2\setminus X_1}&=q_{D_2}-q_{D_2\cap A_1}-q_{D_2\cap C_1}.
\end{align*}
We thus have that
\begin{align*}
&P_{(A_1,[B_1]_\CJ)}+P_{(A_2,[B_2]_\CJ)}-P_{(A_1\cap A_2,[B_1\cap B_2]_\CJ)}\\
&=(p_{A_1}+q_{C_1}-q_{D_1})+(p_{A_2}+q_{C_2}-q_{D_2}) \\
&  \hskip 1pc  -p_{A_1\cap A_2}-q_{C_1\cap A_2}-q_{C_2\cap A_1}-q_{C_1\cap C_2}-q_{D_1\cap D_2}\\
&  \hskip 5pc+q_{D_1\cap A_2}+q_{D_1\cap C_2}+q_{D_2\cap A_1}+q_{D_2\cap C_1}\\
&=(p_{A_1}+p_{A_2}-p_{A_1\cap A_2})\\
&  \hskip 1pc +q_{C_1\cap C_2}
+(q_{C_1}-q_{C_1\cap A_2}-q_{C_1\cap C_2})
+(q_{C_2}-q_{C_2\cap A_1}-q_{C_2\cap C_1}) \\
&  \hskip 1pc 
-q_{D_1\cap D_2}
-(q_{D_1}-q_{D_1\cap A_2}-q_{D_1\cap C_2})
-(q_{D_2}-q_{D_2\cap A_1}-q_{D_2\cap C_1})\\
&=p_{A_1\cup A_2}+q_{(C_1\cap C_2)\cup (C_1\setminus X_2)\cup (C_2\setminus X_1)}
-q_{(D_1\cap D_2)\cup (D_1\setminus X_2)\cup (D_2\setminus X_1)}\\
&=P_{(A_1\cup A_2,[B_1\cup B_2]_\CJ)}.
\end{align*}

(ii): Using $q_As_{\af,A'}=s_{\af, A' \cap \theta_\af(A)}-s_{\af, A' \cap \theta_\af(A)}=0$ for all $A\in \CB_{reg}$ and $A' \in \CI_\af$, we see that  
\begin{align*}
P_{(A, [B]_\CJ)}S_{\af, (A',[A']_\CJ)} &= (p_A+q_C-q_D)s_{\af,A'} \\
&=s_{\af,A'}p_{\theta_\af(A)}  \\
&=S_{\af, (A',[A']_\CJ)}P_{\theta_\af(A, [B]_\CJ)}.
\end{align*}

(iii): $S_{\af,(A, [A]_\CJ)}^*S_{\af',(A', [A']_\CJ)}^*=s_{\af, A}^*s_{\af',A'}=\delta_{\af,\af'}p_{A \cap A'}=\delta_{\af,\af'}P_{(A \cap A',[A \cap A']_\CJ)}$.

\vskip 0.5pc

(iv): For $(A, \emptyset) \in \widetilde{\CB}_{reg}$, we have
\begin{align*}
P_{(A, \emptyset)}=p_A &=\sum_{\af \in \Delta_{(A, \emptyset)}}s_{\af,\theta_\af(A)}s_{\af,\theta_\af(A)}^* \\&=\sum_{\af \in \Delta_{(A, \emptyset)}}S_{\af,(\theta_\af(A), [\theta_\af(A)]_\CJ)}S_{\af,(\theta_\af(A), [\theta_\af(A)]_\CJ)}^*.
\end{align*}

\noindent
Thus we have a $*$-homomorphsim 
\[
\rho : C^*(\widetilde{\CB},\CL,\widetilde{\theta},\widetilde{\CI}_\alpha) \to C^*(\CB,\CL,\theta,\CI_\af;\CJ)
\]
such that
\[
\rho(p_{(A,[B]_\CJ)})=p_A+q_C-q_D ~\text{and}~ \rho(s_{\af,(A,[A]_\CJ)})=s_{\af,A}
\]
for $(A,[B]_\CJ) \in \widetilde{\CB}$, $\af \in \CL$ and $ (A,[A]_\CJ) \in \widetilde{\CI}_\af$.

It is then easy to check that
\begin{align*} 
\rho \circ \phi (p_A) &= \rho (p_{(A,[A]_\CJ)}) =p_A, \\
\rho \circ \phi (s_{\af,B}) &=  \rho(s_{\af,(B,[B]_\CJ)})=s_{\af,B}
\end{align*}
for all  $A \in \CB$, $\af \in \CL$ and $ B \in \CI_\af$. Hence, $\rho \circ \phi =id$. Also, we see  that 
\begin{align*}
& \phi \circ \rho (p_{(A,[B]_\CJ)}) \\
&= \phi(p_A+q_C-q_D) \\
&=p_{(A,[A]_\CJ)}+p_{(C,[C]_\CJ)} -\sum_{\af \in \Delta_C}s_{\af, (\theta_\af(C), [\theta_\af(C)]_\CJ)}s_{\af, (\theta_\af(C), [\theta_\af(C)]_\CJ)}^* \\
& \hskip 4pc -p_{(D,[D]_\CJ)} +\sum_{\af \in \Delta_D}s_{\af, (\theta_\af(D), [\theta_\af(D)]_\CJ)}s_{\af, (\theta_\af(D), [\theta_\af(D)]_\CJ)}^* \\
&=p_{(A,[A]_\CJ)} \\
&\hskip 1pc+p_{(\emptyset, [C]_\CJ)}+p_{(C,\emptyset)} -\sum_{\af \in \Delta_{(C,\emptyset)}}s_{\af, (\theta_\af(C), [\theta_\af(C)]_\CJ)}s_{\af, (\theta_\af(C), [\theta_\af(C)]_\CJ)}^* \\
& \hskip 1pc -p_{(\emptyset,[D]_\CJ)}-p_{(D,\emptyset)} +\sum_{\af \in \Delta_{(D,\emptyset)}}s_{\af, (\theta_\af(D), [\theta_\af(D)]_\CJ)}s_{\af, (\theta_\af(D), [\theta_\af(D)]_\CJ)}^* \\
&=p_{(A,[A]_\CJ)}+p_{(\emptyset, [C]_\CJ)} -p_{(\emptyset,[D]_\CJ)}\\
&=p_{(A,[A \cup C]_\CJ)}-p_{(\emptyset,[D]_\CJ)}\\
&=p_{(A,[B]_\CJ)}.
\end{align*}
for  $(A,[B]_\CJ) \in \widetilde{\CB}$. The last equality holds since 
\[p_{(A, [A \cup C]_\CJ)}=p_{(A, [B \cup D]_\CJ)}=p_{(A, [B]_\CJ)}+ p_{(\emptyset, [D]_\CJ)}.
\]
For  $\af \in \CL$ and $ (A,[A]_\CJ) \in \widetilde{\CI}_\af$, we have  
\[
\phi \circ \rho (s_{\af,(A,[A]_\CJ)}) = \phi(s_{\af,A}) = s_{\af,(A,[A]_\CJ)}.
\]
Hence, $\phi \circ \rho =\operatorname{id}$.
\end{proof}

\begin{remark} Let $(\CB,\CL,\theta,\CI_\af;\CJ)$ be a relative generalized
Boolean dynamical system. By Proposition \ref{RGBDS isom GBDS},
$C^*(\CB,\CL,\theta,\CI_\alpha;\CJ)$ is isomorphic to
$C^*(\widetilde{\CB},\CL,\widetilde{\theta},\widetilde{\CI}_\alpha)$,
and it follows from Proposition \ref{full hereditary} that
$C^*(\widetilde{\CB},\CL,\widetilde{\theta},\widetilde{\CI}_\alpha)$
is Morita equivalent to $C^*(\widetilde{\CB},\CL,\widetilde{\theta})$.
We thus have that the $C^*$-algebra of any relative generalized
Boolean dynamical system is Morita equivalent to the $C^*$-algebra of
a Boolean dynamical system.

\end{remark}

\section{Gauge-invariant ideals in $C^*(\CB,\CL,\theta, \CI_\af; \CJ)$}
\label{gauge-invariant ideals}
In this section, we give a complete list of the gauge-invariant ideals of $C^*$-algebras of $(\CB,\CL, \theta, \CI_\af;\CJ)$ and describe the quotients as $C^*$-algebras of relative generalized Boolean dynamical systems, thereby generalizing \cite[Proposition 10.11 and Theorem 10.12]{COP} and \cite[Theorem 5.2]{JKP}.

Let $(\CB,\CL,\theta,\CI_\af;\CJ)$ be a relative generalized Boolean dynamical system. Given a hereditary  $\CJ$-saturated ideal $\CH$ of $\CB$, we define
\[
\CB_{\CH}:=\{ A \in \CB :  [A] \in (\CB/\CH)_{reg} \}.	
\]
It is easy to see that  $\CB_\CH$ is an ideal of $\CB$ and $\CH \cup \CB_{reg} \subseteq \CB_\CH$.

Fix a hereditary $\CJ$-saturated ideal $\CH$ of $\CB$ and an ideal $\CS $  of $\CB_\CH$ such that $\CH \cup \CJ \subseteq \CS $. Note that $\CS$  is also an ideal of $\CB$. We let $I_{(\CH,\CS)}$ denote the ideal of $C^*(\CB,\CL,\theta, \CI_\af;\CJ):=C^*(p_A, s_{\af,B})$ generated by the family of projections (where we just write $\Delta_{[A]}$ for $\Delta^{(\CB/\CH,\CL,\theta)}_{[A]}$) \[
\biggl\{p_A-\sum_{\af \in \Delta_{[A]}} s_{\af,\theta_\af(A)}s_{\af,\theta_\af(A)}^*: A \in \CS\biggr\}.	
\]
Note that the family contains the family of projections $\{p_A: A \in \CH\}$ (because if $A\in\CH$, then $\Delta_{[A]}=\emptyset$). 

We put $p_{A, \CH}:=\sum_{\af \in \Delta_{[A]}} s_{\af,\theta_\af(A)}s_{\af,\theta_\af(A)}^*$ throughout this section.

\begin{lem} \label{Lemma:IHS}
The ideal $I_{(\CH,\CS)}$ is gauge-invariant and 
\begin{equation}\label{looks-ideal}
I_{(\CH,\CS)}=\overline{\operatorname{span}}\{s_{\af,B}(p_{A}-p_{A,\CH})s_{\bt,C}^*:A \in \CS,  \af,\bt \in \CL^*, B \in \CI_\af ~\text{and}~ C \in \CI_\bt \}.
\end{equation}
\end{lem}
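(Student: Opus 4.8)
The goal is to prove two things about the ideal $I_{(\CH,\CS)}$: that it is gauge-invariant, and that it admits the explicit spanning description \eqref{looks-ideal}. Let me think about the structure here.

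$I_{(\CH,\CS)}$ is defined as the ideal generated by the projections $p_A - p_{A,\CH}$ for $A \in \CS$, where $p_{A,\CH} = \sum_{\af \in \Delta_{[A]}} s_{\af,\theta_\af(A)}s_{\af,\theta_\af(A)}^*$.

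**Gauge-invariance:** The gauge action $\gamma_z$ fixes $p_A$ and sends $s_{\af,B} \mapsto z s_{\af,B}$. So $\gamma_z(s_{\af,\theta_\af(A)}s_{\af,\theta_\af(A)}^*) = z s_{\af,\theta_\af(A)} \cdot \bar{z} s_{\af,\theta_\af(A)}^* = s_{\af,\theta_\af(A)}s_{\af,\theta_\af(A)}^*$. So each generator $p_A - p_{A,\CH}$ is fixed by the gauge action. An ideal generated by gauge-fixed elements is gauge-invariant. That's straightforward.

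**The spanning description:** This is the meat. Let $I$ denote the RHS of \eqref{looks-ideal}.

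*Step 1: $I \subseteq I_{(\CH,\CS)}$.* Each generator $s_{\af,B}(p_A - p_{A,\CH})s_{\bt,C}^*$ lies in the ideal generated by $p_A - p_{A,\CH}$ (for $A \in \CS$), hence in $I_{(\CH,\CS)}$. Taking closed span, $I \subseteq I_{(\CH,\CS)}$.

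*Step 2: $I$ is a closed two-sided ideal.* This is the key computational step. I need to show that $I$ is closed under left and right multiplication by elements of $C^*(\CB,\CL,\theta,\CI_\af;\CJ) = \overline{\operatorname{span}}\{S_{\mu,D}S_{\nu,E}^*\}$ (by \eqref{eq:2}). So I compute products like $(S_{\mu,D}S_{\nu,E}^*) \cdot s_{\af,B}(p_A - p_{A,\CH})s_{\bt,C}^*$ and show they stay in $I$. This uses Lemma \ref{cal} for the multiplication of $S_{\mu,D}S_{\nu,E}^*$ against $s_{\af,B}(\cdots)$.

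The subtle part: $p_{A,\CH}$ involves $\Delta_{[A]}$, the quotient's index set, which may be strictly smaller than $\Delta_A$ (some $\af$ with $\theta_\af(A) \in \CH$ are dropped). So I need algebraic identities relating how $p_A - p_{A,\CH}$ multiplies. Key lemma-type facts: for $A \in \CS$ and appropriate $B$, I expect $(p_A - p_{A,\CH}) s_{\af, B} = s_{\af, \theta_\af(A) \cap B} - (\text{terms})$, and crucially that $(p_A - p_{A,\CH})$ behaves well when we know $\theta_\af(A) \in \CH \subseteq \CS$.

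Since $I_{(\CH,\CS)}$ is by definition the smallest ideal containing the generators, and $I$ is an ideal (Step 2) containing the generators (take $\af = \bt = \emptyset$, so $S_{\emptyset, A} = p_A$; then $B = C = A$, giving $p_A(p_A - p_{A,\CH})p_A = p_A - p_{A,\CH}$ using $p_A p_{A,\CH} = p_{A,\CH}$), we conclude $I_{(\CH,\CS)} \subseteq I$. Combined with Step 1, equality holds.

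**Anticipated obstacle.** The hard part is Step 2, the ideal-closure computation. Specifically, handling products where $\nu$ (from $S_{\nu,E}^*$) is comparable to $\af$ (from $s_{\af,B}$) produces, via Lemma \ref{cal}, terms that may land on a generator $p_{A'} - p_{A',\CH}$ for a *different* set $A'$. I must verify $A' \in \CS$. The ideal property of $\CS$ (closed under intersection with $\CB$, unions within $\CS$) and the hereditary/saturation properties of $\CH$ are exactly what's needed to keep $A'$ inside $\CS$. The bookkeeping around $p_{A,\CH}$ — showing e.g. $s_{\af,\theta_\af(A)}(p_{\theta_\af(A)} - p_{\theta_\af(A),\CH})$ relates correctly — is where most care is required, likely needing a preliminary identity that $p_A - p_{A,\CH} = \sum_{\af \in \Delta_A \setminus \Delta_{[A]}} s_{\af,\theta_\af(A)}s_{\af,\theta_\af(A)}^* + (\text{genuinely singular part})$ together with the fact that $\theta_\af(A) \in \CH \subseteq \CS$ when $\af \in \Delta_A \setminus \Delta_{[A]}$.
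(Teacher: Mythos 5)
Your proposal takes essentially the same route as the paper's proof: gauge-invariance is immediate because the generating projections are gauge-fixed, the inclusion of the right-hand side of \eqref{looks-ideal} into $I_{(\CH,\CS)}$ is clear, and the substance is that the right-hand side is an ideal (the paper cites Lemma \ref{mul-s*s} for this, which is exactly the computation you propose to carry out via Lemma \ref{cal}) containing each generator $p_A-p_{A,\CH}$ (take $\af=\bt=\emptyset$, $B=C=A$, and use $p_Ap_{A,\CH}=p_{A,\CH}$). The obstacle you single out is precisely what the paper leaves implicit, and your resolution is the right one --- a product such as $s_{\nu,E}^*(p_A-p_{A,\CH})$ either vanishes (when the first letter of $\nu$ lies in $\Delta_{[A]}$) or equals $s_{\nu,F}^*$ with $F\subseteq\theta_\nu(A)\in\CH$ by heredity, hence re-lands in the spanning set with $A'=F\in\CH\subseteq\CS$ --- though your tentative closing identity splitting $p_A-p_{A,\CH}$ over $\Delta_A\setminus\Delta_{[A]}$ is not needed and would not even make sense when $\Delta_A$ is infinite (which can happen for $A\in\CS\subseteq\CB_\CH$).
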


\begin{proof} 
Since $I_{(\CH,\CS)}$ is generated by a set that is gauge-invariant, $I_{(\CH,\CS)}$ is gauge-invariant.

It follows from Lemma~\ref{mul-s*s} that the right-hand side $J$ of \eqref{looks-ideal} is an ideal of $C^*(\CB,\CL,\theta, \CI_\af)$. Since $s_{\af,B}(p_A-p_{A, \CH})s_{\bt,C}^*=s_{\af,B}p_As_{\bt,C}^*$ for all $A \in \CH$, $J$ contains the generators of $I_{(\CH,\CS)}$. Thus $I_{(\CH,\CS)} \subseteq J$. The opposite inclusion is clear. 
\end{proof}

We shall prove in Theorem \ref{isomorphism to quotient} that every gauge-invariant ideals of $C^*(\CB,\CL,\theta, \CI_\af;\CJ)$ is of the form $I_{(\CH,\CS)}$ for a hereditary $\CJ$-saturated ideal $\CH$ and an ideal $\CS$ of $\CB_\CH$ with $\CH \subseteq \CS$ and $\CJ \subseteq \CS $, and show that the quotient of $C^*(\CB,\CL,\theta, \CI_\af ;\CJ)$ by the ideal $I_{(\CH,\CS)}$ fall into the class of  $C^*$-algeras of  relative generalized Boolean dynamical systems. 
To do this, we first observe the following.

\begin{lem}\label{her-sat from ideal} 
Let $I$ be a nonzero ideal in $C^*(\CB,\CL,\theta, \CI_\af ;\CJ)$.
\begin{enumerate}
\item  The set $\CH_I:=\{A \in \CB: p_A \in I\}$
is a hereditary and $\CJ$-saturated ideal of $\CB$.
\item  The set 
\[
\CS_I:=\biggl\{A \in \CB_{\CH_I}: p_A -\sum_{\af \in \Delta_{[A]}}s_{\af,\theta_\af(A)}s_{\af,\theta_\af(A)}^* \in I\biggr\}	
\]
is an ideal of $\CB_{\CH_{I}}$ (and hence an ideal of $\CB$) with $\CH_I \subseteq \CS_I$ and $\CJ \subseteq \CS_I$. 
\end{enumerate} 
\end{lem}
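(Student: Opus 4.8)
The plan is to verify each closure condition directly from the representation relations in Definition~\ref{def:representation of RGBDS}, using throughout that $I$ is a closed two-sided ideal together with the facts in Corollary~\ref{cor:inj} (namely $p_A\ne 0$ for $A\ne\emptyset$ and $p_A=\sum_{\af\in\Delta_A}s_{\af,\theta_\af(A)}s_{\af,\theta_\af(A)}^*$ exactly when $A\in\CJ$).

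For part (1), I would first note $\CH_I$ is an ideal of $\CB$: from $p_{A\cap A'}=p_Ap_{A'}$ one gets that $p_A\in I$ and $B\in\CB$ force $p_{A\cap B}=p_Ap_B\in I$, while $p_A,p_{A'}\in I$ force $p_{A\cup A'}=p_A+p_{A'}-p_{A\cap A'}\in I$; and $p_\emptyset=0\in I$. For heredity the key move is the compression $s_{\af,\theta_\af(A)}^*\,p_A\,s_{\af,\theta_\af(A)}=p_{\theta_\af(A)}$, which follows from $p_As_{\af,\theta_\af(A)}=s_{\af,\theta_\af(A)}$ and Definition~\ref{def:representation of RGBDS}(iii) and is legitimate because $\theta_\af(A)\in\CR_\af\subseteq\CI_\af$; since $p_A\in I$ this puts $p_{\theta_\af(A)}\in I$, i.e. $\theta_\af(A)\in\CH_I$. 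For $\CJ$-saturation, if $A\in\CJ$ then $A\in\CB_{reg}$ so $\Delta_A$ is finite and $p_A=\sum_{\af\in\Delta_A}s_{\af,\theta_\af(A)}s_{\af,\theta_\af(A)}^*$; if each $\theta_\af(A)\in\CH_I$, then each summand equals $s_{\af,\theta_\af(A)}\,p_{\theta_\af(A)}\,s_{\af,\theta_\af(A)}^*\in I$, whence $p_A\in I$ and $A\in\CH_I$.

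For part (2) write $\CH=\CH_I$ and $q_A:=p_A-p_{A,\CH}$ for $A\in\CB_\CH$; this is a finite sum because $[A]$ is regular in $\CB/\CH$, so $|\Delta_{[A]}|=\ld_{[A]}<\infty$. The two inclusions are quick, using $\CH\cup\CB_{reg}\subseteq\CB_\CH$: if $A\in\CH$ then $\Delta_{[A]}=\emptyset$, so $q_A=p_A\in I$; and if $A\in\CJ$ then, since $\Delta_{[A]}\subseteq\Delta_A$, we have $q_A=\sum_{\af\in\Delta_A\setminus\Delta_{[A]}}s_{\af,\theta_\af(A)}s_{\af,\theta_\af(A)}^*$, where every such $\af$ has $\theta_\af(A)\in\CH$, so each summand lies in $I$ by the same sandwich as above. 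Both give membership in $\CS_I$.

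The heart of the argument, and the step I expect to need the most care, is the ideal property of $\CS_I$. I would prove two congruences modulo $I$ valid for $A\in\CB_\CH$: first $p_Bq_A\equiv q_{A\cap B}$ for every $B\in\CB$, and second $q_{A\cup B}\equiv q_A+q_B$ when $A,B\in\CB_\CH$ are disjoint. Granting these, intersection absorption is immediate (if $q_A\in I$ then $p_Bq_A\in I$, hence $q_{A\cap B}\in I$, and $A\cap B\in\CB_\CH$ as $\CB_\CH$ is an ideal); this also yields $A\setminus B\in\CS_I$ since $A\setminus B=A\cap(A\setminus B)$; and the union is then handled by the disjoint decomposition $A\cup B=(A\setminus B)\sqcup B$ together with the second congruence. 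Since intersection absorption works against all of $\CB$, $\CS_I$ is in fact an ideal of $\CB$, and a fortiori of $\CB_\CH$. The congruences themselves are established by expanding $q$, substituting $p_Bs_{\af,\theta_\af(A)}=s_{\af,\theta_\af(A\cap B)}$ and $\theta_\af(A)\cap\theta_\af(B)=\theta_\af(A\cap B)$, and then controlling the error: terms indexed by $\af$ with $\theta_\af(\cdot)\in\CH$ fall into $I$ via the sandwich $s_{\af,\cdot}\,p_{\theta_\af(\cdot)}\,s_{\af,\cdot}^*$, whereas the remaining cross terms vanish outright because $s_{\af,C}s_{\af,D}^*=0$ and $s_{\af,C\cup D}=s_{\af,C}+s_{\af,D}$ whenever $C,D\in\CI_\af$ are disjoint (both consequences of Definition~\ref{def:representation of RGBDS} and the linearity of the partial-isometry assignment). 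No genuinely new idea is required beyond the relations already at hand; the delicacy is purely in the bookkeeping of which index sets $\Delta_{[\cdot]}$ are finite, which summands land in $I$, and which disjointness kills the cross terms.
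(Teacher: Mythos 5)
Your proposal is correct and takes essentially the same route as the paper's own proof: part (1) is argued identically (the $p_Bq_A$-type compression and the sandwich $s_{\af,\cdot}\,p_{\theta_\af(\cdot)}\,s_{\af,\cdot}^*$), and for part (2) the paper likewise multiplies $q_A$ by $p_B$ to get intersection absorption and then handles unions by a disjoint decomposition together with additivity of $B\mapsto s_{\af,B}$ on disjoint sets, the only difference being that the paper splits $A\cup B$ into the three pieces $(A\setminus B)\sqcup(A\cap B)\sqcup(B\setminus A)$ where you use $(A\setminus B)\sqcup B$. If anything, your bookkeeping is slightly more careful: you state the index-set changes $\Delta_{[A]}\rightsquigarrow\Delta_{[A\cap B]}$ as congruences modulo $I$ (with the leftover terms absorbed into $I$ via the sandwich), whereas the paper asserts them as exact equalities, which in general they are not since $\theta_\af(A\cap B)\in\CH_I$ need not mean $\theta_\af(A\cap B)=\emptyset$.
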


\begin{proof}
(1): Suppose $A, B \in \CH_I$. Then $p_{A \cup B}=p_A+p_B-p_{A \cap B} \in I$, so $A \cup B \in \CH_I$.

Suppose then that $ B \in \CH_I$ and $A \in \CB$ with $A \subseteq B$. Then $p_A=p_{A \cap B}=p_Ap_B \in I$, so $A \in \CH_I$. 

This shows that $\CH_I$ is an ideal of $\CB$. To show that $\CH_I$ is hereditary, suppose $A \in \CH_I$. Then $p_As_{\af, \theta_\af(A)}=s_{\af, \theta_\af(A)} \in I$ for all $\af \in \CL$. Thus, $s_{\af,\theta_\af(A)}^*s_{\af,\theta_\af(A)}=p_{\theta_\af(A)} \in I$ for all $\af \in \CL$, that is, $\theta_\af(A) \in \CH_I$ for all $\af \in \CL$. 

Suppose now that $A \in \CJ$ and $\theta_\af(A) \in \CH_I$ for all $\af \in \Delta_{A}$. Then $s_{\af, \theta_\af(A)}= s_{\af, \theta_\af(A)}p_{\theta_\af(A)} \in I$ for all $\af \in \Delta_A$. Thus, $p_A= \sum_{\af \in \Delta_A}s_{\af,\theta_\af(A)}^*s_{\af, \theta_\af(A)} \in I$, which means that $A \in \CH_I$. Hence, the hereditary set $\CH_I$ is $\CJ$-saturated.

\vskip 0.5pc

(2): If $ A \in \CS_I$ and $B \in \CB_{\CH_I} $, then obviously $A \cap B \in \CB_{\CH_I}$, and
\begin{align*}
I \ni &~~ p_B \big(p_A -\sum_{\af \in \Delta_{[A]}}s_{\af,\theta_\af(A)}s_{\af,\theta_\af(A)}^*\big) \\
&=p_{A \cap B}-\sum_{\af \in \Delta_{[A]}}s_{\af,\theta_\af(A \cap B)}s_{\af,\theta_\af(A \cap B)}^* \\
&=p_{A \cap B}-\sum_{\af \in \Delta_{[A \cap B ]}}s_{\af,\theta_\af(A \cap B)}s_{\af,\theta_\af(A \cap B)}^*.
\end{align*} 
The last equality holds true since $p_Bs_{\af,\theta_\af(A)}=s_{\af,\theta_\af(A \cap B)}=0$ for $\af \in \Delta_{[A]}$ with $\theta_\af(A \cap B)=\emptyset$. 
Thus, $A \cap B \in \CS_I$. To show that $\CS_I$ is closed under finite unions, suppose $A, B \in \CS_I$. Then clearly, $A \cup B \in \CB_{\CH_I}$ and we see that 
\begin{align*} 
& p_{A \setminus B}-\sum_{\af \in \Delta_{[A \setminus B]}}s_{\af,\theta_\af(A \setminus B)}s_{\af,\theta_\af(A \setminus B)}^* \in I, \\
&p_{A \cap B}-\sum_{\af \in \Delta_{[A \cap B]}}s_{\af,\theta_\af(A \cap B)}s_{\af,\theta_\af(A \cap B)}^* \in I, \\
 & p_{B \setminus A}-\sum_{\af \in \Delta_{[B \setminus A]}}s_{\af,\theta_\af(B \setminus A)}s_{\af,\theta_\af(B \setminus A)}^* \in I.
\end{align*}
Adding the 3 elements above and using the fact that 
\begin{equation}\label{sum-division}
p_{A, \CH}=\sum_{\af \in \Delta_{[A \setminus B]}} s_{\af,\theta_\af(A \setminus B)}s_{\af,\theta_\af(A \setminus B)}^* +\sum_{\af \in \Delta_{[A \cap B]}} s_{\af,\theta_\af(A \cap B)}s_{\af,\theta_\af(A \cap B)}^*,
\end{equation} 
we see that $p_{A \cup B}-\sum_{\af \in \Delta_{[A \cup B]}}s_{\af,\theta_\af(A \cup B)}s_{\af,\theta_\af(A \cup B)}^* \in I$. Hence, $ A \cup B \in \CS_I$.

Since $\Delta_{[A]}=\emptyset$ for all $A \in \CH_I$, it is rather obvious that  $\CH_I \subseteq \CS_I$. For any $A \in \CJ \setminus \CH_I$, we have $p_A +I = \sum_{\af \in \Delta_{[A]}}s_{\af, \theta_\af(A)}s_{\af, \theta_\af(A)}^* +I$, and hence, $\CJ  \subseteq \CS_I$.
\end{proof}

\begin{prop}\label{isomorphism to quotient} 
Let $(\CB,\CL,\theta,\CI_\af;\CJ)$ be a relative generalized Boolean dynamical system. Suppose that $I$ is an ideal of $C^*(\CB,\CL,\theta, \CI_\af;\CJ)$. 

There is then a surjective $*$-homomorphism
\[
\phi_I:C^*(\CB/\CH_I,\CL,\theta,[\CI_\alpha];[\CS_I])\to
C^*(\CB,\CL,\theta,\CI_\alpha;\CJ)/I
\] 
such that
\[
\phi_I(p_{[A]_{\CH_I}})=p_A+I ~\text{ and}~	
\phi_I(s_{\alpha,[B]_{\CH_I}})=s_{\alpha,B}+I,
\]
where $[\CI_\af]:=\{[A] \in \CB / \CH_I: A \in \CI_\af \}$ and $[\CS_I]:=\{[A] \in \CB/ \CH_I:  A \in \CS_I\}$. Moreover, the following are equivalent.
\begin{enumerate}
\item $I$ is gauge-invariant.
\item The map $\phi_I$ is an isomorphism.
\item $I=I_{(\CH_I,\CS_I)}$.
\end{enumerate}
\end{prop}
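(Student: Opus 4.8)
The plan is to construct $\phi_I$ from the universal property of the quotient system, and then to close the loop of equivalences as $(3)\Rightarrow(1)\Rightarrow(2)\Rightarrow(3)$.

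First I would verify that $\{p_A+I,\ s_{\af,B}+I:A\in\CB,\ \af\in\CL,\ B\in\CI_\af\}$ passes to a Cuntz--Krieger representation of $(\CB/\CH_I,\CL,\theta,[\CI_\af];[\CS_I])$, indexed by the classes $[A]_{\CH_I}$ and $[B]_{\CH_I}$. The role of $\CH_I=\{A\in\CB:p_A\in I\}$ is well-definedness: if $[A]_{\CH_I}=[A']_{\CH_I}$, then $A\cup W=A'\cup W$ for some $W\in\CH_I$ (condition $\mathbf{R2}$), and since $p_W,p_{A\cap W},p_{A'\cap W}\in I$ (as $\CH_I$ is a hereditary ideal, Lemma \ref{her-sat from ideal}(1)), the relation $p_{A\cup W}=p_{A'\cup W}$ collapses modulo $I$ to $p_A+I=p_{A'}+I$; similarly $s_{\af,B}+I$ depends only on $[B]_{\CH_I}$. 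Relations (i)--(iii) of Definition \ref{def:representation of RGBDS} are inherited from those of $\{p_A,s_{\af,B}\}$ together with the Boolean operations on $\CB/\CH_I$, while the crucial Cuntz--Krieger relation (iv) for $[A]\in[\CS_I]$ is exactly the statement $p_A-\sum_{\af\in\Delta_{[A]}}s_{\af,\theta_\af(A)}s_{\af,\theta_\af(A)}^*\in I$, which holds by the definition of $\CS_I$ (Lemma \ref{her-sat from ideal}(2)). The universal property (Theorem \ref{universal property:RGBDS}) then yields $\phi_I$, and since its range contains every $p_A+I$ and $s_{\af,B}+I$ it is automatically surjective.

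Before the equivalences I would record a small dictionary bridging $\CB$ and $\CB/\CH_I$: namely $A\in\CH_I\iff[A]=\emptyset$, $A\in\CB_{\CH_I}\iff[A]\in(\CB/\CH_I)_{reg}$ (by definition of $\CB_{\CH_I}$), and $A\in\CS_I\iff[A]\in[\CS_I]$ (the nontrivial direction uses $\CH_I\subseteq\CS_I$ and that $\CS_I$ is an ideal: if $A'\sim A$ with $A'\in\CS_I$, then $A\subseteq A'\cup W$ with $W\in\CH_I\subseteq\CS_I$, so $A\in\CS_I$). Writing $J:=I_{(\CH_I,\CS_I)}$, I would also note that $J\subseteq I$ always, since each generator $p_A-p_{A,\CH_I}$ with $A\in\CS_I$ lies in $I$, and that $\CH_J=\CH_I$ and $\CS_J=\CS_I$, which follow by comparing the generators of $J$ with the defining conditions of $\CH_J$ and $\CS_J$.

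For the equivalences, $(3)\Rightarrow(1)$ is immediate from Lemma \ref{Lemma:IHS}, where $I_{(\CH_I,\CS_I)}$ is shown gauge-invariant. For $(1)\Rightarrow(2)$ I would apply the gauge-invariant uniqueness theorem (Theorem \ref{GIUT for RGBDS}) to $\phi_I$, viewed as the homomorphism induced by the representation $\{p_A+I,s_{\af,B}+I\}$ in $C^*(\CB,\CL,\theta,\CI_\af;\CJ)/I$: its condition (1) reads $p_A+I\neq0$ for $[A]\neq\emptyset$, i.e. $A\notin\CH_I$; its condition (2) reads $(p_A-\sum_{\af\in\Delta_{[A]}}s_{\af,\theta_\af(A)}s_{\af,\theta_\af(A)}^*)+I\neq0$ for $[A]\in(\CB/\CH_I)_{reg}\setminus[\CS_I]$, which via the dictionary is exactly $A\notin\CS_I$ for $A\in\CB_{\CH_I}$; and its condition (3) is the gauge action, which descends to the quotient precisely because $I$ is gauge-invariant and makes $\phi_I$ equivariant. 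The theorem then gives injectivity, hence $\phi_I$ is an isomorphism. Finally $(2)\Rightarrow(3)$: the implication $(1)\Rightarrow(2)$ applied to $J$ (gauge-invariant by Lemma \ref{Lemma:IHS}) shows $\phi_J$ is an isomorphism onto $C^*(\CB,\CL,\theta,\CI_\af;\CJ)/J$; since $\CH_J=\CH_I$ and $\CS_J=\CS_I$, this $\phi_J$ has the same domain as $\phi_I$, and the natural quotient map $q:C^*(\CB,\CL,\theta,\CI_\af;\CJ)/J\to C^*(\CB,\CL,\theta,\CI_\af;\CJ)/I$ satisfies $q\circ\phi_J=\phi_I$ on generators. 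If $\phi_I$ is an isomorphism, then $q=\phi_I\circ\phi_J^{-1}$ is an isomorphism, forcing $\ker q=I/J=0$, i.e. $I=J=I_{(\CH_I,\CS_I)}$.

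The main obstacle I anticipate is not any single hard estimate but the bookkeeping of the dictionary between $\CB$ and $\CB/\CH_I$, and in particular keeping the two index sets $\Delta_A$ (for $\CB$) and $\Delta_{[A]}$ (for $\CB/\CH_I$) straight, since they genuinely differ; the verification of representation relation (iv) and of condition (2) of the gauge-invariant uniqueness theorem both hinge on using $\Delta_{[A]}$ rather than $\Delta_A$ and on the equivalence $A\in\CS_I\iff[A]\in[\CS_I]$. A secondary point requiring care is confirming that $(\CB/\CH_I,\CL,\theta,[\CI_\af];[\CS_I])$ really is a relative generalized Boolean dynamical system, i.e. that each $[\CI_\af]$ is an ideal of $\CB/\CH_I$ containing $\CR_\af^{(\CB/\CH_I)}$ and that $[\CS_I]$ is an ideal of $(\CB/\CH_I)_{reg}$; these are routine but must be in place for Theorem \ref{universal property:RGBDS} and Theorem \ref{GIUT for RGBDS} to apply.
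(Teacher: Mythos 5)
Your proposal is correct and takes essentially the same route as the paper: the same construction of $\phi_I$ from the universal property of $C^*(\CB/\CH_I,\CL,\theta,[\CI_\alpha];[\CS_I])$, then $(3)\Rightarrow(1)$ via Lemma \ref{Lemma:IHS}, $(1)\Rightarrow(2)$ via Theorem \ref{GIUT for RGBDS}, and $(2)\Rightarrow(3)$ via the factorization $\phi_I=q\circ\phi_J$ through the quotient by $J=I_{(\CH_I,\CS_I)}$. The only cosmetic difference is in $(2)\Rightarrow(3)$: the paper constructs the surjection onto $C^*(\CB,\CL,\theta,\CI_\af;\CJ)/J$ directly with respect to $(\CH_I,\CS_I)$ and uses only its surjectivity together with injectivity of $\phi_I$, whereas you first identify $(\CH_J,\CS_J)=(\CH_I,\CS_I)$ (correctly --- one inclusion from the generators of $J$, the other from $J\subseteq I$) and then invoke $(1)\Rightarrow(2)$ for the gauge-invariant ideal $J$ to obtain $\phi_J^{-1}$; both arguments conclude $I=J$ in the same way.
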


\begin{proof} 
Since $p_A\in I$ and $s_{\alpha,A}=s_{\alpha,A}p_A\in I$ if $A\in \CH_I$, it follows that $\{p_A+I:A\in \CB\}\cup\{s_{\alpha,B}+I:\alpha\in\CL,\ B\in\CI_\alpha\}$ is a  $(\CB/\CH_I,\CL,\theta,[\CI_\alpha];[\CS_I])$-representation. So, the universal property of $C^*(\CB/\CH_I,\CL,\theta,[\CI_\alpha];[\CS_I])$ gives us a surjective $*$-homomorphism
\[
\phi_I:C^*(\CB/\CH_I,\CL,\theta,[\CI_\alpha];[\CS_I])\to
C^*(\CB,\CL,\theta,\CI_\alpha;\CJ)/I
\] 
such that
\[
\phi_I(p_{[A]_{\CH_I}})=p_A+I ~\text{ and}~	
\phi_I(s_{\alpha,[B]_{\CH_I}})=s_{\alpha,B}+I.
\]

That (1)$\implies$(2) follows from Theorem~\ref{GIUT for RGBDS}, and that (3)$\implies$(1) follows from Lemma~\ref{Lemma:IHS}.

(2)$\implies$(3): Since $I_{(\CH_I,\CS_I)}\subseteq I$, there is a surjective $*$-homomorphism $$q:C^*(\CB,\CL,\theta,\CI_\alpha;\CJ)/I_{(\CH_I,\CS_I)}\to C^*(\CB,\CL,\theta,\CI_\alpha;\CJ)/I$$ such that $q(p_A+I_{(\CH_I,\CS_I)})=p_A+I$ and $q(s_{\alpha,[B]_{\CH_{I}}}+I_{(\CH_I,\CS_I)})=s_{\alpha,[B]_{\CH_{I}}}+I$. An argument similar to the one used to construct $\phi_I$ gives us a surjective $*$-homomorphism $$\phi_{I_{(\CH_I,\CS_I)}}:C^*(\CB/\CH_{I},\CL,\theta,[\CI_\alpha];[\CS_{I}])\to C^*(\CB,\CL,\theta,\CI_\alpha;\CJ)/I_{(\CH_I,\CS_I)}$$ such that $\phi_{I_{(\CH_I,\CS_I)}}(p_{[A]_{\CH_{I}}})=p_A+I_{(\CH_I,\CS_I)}$ and $\phi_{I_{(\CH_I,\CS_I)}}(s_{\alpha,[B]_{\CH_{I}}})=s_{\alpha,B}+I_{(\CH_I,\CS_I)}$. We have that $\phi_I=q\circ \phi_{I_{(\CH_I,\CS_I)}}$. It follows that if $\phi_I$ is an isomorphism, then $q$ is an isomorphism and $I=I_{(\CH_I,\CS_I)}$.
\end{proof}

The set of pairs $(\CH,\CS)$, where $\CH$ is a hereditary $\CJ$-saturated ideal of $\CB$ and $\CS$ is an ideal of $\CB_\CH$ with $\CH \cup \CJ\subseteq \CS $ is a lattice with respect to the order relation defined by $(\CH_1,\CS_1)\le (\CH_2,\CS_2)\iff (\CH_1\subseteq \CH_2\land \CS_1\subseteq\CS_2)$. The set of gauge-invariant ideals of $C^*(\CB,\CL,\theta, \CI_\af;\CJ)$ is a lattice with the order given by set inclusion (the meet of $I_1$ and $I_2$ is the ideal $I_1\cap I_2$, and the join of $I_1$ and $I_2$ is the ideal generated by $I_1\cup I_2$).

\begin{thm}\label{gauge invariant ideal:characterization} Let $(\CB,\CL,\theta,\CI_\af;\CJ)$ be a relative generalized Boolean dynamical system. 
Then the map $(\CH,\CS) \mapsto I_{(\CH,\CS)} $ is a lattice isomorphism between the lattice of all pairs $(\CH,\CS)$, where $\CH$ is a hereditary  $\CJ$-saturated ideal of $\CB$ and $\CS$ is an ideal of $\CB_\CH$ with $\CH \cup \CJ\subseteq \CS $,  and the lattice of all  gauge-invariant ideals of $C^*(\CB,\CL,\theta, \CI_\af;\CJ)$.
\end{thm}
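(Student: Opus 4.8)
The plan is to verify that the assignments $F\colon(\CH,\CS)\mapsto I_{(\CH,\CS)}$ and $G\colon I\mapsto(\CH_I,\CS_I)$ are mutually inverse, order-preserving bijections. By Lemma~\ref{Lemma:IHS} the map $F$ lands in the set of gauge-invariant ideals, and by Lemma~\ref{her-sat from ideal} the map $G$ sends a gauge-invariant ideal to an admissible pair. If $I$ is gauge-invariant, then Proposition~\ref{isomorphism to quotient} (the implication (1)$\Rightarrow$(3)) gives $I=I_{(\CH_I,\CS_I)}$, i.e.\ $F\circ G=\operatorname{id}$; in particular $F$ is surjective onto the gauge-invariant ideals (the zero ideal being $I_{(\{\emptyset\},\CJ)}$). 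Everything thus reduces to proving $G\circ F=\operatorname{id}$, namely that $\CH_{I_{(\CH,\CS)}}=\CH$ and $\CS_{I_{(\CH,\CS)}}=\CS$ for every admissible pair.

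The inclusions $\CH\subseteq\CH_{I_{(\CH,\CS)}}$ and $\CS\subseteq\CS_{I_{(\CH,\CS)}}$ are immediate from the generating set of $I_{(\CH,\CS)}$. For the reverse inclusions one is tempted to apply Theorem~\ref{GIUT for RGBDS} to the canonical surjection $\phi\colon C^*(\CB/\CH,\CL,\theta,[\CI_\af];[\CS])\to C^*(\CB,\CL,\theta,\CI_\af;\CJ)/I_{(\CH,\CS)}$ determined by $\phi(p_{[A]_\CH})=p_A+I_{(\CH,\CS)}$ and $\phi(s_{\af,[B]_\CH})=s_{\af,B}+I_{(\CH,\CS)}$; but the hypotheses of that theorem are precisely the equalities we are trying to prove, so the argument would be circular. \emph{Breaking this circularity is the main obstacle.} I would do so by constructing an explicit inverse to $\phi$, in the spirit of Proposition~\ref{RGBDS isom GBDS}. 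Writing $\{p_{[A]_\CH},s_{\af,[B]_\CH}\}$ for the universal Cuntz--Krieger representation of $(\CB/\CH,\CL,\theta,[\CI_\af];[\CS])$, I would check that $\{P_A:=p_{[A]_\CH},\,S_{\af,B}:=s_{\af,[B]_\CH}\}$ is a $(\CB,\CL,\theta,\CI_\af;\CJ)$-representation. Relations (i)--(iii) of Definition~\ref{def:representation of RGBDS} are routine; the delicate point is (iv): for $A\in\CJ\subseteq\CS$ the Cuntz--Krieger relation at $[A]$ holds in the quotient system, and because $s_{\af,[\theta_\af(A)]_\CH}=0$ whenever $\theta_\af(A)\in\CH$, the sum over $\Delta_{[A]}$ coincides with the required sum over $\Delta_A$.

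By the universal property of $C^*(\CB,\CL,\theta,\CI_\af;\CJ)$ this representation induces a $*$-homomorphism $\rho$ with $\rho(p_A)=p_{[A]_\CH}$ and $\rho(s_{\af,B})=s_{\af,[B]_\CH}$. Using the explicit description of $I_{(\CH,\CS)}$ in Lemma~\ref{Lemma:IHS} together with the Cuntz--Krieger relation for $[\CS]$, $\rho$ kills each generator $s_{\af,B}(p_A-p_{A,\CH})s_{\bt,C}^*$ (since $A\in\CS$), so $\rho$ descends to $\overline{\rho}\colon C^*(\CB,\CL,\theta,\CI_\af;\CJ)/I_{(\CH,\CS)}\to C^*(\CB/\CH,\CL,\theta,[\CI_\af];[\CS])$, which is inverse to $\phi$ on generators; hence $\phi$ is an isomorphism. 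Applying Corollary~\ref{cor:inj} inside the quotient system now reads off the two ideals: $p_A\in I_{(\CH,\CS)}$ iff $p_{[A]_\CH}=0$ iff $A\in\CH$, so $\CH_{I_{(\CH,\CS)}}=\CH$; and $p_A-p_{A,\CH}\in I_{(\CH,\CS)}$ iff the Cuntz--Krieger relation holds at $[A]$ iff $[A]_\CH\in[\CS]$ iff $A\in\CS$, so $\CS_{I_{(\CH,\CS)}}=\CS$. Thus $F$ and $G$ are mutually inverse bijections.

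It remains to see that $F$ is an order isomorphism. For monotonicity, suppose $(\CH_1,\CS_1)\le(\CH_2,\CS_2)$; by Lemma~\ref{Lemma:IHS} it suffices to place each generator $s_{\af,B}(p_A-p_{A,\CH_1})s_{\bt,C}^*$ of $I_{(\CH_1,\CS_1)}$ inside $I_{(\CH_2,\CS_2)}$. Here $p_A-p_{A,\CH_2}\in I_{(\CH_2,\CS_2)}$ because $A\in\CS_1\subseteq\CS_2$, while the difference $p_{A,\CH_2}-p_{A,\CH_1}$ is a finite sum of range projections $s_{\af,\theta_\af(A)}s_{\af,\theta_\af(A)}^*$ with $\theta_\af(A)\in\CH_2$, each lying in $I_{(\CH_2,\CS_2)}$ since $p_{\theta_\af(A)}\in I_{(\CH_2,\CS_2)}$; adding these shows $p_A-p_{A,\CH_1}\in I_{(\CH_2,\CS_2)}$. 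For the converse, if $I_{(\CH_1,\CS_1)}\subseteq I_{(\CH_2,\CS_2)}$ then $\CH_1=\CH_{I_{(\CH_1,\CS_1)}}\subseteq\CH_{I_{(\CH_2,\CS_2)}}=\CH_2$ by the evident monotonicity of $I\mapsto\CH_I$, and the same absorption computation, now combined with the isomorphism $\overline{\rho}$ and Corollary~\ref{cor:inj} for the $\CH_2$-quotient, shows $\CS_1\subseteq\CS_2$. A bijection between lattices that preserves and reflects the order is a lattice isomorphism, which completes the proof.
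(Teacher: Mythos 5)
Your proposal is correct, and its skeleton is the paper's: surjectivity via Proposition~\ref{isomorphism to quotient}, the identities $\CH_{I_{(\CH,\CS)}}=\CH$ and $\CS_{I_{(\CH,\CS)}}=\CS$ via the canonical $*$-homomorphism $\rho$ into $C^*(\CB/\CH,\CL,\theta,[\CI_\af];[\CS])$ together with Corollary~\ref{cor:inj}, and order-preservation in both directions via the absorption computation with the projections $s_{\af,\theta_\af(A)}s_{\af,\theta_\af(A)}^*$ for $\theta_\af(A)\in\CH_2$. The genuine divergence is how you break the circularity you correctly identify. The paper does it cheaply: it never shows $\phi_I$ is an isomorphism at this stage, but only uses the containment $I_{(\CH,\CS)}\subseteq\ker\rho$, computes $\CH_{\ker\rho}=\CH$ and $\CS_{\ker\rho}=\CS$ from Corollary~\ref{cor:inj}, and invokes monotonicity of $I\mapsto(\CH_I,\CS_I)$. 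You instead do the extra work of verifying that $\rho$ kills $I_{(\CH,\CS)}$ and descends to an explicit inverse $\overline{\rho}$ of $\phi_I$ (in the spirit of Proposition~\ref{RGBDS isom GBDS}); this is a direct, gauge-invariant-uniqueness-free proof of the implication (3)$\Rightarrow$(2) of Proposition~\ref{isomorphism to quotient}, and it yields the quotient identification $C^*(\CB,\CL,\theta,\CI_\af;\CJ)/I_{(\CH,\CS)}\cong C^*(\CB/\CH,\CL,\theta,[\CI_\af];[\CS])$ for an arbitrary admissible pair as a byproduct. A further small dividend: in the converse order direction, concluding $A\in\CS_2$ requires $A\in\CB_{\CH_2}$ (this is built into the definition of $\CS_I$), a point the paper's bare absorption computation leaves implicit; your appeal to Corollary~\ref{cor:inj} in the $\CH_2$-quotient settles it, since it produces $[A]_{\CH_2}\in[\CS_2]$, and classes in $[\CS_2]$ are regular because $\CS_2\subseteq\CB_{\CH_2}$. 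Both routes rest on the same two pillars (the universal-property map to the quotient system and Corollary~\ref{cor:inj}), so the difference is one of economy versus explicitness rather than of method.
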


\begin{proof} 
It follows from Theorem~\ref{isomorphism to quotient} the map $(\CH,\CS) \mapsto I_{(\CH,\CS)} $ is onto.

To see the map $(\CH,\CS) \mapsto I_{(\CH,\CS)} $ is injective, we show that 
\begin{equation*}
\CH_{I_{(\CH,\CS)}}=\CH ~\text{and}~\CS_{I_{(\CH,\CS)}}=\CS.
\end{equation*}
It is easy to see that $\CH \subseteq \CH_{I_{(\CH,\CS)}}$ and $\CS \subseteq \CS_{I_{(\CH,\CS)}}$. It follows from the universal property of $\{p_A,s_{\alpha,B}:A\in\CB,\ \alpha\in\CL,\ B\in\CI_\alpha\}$ that there is a $*$-homomorphism $\phi:C^*(\CB,\CL,\theta,\CI_\alpha;\CJ)\to C^*(\CB /\CH, \CL,\theta, [\CI_\af];[\CS])$ such that $\phi(p_A)=p_{[A]}$ and $\phi(s_{\alpha,B})=s_{\alpha,[B]}$. Let $I'=\ker(\phi)$. Then $I_{(\CH,\CS)}\subseteq I'$. It follows from Corollary~\ref{cor:inj} that $\CH_{I'}=\CH$ and $\CS_{I'}=\CS$. It follows that $\CH_{I_{(\CH,\CS)}}\subseteq \CH_{I'}=\CH$ and $\CS_{I_{(\CH,\CS)}}\subseteq \CS_{I'}=\CS$.

Next we show that $(\CH,\CS) \mapsto I_{(\CH,\CS)} $ is order preserving. For this suppose that for $i=1,2$, $\CH_i$ is a hereditary $\CJ$-saturated ideal of $\CB$ and $\CS_i$ is an ideal of $\CB_{\CH_i}$ such that $\CH_i\cup\CJ\subseteq \CS_i$, and that $\CH_1\subseteq \CH_2$ and $\CS_1\subseteq\CS_2$. Suppose $A\in\CS_1$. We then have that $\theta_\alpha(A)\in \CH_2$ for $\alpha\in\Delta^{(\CB/\CH_1,\CL,\theta)}_{[A]}\setminus \Delta^{(\CB/\CH_2,\CL,\theta)}_{[A]}$, and thus that $p_A-\sum_{\af \in \Delta^{(\CB/\CH_1,\CL,\theta)}_{[A]}} s_{\af,\theta_\af(A)}s_{\af,\theta_\af(A)}^*\in I_{(\CH_2,\CS_2)}$. It follows that $I_{(\CH_1,\CS_1)}\subseteq I_{(\CH_2,\CS_2)}$.

For the converse suppose $I_1$ and $I_2$ are gauge-invariant ideals of $C^*(\CB,\CL,\theta,\CI_\alpha;\CJ)$ such that $I_1\subseteq I_2$. Then clearly, $\CH_{I_1}=\{A\in\CB:p_A\in I_1\}\subseteq \{A\in\CB:p_A\in I_2\}=\CH_{I_2}$. Suppose $A\in\CS_{I_1}$. Then $\theta_\alpha(A)\in \CH_{I_2}$ for $\alpha\in\Delta^{(\CB/\CH_{I_1},\CL,\theta)}_{[A]}\setminus \Delta^{(\CB/\CH_{I_2},\CL,\theta)}_{[A]}$. It follows that 
\begin{align*}
p_A -&\sum_{\af \in \Delta^{(\CB/\CH_{I_2},\CL,\theta)}_{[A]}}s_{\af,\theta_\af(A)}s_{\af,\theta_\af(A)}^*
=p_A -\sum_{\af \in \Delta^{(\CB/\CH_{I_1},\CL,\theta)}_{[A]}}s_{\af,\theta_\af(A)}s_{\af,\theta_\af(A)}^*\\
& + \sum_{\af \in \Delta^{(\CB/\CH_{I_1},\CL,\theta)}_{[A]}\setminus \Delta^{(\CB/\CH_{I_2},\CL,\theta)}_{[A]}}s_{\af,\theta_\af(A)}s_{\af,\theta_\af(A)}^* \in I_2.
\end{align*}
This shows that $\CS_{I_1}\subseteq\CS_{I_2}$. We thus have that $(\CH,\CS) \mapsto I_{(\CH,\CS)} $ is a lattice isomorphism. 
\end{proof}

\vskip 3pc

\subsection*{Acknowledgements}
This research was partially supported by Basic Science Research Program through the National Research Foundation of Korea(NRF) funded by the Ministry of Education(NRF-2017R1D1A1B03030540).

The authors would like to thank the referee for valuable comments and suggestions.

\vskip 3pc

\end{document}